\documentclass[11pt]{article}
\usepackage{amsmath,amssymb,amsthm,eucal, mathrsfs, mathtools}
\usepackage{xcolor}
\usepackage[all]{xy}

\title{The Cayley transform in complex, real and graded $K$-theory}
\author{Chris Bourne$^{\dag,\,\spadesuit}$, 
Johannes Kellendonk$^{\S}$, 
Adam Rennie$^{\ddag}$
\thanks{email: 
\texttt{chris.bourne@tohoku.ac.jp, 
kellendonk@math.univ-lyon1.fr, renniea@uow.edu.au}}
\\[3pt]
${}^{\dag}$WPI-Advanced Institute for Materials Research (WPI-AIMR), Tohoku University,\\
 Sendai, Japan\\ 
${}^{\spadesuit}$RIKEN iTHEMS, Wako, Saitama, Japan \\
${}^{\S}$Institute Camille Jordan, 
Universit\'{e} Claude Bernard Lyon 1,\\ 
 Villeurbanne, France\\
${}^{\ddag}$School of Mathematics and 
Applied Statistics, University of Wollongong,\\
Wollongong, Australia\\
}

\topmargin=0pt
\advance\topmargin by -\headheight
\advance\topmargin by -\headsep
\textheight=8.9in  
\oddsidemargin=15pt
\evensidemargin=\oddsidemargin
\marginparwidth=0.5in
\textwidth=6.2in  


\makeatletter
\def\section{\@startsection{section}{1}{\z@}{-3.5ex plus -1ex minus
  -.2ex}{2.3ex plus .2ex}{\large\bf}}
\def\subsection{\@startsection{subsection}{2}{\z@}{-3.25ex plus -1ex
  minus -.2ex}{1.5ex plus .2ex}{\normalsize\bf}}
\makeatother

\numberwithin{equation}{section} 

\theoremstyle{plain} 
\newtheorem{thm}{Theorem}[section]
\newtheorem{lemma}[thm]{Lemma}
\newtheorem{prop}[thm]{Proposition}
\newtheorem{cor}[thm]{Corollary}

\theoremstyle{definition} 
\newtheorem{defn}[thm]{Definition}

\theoremstyle{remark} 
\newtheorem{rmk}[thm]{Remark}
\newtheorem{example}[thm]{Example}

\DeclareMathOperator{\Cliff}{{\C\ell}} 
\DeclareMathOperator{\Ker}{Ker} 
\DeclareMathOperator{\Dom}{Dom}   
\DeclareMathOperator{\End}{End}   


\newcommand{\A}{\mathcal{A}}  
\newcommand{\C}{\mathbb{C}}   
\newcommand{\Cc}{\mathcal{C}} 
\newcommand{\D}{\mathcal{D}}  
\renewcommand{\H}{\mathcal{H}}  
\newcommand{\K}{\mathcal{K}}  
\newcommand{\N}{\mathbb{N}}   
\newcommand{\op}{\circ}       
\newcommand{\ox}{\otimes}     
\newcommand{\hox}{\hat\otimes} 
\newcommand{\R}{\mathbb{R}}   
\newcommand{\Z}{\mathbb{Z}} 

\newcommand{\stroke}{\mathbin|}     

\def\pairL_#1(#2|#3){{}_{#1}(#2\stroke#3)} 
\def\pairR(#1|#2)_#3{(#1\stroke#2)_{#3}} 
\def\scal<#1|#2>{\langle#1\stroke#2\rangle} 


\newbox\ncintdbox \newbox\ncinttbox 
	\setbox0=\hbox{$-$}
	\setbox2=\hbox{$\displaystyle\int$}
	\setbox\ncintdbox=\hbox{\rlap{\hbox
		to \wd2{\hskip-.125em \box2\relax\hfil}}\box0\kern.1em}
	\setbox0=\hbox{$\vcenter{\hrule width 4pt}$}
	\setbox2=\hbox{$\textstyle\int$}
	\setbox\ncinttbox=\hbox{\rlap{\hbox
		to \wd2{\hskip-.175em \box2\relax\hfil}}\box0\kern.1em}



\renewcommand{\epsilon}{\varepsilon}

\hyphenation{geo-me-try ma-ni-fold ma-ni-folds pro-duct pro-ducts}


\def\calC{\mathcal{C}}

\def\calK{\mathcal{K}}

\def\calA{\mathcal{A}}

\newcommand{\ol}{\overline}

\theoremstyle{definition}

\DeclareMathOperator{\Mult}{Mult}

\newcommand{\Cl}{\mathbb{C}\ell}


\newcommand{\Cd}{\Cc} 
\newcommand{\Ci}{\Cc^{-1}} 
\newcommand{\hot}{\hat\otimes}
\newcommand{\kg}{\rho}

\newcommand{\rs}{{\mathfrak r}}
\newcommand{\osu}{{\rm OSU}}

\definecolor{MyBlue}{cmyk}{1,0.13,0,0.63}
\definecolor{MyGreen}{cmyk}{0.91,0,0.88,0.52}
\newcommand{\mylinkcolor}{MyBlue}
\newcommand{\mycitecolor}{MyGreen}
\newcommand{\myurlcolor}{black}

\usepackage{hyperref}
\hypersetup{%
  bookmarksnumbered=true,bookmarksopen=false,%
  plainpages=false,
  linktocpage=true,%
  colorlinks=true,breaklinks=true,%
  linkcolor=\mylinkcolor,citecolor=\mycitecolor,urlcolor=\myurlcolor,%
  pdfpagelayout=OneColumn,%
  pageanchor=true,%
}


\newcommand{\hf}{\overline{\underline{h}}}
\newcommand{\Adam}{\mathfrak A}
\newcommand{\lot}{\hat\otimes} 


\begin{document}

\maketitle

\vspace{-2pc}

\begin{abstract}
We use the Cayley transform to provide an explicit isomorphism at the level of cycles 
from van Daele $K$-theory to $KK$-theory for graded $C^*$-algebras with a 
real structure. Isomorphisms between $KK$-theory and  complex or real $K$-theory 
for ungraded $C^*$-algebras are a special case of this map. In all cases our map is compatible with the 
 computational techniques required in
physical and geometrical applications, in particular index pairings and Kasparov
products. We provide applications to real $K$-theory and 
topological phases of matter.
\end{abstract}
\maketitle

\parindent=0.0in
\parskip=0.05in

\vspace{-1.6pc}

\section{Introduction}
\label{sec:intro}
This paper presents explicit isomorphisms between $K$-theory 
and (unbounded) $KK$-theory for possibly graded $C^*$-algebras with or without 
a real structure. We use a $K$-theory due to van Daele~\cite{vanDaele1,vanDaele2} to 
accommodate graded real or complex algebras. They key ingredient of our construction 
is the Cayley transform on Hilbert $C^*$-modules, which exchanges unbounded self-adjoint 
regular operators with unitary operators.

The isomorphism $DK(A) \cong KKR(\Cl_{1,0},A)$ 
 is already 
known from work by Roe~\cite{Roe04} and extended 
by Kubota~\cite[Theorem 5.11]{Kubota15a}. 
Roe shows that $DK(A)$ is isomorphic to $KK(\Cl_1,A)$ 
and, for algebras with a real structure $\rs_A$, $DK(A^{\rs_A})$ is isomorphic to 
$KKO(\Cl_1,A^{\rs_A})$, emulating the 
proof given in \cite[Section 17.5]{Blackadar} for standard $K$-theory. 
The resulting isomorphism is, however, not 
given at the level of cycles.

{\em The isomorphism we present is tailored to the needs of 
the applications, especially as they involve index pairings,
and their more sophisticated cousins, Kasparov products.}

Our work relies, ultimately, on results of Wood \cite{Wood}, whose 
proof of Bott periodicity informed van Daele and others. Wood's  proof is 
expressed in terms of odd operators with square $-1$ 
(a trivial difference to the picture of van Daele that uses odd self-adjoint 
unitaries), and Wood's methods were flexible enough to be used widely. 
Our use of the Cayley transform is steeped in long tradition, and in 
particular has been applied to graded $K$-theory by Trout \cite{TroutGraded}. 
Utilising the Cayley transform to
obtain a converse of the functional calculus, Trout obtains a homotopy theoretic
definition of graded $K$-theory.

Our work is motivated by results of the second author (and 
more recently~\cite{AlldridgeMax}), who 
showed that 
homotopy classes of gapped Hamiltonians (with 
prescribed symmetries) are classified directly in 
terms of van Daele $K$-theory, 
\cite{AlldridgeMax, Kellendonk15}. The work in \cite{AlldridgeMax, Kellendonk15}
complements and extends  links of complex and real $K$-theory 
to topological states of matter, \cite{FM13, Kitaev09, Kubota15b,Thiang16}. 

Our isomorphism identifies a van Daele 
element with a concrete unbounded Kasparov cycle, 
which can then be used to take 
pairings/products quite explicitly in terms of cycles. Such
computations are
compatible with the bulk-edge correspondence 
as in \cite{BKR, Kubota15b}. We present
some examples and show how our technique facilitates computations.
The Appendix provides sufficient conditions to be able to compute 
such products explicitly on the level of cycles.

We first give a review of van Daele $K$-theory, $KK$-theory and Kasparov's 
stabilisation theorem 
in Section \ref{sec:prelim}. We use refinements of van Daele $K$-theory 
due to Roe \cite{Roe98,Roe04}, describe the relationship between
these two approaches, and prove an excision isomorphism.

In Section \ref{sec:complex_K_to_KK}, we review the Cayley transform 
for ungraded Hilbert $C^*$-modules, which we use to construct an
explicit isomorphism between odd 
$K$-theory and (unbounded) $KK$-theory for trivially graded complex $C^*$-algebras. 
An isomorphism $KK(\C,A)\to K_0(A)$ is also given using the graph projection 
of the (unbounded) operator of the $KK$-cycle.

In Section \ref{sec:DK_KK}, we introduce a Cayley transform 
on graded $C^*$-modules, where the key difference with the ungraded case is that 
the transform  interchanges a \emph{pair} of odd self-adjoint unitaries with 
an unbounded, odd, self-adjoint and regular operator anti-commuting 
with an odd self-adjoint unitary. We remark that some of our constructions 
are similar to the converse functional calculus used by Trout~\cite{TroutGraded} 
to study graded $K$-theory. 

The graded Cayley transform is used to prove our main result 
in Section \ref{subsec:Cayley_DK_iso}, where 
an explicit isomorphism $DK(A) \to KKR(\Cl_{1,0}, A)$ and inverse is constructed.
While our map from van Daele $K$-theory to $KK$-theory and its inverse are explicit, 
the proof that we obtain isomorphisms is somewhat involved. This is because 
a generic countably generated $C^*$-module  need 
not be full, and so Morita equivalence relates the compact endomorphisms on 
the $C^*$-module to an ideal of the coefficient algebra. 
To prove that our maps give isomorphisms, we demonstrate 
the compatibility of the Cayley transform with Morita equivalence and non-full $C^*$-modules.

Some applications of our isomorphism are considered in Sections \ref{sec:app-real} and \ref{sec:TI_application}. 
In Section \ref{subsec:unitary_KO}, we consider the case that 
$A = B \otimes \Cl_{r,s}$ for some trivially graded
$B$ with real structure $\rs_B$ 
and so the image of our Cayley isomorphism is $KO_{1+s-r}(B^{\rs_B})$ or $K_{1-s-r}(B)$ 
if we ignore the real structure.  
We show how in these special cases we recover 
unitary descriptions of real $K$-theory as studied in~\cite{BL15} 
and~\cite[Section 5.6]{Kellendonk15}.
In Section \ref{subsec:DK_bdry}, we show how our Cayley isomorphism 
interacts with the boundary map in van Daele $K$-theory. We can use this 
result to explicitly write  the boundary map in $KK$-theory, 
$KKR(\Cl_{1,0}, A) \xrightarrow{\delta} KKR(\C, I)$  coming 
from the semi-split short exact sequence
$$
   0 \to I \to E \to A \to 0
$$
such that each map respects the $\Z_2$-grading and real structure. 

Finally in Section \ref{sec:TI_application} we use our Cayley map to write 
down Kasparov modules representing bulk and 
boundary invariants of topological insulators. 
We note that we do not specify our 
algebra of observables and work with a generic complex $C^*$-algebra, possibly 
graded and possibly with a real structure implementing the anti-linear symmetries of the system. 
Of particular note are the boundary invariants, 
where the Kasparov modules we construct 
are explicitly linked to a lift of the Hamiltonian under the bulk-boundary short 
exact sequence. Such lifts are typically related to 
half-space Hamiltonians and 
edge spectra. Hence our work complements recent results~\cite{AlldridgeMax, SBToniolo} 
that express the boundary $K$-theory class using the half-space Hamiltonian.

{\bf Acknowledgements} We would like to thank Alex Kumjian,
David Pask, and Aidan Sims for pointing out some graded issues that were a bit odd. 
We also thank Christopher Max for sharing the results of~\cite{AlldridgeMax} with us, and Matthias Lesch for helpful conversations. 
Finally, Bram Mesland has once again provided timely and useful advice. 
This work was supported by World Premier International Research Center Initiative (WPI), 
MEXT, Japan. AR  was partially supported by the BFS/TFS project Pure Mathematics in Norway and 
CB is supported by a JSPS Grant-in-Aid for Early-Career Scientists (No. 19K14548).
All authors thank the Erwin Schr\"{o}dinger Institute program Bivariant $K$-Theory in Geometry and 
Physics for hospitality and support during the production of this work.


\section{Preliminaries} \label{sec:prelim}

Our paper is concerned with operator $K$-theory, van Daele $K$-theory and
$KK$-theory. The Cayley transform gives us a method to pass between these 
theories.

{\bf Conventions}: We assume that all $C^*$-algebras we encounter have 
a countable approximate identity ($\sigma$-unital). 
Recall that a real $C^*$-algebra is a $C^*$-algebra over the field of real numbers.  
In contrast, a Real $C^*$-algebra, written with large R, is a complex $C^*$-algebra $A$ with a \emph{real structure}, 
that is, an anti-linear multiplicative map $\rs_A:A\to A$ which is of order $2$. Elsewhere, e.g.\ in \cite{Kellendonk16}, 
Real $C^*$-algebras are also called $C^{*,r}$-algebras. 
The subalgebra of elements fixed 
under $\rs_A$ is a real $C^*$-algebra $A^{\rs_A}$. 
In specific situations where the context is clear, 
we will omit the subscript and simply denote a real structure by $\rs$.  
All $C^*$-algebras 
are $\Z_2$-graded (possibly trivially graded) unless otherwise stated, and we use 
$\Z_2$-graded tensor products $\hat\otimes$.

The real Clifford algebra $Cl_{p,q}$ is the algebra generated by $p$ self-adjoint odd elements 
$e_1,\dots,e_p$ with square 1 and $q$ skew-adjoint odd elements $f_1,\dots,f_q$ with square $-1$ 
which all pairwise anti-commute. We denote by 
$\Cl_{p,q}$ the Real $C^*$-algebra generated by $e_1, \ldots, e_p$ and $f_1, \ldots, f_q$.  
That is, its elements are complex linear combinations of products of these generators, equipped with the {real structure} 
$\mathfrak{r}$ such that $e_j^\mathfrak{r} = e_j$, $f_j^\mathfrak{r} = f_j$. 
It is immediate that $\Cl_{p,q} \cong \Cl_{p+q}$ as complex  algebras, and $\Cl_{p,q}^\mathfrak{r} = Cl_{p,q}$.
We will make frequent use of the Pauli matrices, where to establish notation we recall
\begin{align*}
&\sigma_1=\begin{pmatrix}0 & 1\\ 1 & 0 \end{pmatrix}, 
&&\sigma_2=\begin{pmatrix} 0 & -i\\ i & 0\end{pmatrix},
&&\sigma_3=\begin{pmatrix} 1 & 0\\ 0 & -1\end{pmatrix}.
\end{align*}
We will freely take advantage of the isomorphism $\Cl_{1,1} \cong C^*(\sigma_1, -i\sigma_2)$ with real 
structure by entrywise complex conjugation.

\subsection{Van Daele $K$-theory}
\label{subsec:vD-K}

\begin{defn}
Let $A$ be a real or complex $C^*$-algebra. We say that $A$ has a balanced $\Z_2$-grading 
if $A$ contains an odd self-adjoint unitary\footnote{Odd self-adjoint unitaries are called super-symmetries by 
Roe~\cite{Roe98, Roe04}.} 
(OSU). That is, there is an odd element $e$ satisfying $e=e^* = e^{-1}$. 
In particular $A$ is unital. If $A$ has a real structure $\rs_A$, we also require $e^{\rs_A} = e$.
\end{defn}

Let $V(A)=\bigsqcup_k\pi_0(\mbox{\rm OSU}(M_k(A)))$, the disjoint union of homotopy classes of OSUs in $M_k(A)$, 
the $k\times k$ matrices with entries in $A$, $k\geq 1$. 
Here the grading and real structure on $A$ are extended to $M_k(A)$ entrywise. 
The set $V(A)$ is an abelian semigroup with direct sum as operation,
$[x]+[y] = [x\oplus y]$. The Grothendieck group obtained from this semigroup will be denoted 
$GV(A)$. 
The semigroup homomorphism $d:V(A)\to \N$ taking the 
value $k$ on $M_k(A)$ induces a group
homomorphism $d:GV(A)\to\Z$.

\begin{defn} 
\label{defn:vD-K}
If $A$ is unital and has a balanced $\Z_2$-grading, then
the van Daele group of $A$ is $DK(A)=\Ker(d:GV(A)\to\Z)$. 

If $A$ is unital but is not balanced, then we set $DK(A)=\Ker(d:GV(A\hat\otimes \Cl_{1,1})\to\Z)$. 
The complex and real case is given by ignoring the real structure or passing to the real subalgebra 
$A^{\rs_A}\hat\otimes Cl_{1,1}$.

If $A$ is not unital then we set
$DK(A)=\Ker(q_*:DK(A^\sim)\to DK(\C))$ where $q:A^\sim\to\C$ quotients the minimal unitisation $A^\sim$ by the ideal $A$,  
replace $\C$ by $\R$ if $A$ is real, see \eqref{eq-DK-Roe}.

Elements of $DK(A)$ are formal differences of OSUs denoted by $[x]-[y]$. 
\end{defn}
We elaborate on the non-unital case.
Since $\C$ (or $\R$) is trivially graded, the relevant 
exact sequence needed to define $q_*$ is
$$
 0 \to A \hat\otimes \Cl_{1,1} \to A^\sim\hat\otimes \Cl_{1,1} 
 \xrightarrow{q\hat\otimes \mathrm{Id}} \C\otimes \Cl_{1,1}\to 0
$$ 
so that 
$DK(A) = \{[x]-[y]\in DK(A^\sim\hat\otimes \Cl_{1,1}):\,(q\hat\otimes \mathrm{Id})_*[x] = (q\hat\otimes 1)_*[y]\}$. 
Adapting  \cite[Proposition 3.7]{vanDaele1} to Roe's formulation, one can 
find representatives $x'$ for $[x]$ and $y'$ for $[y]$ such that 
$q\hat\otimes \mathrm{Id}(x') = q\hat\otimes \mathrm{Id}(y')$. Thus, for non-unital $A$
\begin{equation}\label{eq-DK-Roe}
DK(A) = \{[x]-[y] :\, x,y\in \mbox{\rm OSU}(M_n(A^\sim\hat\otimes \Cl_{1,1})),\ \ 
x-y\in M_n(A\hat\otimes \Cl_{1,1})\}.
\end{equation}

This is Roe's version of van Daele $K$-theory \cite{Roe98, Roe04}. 
As already mentioned, Roe shows that van Daele's 
$K$-groups are isomorphic to $KK$-groups from which 
we infer that they share all the standard properties of 
$K$-theory, though often we can only exploit this 
easily for ungraded algebras.

If $A$ is complex with a real structure $\rs_A$, then we 
sometimes denote the van Daele $K$-theory group by 
$DK(A,\rs_A)$ to emphasise this. Clearly $DK(A, \rs_A) \cong DK(A^{\rs_A})$.

If $A$ is balanced graded, one may ask if we could equivalently use $A\hat\otimes \Cl_{1,1}$ to define $DK(A)$. 
The following lemma shows that such a choice leads to consistent definitions of van Daele $K$-theory.
\begin{lemma}[\cite{Roe98}]\label{lem-Cl} Let $A$ be balanced graded.
The map 
\begin{equation}
[x]-[y]\mapsto [\frac{x+y}2 \lot 1_2 +   \frac{x-y}2 \lot \sigma_3] - [1\lot \sigma_1]=\left[\begin{pmatrix} x & 0\\ 0 & y\end{pmatrix}\right]-\left[\begin{pmatrix} 0 & 1\\ 1 & 0\end{pmatrix}\right]
\label{eq:ubiquitous-isomorphism}
\end{equation}
furnishes an isomorphism between $DK(A)$ and $DK(A\hot \Cl_{1,1})$. 
\end{lemma}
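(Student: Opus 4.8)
The plan is to verify that the map \eqref{eq:ubiquitous-isomorphism} is a well-defined semigroup homomorphism that descends to the Grothendieck groups, restricts to the kernels of the respective rank maps, and then to exhibit an explicit inverse. First I would check that the formula sends an OSU to an OSU: if $x,y$ are odd self-adjoint unitaries in $M_k(A)$, then $\mathrm{diag}(x,y)$ is an odd self-adjoint unitary in $M_{2k}(A) \subset M_{2k}(A\hot\Cl_{1,1})$ once we recall that the grading on $A\hot\Cl_{1,1}$ combines the grading of $A$ with that of $\Cl_{1,1}$; since $x,y$ are already odd in $A$, the matrix $\mathrm{diag}(x,y)$ viewed inside $M_{2k}(A \hot 1)$ is odd, self-adjoint and unitary, and similarly $1\lot\sigma_1$ is an OSU (here $\sigma_1$ is odd in $\Cl_{1,1}\cong C^*(\sigma_1,-i\sigma_2)$). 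The real structure condition $x^{\rs}=x$, $y^{\rs}=y$ is preserved because $\sigma_1$ is fixed by entrywise conjugation. Thus the right-hand side is a legitimate element of $GV(A\hot\Cl_{1,1})$, and since both summands on the right have equal rank $2k$, the difference lies in $DK(A\hot\Cl_{1,1})=\Ker d$. Well-definedness on homotopy classes is clear since a homotopy of the pair $(x,y)$ induces a homotopy of $\mathrm{diag}(x,y)$; additivity under $\oplus$ is a direct matrix computation (after the standard permutation conjugation identifying $\mathrm{diag}(x\oplus x', y\oplus y')$ with $\mathrm{diag}(x,y)\oplus\mathrm{diag}(x',y')$).

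Next I would construct the inverse. Given an OSU $z$ in $M_n(A\hot\Cl_{1,1})$, decompose it with respect to the $\Cl_{1,1}$-factor. Writing $\Cl_{1,1}\cong M_2(\C)$ with $\sigma_1$, $-i\sigma_2$ the odd generators and $\sigma_3$ the even one, an element of $A\hot\Cl_{1,1}$ becomes a $2\times 2$ matrix over $A$, and the grading identifies the even part of $A\hot\Cl_{1,1}$ with $\mathrm{diag}(a_0,b_0)$ plus off-diagonal odd-in-$A$ entries, etc. The idea (going back to Roe \cite{Roe98} and the stabilisation by $\Cl_{1,1}$) is that any OSU in $M_n(A\hot\Cl_{1,1})$ is homotopic to one of the block form $\mathrm{diag}(x,y)$ with $x,y\in\mathrm{OSU}(M_n(A))$ relative to the distinguished basepoint $1\lot\sigma_1$ — this is exactly the content of Bott-periodicity-type arguments: tensoring by $\Cl_{1,1}$ does not change $DK$. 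Concretely, I would take $z$, subtract a copy of the basepoint (i.e.\ work with $z \oplus (1\lot\sigma_1)$), and use that $z(1\lot\sigma_1) + (1\lot\sigma_1)z$ commutes appropriately to rotate $z$ into block-diagonal form via a path of OSUs; the two diagonal blocks then give the pair $([x],[y])$, and one reads off the element $[x]-[y]\in DK(A)$.

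The main obstacle I anticipate is showing that the inverse is well-defined, i.e.\ that the pair of diagonal blocks extracted from $z$ is independent of the choices made in the homotopy/rotation to block-diagonal form, and that it only depends on the class $[z]-[\text{basepoint}]$ in $DK(A\hot\Cl_{1,1})$ rather than on $z$ itself. This is precisely where one needs the flexibility of van Daele's relations — stability under adding basepoints and under homotopy — together with a uniqueness-up-to-homotopy statement for the block decomposition. I would handle this by invoking the analogue of \cite[Proposition 3.7]{vanDaele1} (already cited in the excerpt) adapted to Roe's formulation: any two OSUs in $M_n(A\hot\Cl_{1,1})$ that agree after applying $q\hot\mathrm{Id}$ (or simply have the same class) can be connected through OSUs after stabilisation, and this connection can be arranged to respect the $\Cl_{1,1}$-decomposition. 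Once well-definedness of both maps is established, checking that the composites are the identity is a routine homotopy argument: one composite sends $[x]-[y]$ to the blocks of $\mathrm{diag}(x,y)$, which are $x$ and $y$ again; the other sends $z$ to $\mathrm{diag}(x,y)$ for the blocks extracted from $z$, which is homotopic to $z$ by construction. I would also remark that the complex and real cases are handled uniformly since every step respects the real structure (all the unitaries implementing the homotopies can be taken $\rs$-invariant, as $\sigma_1$ and $\sigma_3$ are real in our conventions), and the non-unital case follows by naturality applied to $0\to A\to A^\sim\to\C\to 0$.
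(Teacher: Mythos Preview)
Your proposal correctly identifies that the map is well-defined on classes and additive, and you correctly locate the hard part: showing that an arbitrary OSU $z\in M_n(A\hat\otimes\Cl_{1,1})$ can be brought, after stabilisation and homotopy, to block-diagonal form $\mathrm{diag}(x,y)$. But your handling of this step is a gap. The suggestion to ``use that $z(1\hat\otimes\sigma_1)+(1\hat\otimes\sigma_1)z$ commutes appropriately to rotate $z$ into block-diagonal form'' is not an argument---there is no reason an arbitrary OSU should anti-commute (even approximately) with $1\hat\otimes\sigma_1$, and no concrete rotation is specified. Invoking an ``analogue of \cite[Proposition 3.7]{vanDaele1}'' does not help either: that proposition lets you align two OSUs with the same image under a quotient map, which is a different statement from reducing a single OSU to block form.

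The paper avoids this difficulty entirely by passing to van Daele's \emph{base-point} picture. One fixes an OSU $e\in A$ and uses the isomorphism $\alpha_e:DK(A)\to GV_e(A)$. The key step is then the map
\[
\varphi_e:GV_e(A)\to GV_{e\oplus -e}(M_2(A)),\qquad
[x]_e-[y]_e\mapsto\left[\begin{pmatrix} x & 0\\ 0 & -eye\end{pmatrix}\right]_{e\oplus -e},
\]
which is an isomorphism because $e\oplus -e$ is homotopic to its negative (so $V_{e\oplus -e}(M_2(A))$ is already a group, with inverse $[z]\mapsto[-(e\oplus -e)z(e\oplus -e)]$). Next, the explicit algebra isomorphism $\psi_e:A\hat\otimes\Cl_{1,1}\to M_2(A)$ (depending on $e$) identifies $GV_{e\oplus -e}(M_2(A))$ with $GV_{e\hat\otimes 1}(A\hat\otimes\Cl_{1,1})$, and finally $e\hat\otimes 1$ is homotopic to $1\hat\otimes\sigma_1$ along the path $\cos(t)\,e\hat\otimes 1+\sin(t)\,1\hat\otimes\sigma_1$, so one may pass back via $\alpha_{1\hat\otimes\sigma_1}^{-1}$. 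The map of the lemma is precisely $\alpha_{1\hat\otimes\sigma_1}^{-1}\circ\psi_e^{-1}\circ\varphi_e\circ\alpha_e$, a composition of four isomorphisms. The point is that in the base-point picture the ``inverse'' you were looking for is built into the group structure of $V_{e\oplus -e}$, so no ad hoc rotation argument is needed.
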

Roe refers to van Daele \cite{vanDaele1} for the proof. For the convenience of the reader we 
reproduce it below after having introduced van Daele's picture. We also note that for 
balanced graded $A$ the
identification of $M_2(A)$ with entrywise grading with $A\hat\otimes \Cl_{1,1}$ depends on 
a choice of OSU in $A$, cf. Equation \eqref{eq-psi}.

\subsubsection{Base-points, and van Daele's picture}

Van Daele's original definition of the version of $K$-theory which is named after him 
\cite{vanDaele1,vanDaele2} requires a choice of base point. 
This is a choice of OSU $e\in A$ if $A$ is balanced graded, or 
$e\in A\hat\otimes \Cl_{1,1}$ if not. This OSU is then used 
to embed $M_k(A)$ into $M_{k+1}(A)$ 
via $x\mapsto x\oplus e$. The semigroup $V_e(A) =  \bigcup_{k}\pi_0(\mbox{\rm OSU}(M_{k}(A)))$  
is such that the union is no longer disjoint as $\pi_0(\mbox{\rm OSU}(M_{k}(A)))$ 
is  identified with a subset of $\pi_0(\mbox{\rm OSU}(M_{k+1}(A)))$ via the above embedding. 

The semigroup $V_e(A)$ depends on $e$ up to homotopy.
It has a unit element, the class of $e$.
Van Daele's version of the $K$-group is thus given by the Grothendieck
group $DK_e(A) = GV_e(A)$, where we include the chosen base point in our notation.
If we denote for a moment the corresponding homotopy 
classes in $V_e(A)$ by $[x]_e$ 
then $[x] \mapsto [x]_e$ induces a map  $\alpha_e:GV(A)\to GV_e(A)$,
$\alpha_e([x]-[y])= [x]_e - [y]_e$ between the corresponding Grothendieck groups. 
Restricted to the kernel of $d$ the map $\alpha_e$ is a group isomorphism.
We therefore arrive at two (isomorphic) presentations of the van Daele 
$K$-theory group. 
\emph{In the sense that it uses the Grothendieck completion of a semigroup, 
Roe's formulation 
exhibits van Daele $K$-theory 
as a relative theory. In contrast,  van Daele's formulation 
expresses the elements as 
relative to a chosen base point.}

A particularly handy situation arises if the base point $e$ is 
homotopic to $-e$ (along a homotopy of OSUs in $A$). 
Then $V_e(A)$ is a group with inverse given by $-[x]_e = [-exe]_e$. 
If $A$ is balanced this can always be achieved: choose a base point $e\in A$. Then $M_2(A)$ contains the \osu\ $e\oplus -e$ 
which is homotopic to its negative  via the path 
$\begin{pmatrix} e \cos(t) & e \sin( t) \\ e \sin( t) & -e\cos( t) \end{pmatrix}$ for 
$t \in [0,\pi]$. The map 
$\varphi_e:GV_e(A)\to GV_{e \oplus -e}(M_2(A))$ given by 
\begin{equation}
\varphi_e([x]_e -[y]_e) = 
\left[ \begin{pmatrix}x & 0 \\ 0 &  -eye \end{pmatrix} \right]_{e\oplus -e}
\label{eq:varphi}
\end{equation}
is then an isomorphism of groups. 
If $A$ is not balanced but unital, then we start with $A\hat\otimes \Cl_{1,1}$ 
and choose $e=1\lot \sigma_1$ as base point. The van Daele $K$-group of $A$ is thus 
isomorphic to $GV_{\sigma_1 \oplus -\sigma_1}(M_4(A))$ (as it is originally defined in \cite{vanDaele1}).

If $A$ is balanced graded, then $M_2(A)$ (with component-wise extension of the grading) is isomorphic to $A\hat\otimes \Cl_{1,1}$, 
though the isomorphism depends on the choice of base point $e$. Given such an OSU, 
the isomorphism  is given by $\psi_e: A\hat\otimes \Cl_{1,1}\to M_2(A)$ 
\begin{equation}\label{eq-psi}
\psi_e(x\hot 1) = \begin{pmatrix} x & 0 \\ 0 & (-1)^{|x|}exe \end{pmatrix} ,\quad
\psi_e(1\hot \sigma_1) = \begin{pmatrix} 0 & e \\ e & 0 \end{pmatrix} ,\quad
\psi_e(1\hot i\sigma_2) = \begin{pmatrix} 0 & e \\ -e & 0 \end{pmatrix} .
\end{equation}
Note that $\psi_e^{-1}$ maps $e\oplus -e$ to $e\lot 1$ which is homotopic to $1\lot \sigma_1$ 
via $t\mapsto \cos(t)e\hox 1+\sin(t)1\hox \sigma_1$. 
These facts imply that the isomorphism of Lemma~\ref{lem-Cl} is 
given by the composition of four isomorphisms
$
\alpha_{1\hot \sigma_1}^{-1}\circ\psi_e^{-1}\circ \varphi_e\circ \alpha_e$.

\subsubsection{Excision  for van Daele $K$-theory}
Excision for $DK$ can be deduced from excision for ordinary $K$-theory
when the algebra is trivially graded, but seems not to have been 
addressed for graded algebras.

For  a balanced graded algebra $A$ with a (closed two-sided graded) ideal $I$ we 
define the relative van Daele group
$$
DK(A,A/I):=\{[x]-[y]:\,\, x,y\in \mbox{\rm OSU}(M_n(A)),\ \
x-y\in M_n(I)\}.
$$
Here $[\cdot]$ denotes homotopy classes in $\mbox{\rm OSU}(M_n(A))$.
If $A$ is not balanced, but only unital, then we use again $A\hox \Cl_{1,1}$ 
in place of $A$ in the above definition of the relative group; the ideal is then 
$I\hox \Cl_{1,1}$. Again this is reasonable as the map from Lemma~\ref{lem-Cl} provides an isomorphism between 
$DK(A,A/I)$ and $DK(A\hox \Cl_{1,1},A/I\hox \Cl_{1,1})$ in the case that $A$ is balanced. Indeed, 
for any OSU $y\in M_n( A)$, $w = \frac1{\sqrt{2}}(1-y\lot \sigma_1)$ is an even unitary which is homotopic to $1$ and therefore
$\frac{x+y}2 \lot 1_2 +   \frac{x-y}2 \lot \sigma_3$ is homotopic to $w(\frac{x+y}2 \lot 1_2 +   \frac{x-y}2 \lot \sigma_3)w^*$. Finally, a computation shows that 
$w(\frac{x+y}2 \lot 1_2 +   \frac{x-y}2 \lot \sigma_3)w^* - 1\lot\sigma_1\in I\hox \Cl_{1,1}$ provided $x-y\in I$.

Equation \eqref{eq-DK-Roe} can now be interpreted in the way that $DK(A) = DK(A^\sim,A^\sim/A)$ for a non-unital algebra $A$.
\begin{prop}[Excision for $DK$]
\label{prop:excision}
Let $I$ be a (closed two-sided graded) ideal in the unital algebra $A$.
Then $DK(I)\cong DK(A,A/I)$.
\end{prop}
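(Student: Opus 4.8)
The plan is to reduce the graded statement to a form where one can exploit the (already known, by Roe/Kubota) isomorphism $DK \cong KKR(\Cl_{1,0},-)$ and the excision already available for $KK$-theory, but to do so I would rather give a direct $K$-theoretic argument that stays inside the van Daele picture, since that is the level at which the paper wants to work. The reduction step is routine: if $A$ is merely unital but not balanced, replace $A$ by $A\hot\Cl_{1,1}$ and $I$ by $I\hot\Cl_{1,1}$ throughout, using the discussion just before the statement (the map of Lemma~\ref{lem-Cl} identifies $DK(A,A/I)$ with $DK(A\hot\Cl_{1,1}, A/I\hot\Cl_{1,1})$ and likewise $DK(I)$ with $DK(I\hot\Cl_{1,1})$), so we may assume $A$ is balanced graded, with a chosen OSU $e\in A$.

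The core construction is a pair of mutually inverse maps. For the map $DK(I)\to DK(A,A/I)$: an element of $DK(I)$ is, by \eqref{eq-DK-Roe} applied to the non-unital algebra $I$, a formal difference $[x]-[y]$ with $x,y\in\mbox{\rm OSU}(M_n(I^\sim))$ and $x-y\in M_n(I)$. Using the canonical unital inclusion $I^\sim\hookrightarrow A$ (which exists and is $\Z_2$-graded, real, and sends the adjoined unit to $1_A$ since $A$ is unital) and the balanced structure, push $x$ and $y$ forward into $\mbox{\rm OSU}(M_n(A))$; since $x-y\in M_n(I)$ is preserved, this lands in $DK(A,A/I)$. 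Conversely, given $[x]-[y]\in DK(A,A/I)$, i.e. OSUs $x,y\in M_n(A)$ with $x-y\in M_n(I)$, we want to manufacture an element of $DK(I)$; the natural device is to use the base point: replace the pair $(x,y)$ by $(x\oplus e^{\oplus n}, y\oplus e^{\oplus n})$ or rather exploit that $y$ itself is an OSU to form $-\tilde y x \tilde y$-type corrections so that the difference is supported in $I$ while each summand, after a homotopy, lies in $M_{2n}(I^\sim)$. Concretely I would use the isomorphism $\varphi_e$ of \eqref{eq:varphi} and $\psi_e$ of \eqref{eq-psi} to pass to the ``$e\oplus -e$'' picture where $V_{e\oplus -e}$ is already a group, so that $[x]-[y]$ becomes a single class $[z]_{e\oplus -e}$ with $z-(e\oplus -e)$ supported in $I$ up to homotopy; that single OSU $z$, being a unital-plus-$I$ perturbation of the base point, represents a class in $DK(I)$ via \eqref{eq-DK-Roe}.

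The two maps are visibly compatible with direct sums, hence semigroup (and therefore group, after Grothendieck completion) homomorphisms, and they respect $d$, so they descend to the kernel-of-$d$ subgroups that define $DK(I)$ and $DK(A,A/I)$. Checking that they are mutually inverse is where the real work sits, and this is the step I expect to be the main obstacle: one has to show that the round trip $DK(A,A/I)\to DK(I)\to DK(A,A/I)$ changes a representative only by (i) addition of a degenerate/base-point summand and (ii) an honest homotopy of OSUs through $M_N(A)$ with difference staying in $M_N(I)$ — i.e. a homotopy in the \emph{relative} sense. The standard tool is van Daele's Proposition~3.7 (invoked already in the excerpt around \eqref{eq-DK-Roe}), which lets one normalise representatives so that two OSUs agreeing modulo an ideal can be connected by a path agreeing modulo that ideal along the way; one also needs the homotopy $t\mapsto\cos(t)\,e\hox 1+\sin(t)\,1\hox\sigma_1$ and the rotation path $\begin{pmatrix} e\cos t & e\sin t\\ e\sin t & -e\cos t\end{pmatrix}$ from the base-point discussion to absorb the auxiliary summands. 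Granting these normalisation lemmas, both composites are the identity, and combining with the reduction of the first paragraph we obtain $DK(I)\cong DK(A,A/I)$ in general; for non-unital $A$ one then reads \eqref{eq-DK-Roe} as the special case $I=A$, $A^\sim$ in the unital slot, recovering the stated $DK(A)=DK(A^\sim,A^\sim/A)$.
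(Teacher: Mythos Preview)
Your overall architecture is right: define the forward map via the unital inclusion $I^\sim\hookrightarrow A$, then argue surjectivity (or construct an inverse). The paper does exactly this. But the step you flag as ``where the real work sits'' is not merely a matter of normalisation lemmas, and your sketch of the inverse does not actually produce a representative in $I^\sim$.

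Concretely: applying $\varphi_e$ to $[x]_e-[y]_e$ gives the single class $[x\oplus -eye]_{e\oplus-e}$, and $x\oplus(-eye)-(e\oplus -e)=(x-e)\oplus(-e(y-e)e)$, which lies in $M_{2n}(I)$ only when $x-e,\,y-e\in I$, not when merely $x-y\in I$. Saying this holds ``up to homotopy'' is exactly the content of the proposition, so it cannot be assumed. Likewise, van Daele's Proposition~3.7 only lets you adjust representatives so that $q(x')=q(y')$; it does not move $x',y'$ into $I^\sim$, which is what you need. Your ``$-\tilde y x\tilde y$-type corrections'' hint is in the right direction (use $y$ itself, not the fixed $e$), but you never write down the operator that does the job.

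The paper's device is precisely this missing piece: pass to $A\hox\Cl_{1,1}$ via Lemma~\ref{lem-Cl}, so that $[x]-[y]$ becomes $[\tfrac{x+y}{2}\hox 1+\tfrac{x-y}{2}\hox\sigma_3]-[1\hox\sigma_1]$, and then conjugate by the even unitary $w=\tfrac{1}{\sqrt{2}}(1-y\hox\sigma_1)$, which is homotopic to $1$ and satisfies $w(y\hox 1)w^*=1\hox\sigma_1$. One checks directly that $w(\tfrac{x+y}{2}\hox 1+\tfrac{x-y}{2}\hox\sigma_3)w^*-1\hox\sigma_1\in I\hox\Cl_{1,1}$ whenever $x-y\in I$, which lands the representative in $M_n(I^\sim)\hox\Cl_{1,1}$ and gives surjectivity. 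The same conjugation, applied along a homotopy $x(t),y(t)$ with $x(t)-y(t)\in M_n(I)$, yields a homotopy inside $M_n(I^\sim)\hox\Cl_{1,1}$ and hence injectivity. This explicit $w$ (built from the \emph{variable} OSU $y$, not the fixed base point $e$) is the one idea your proposal is missing.
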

\begin{proof} Since $A$ is unital it contains $I^\sim$ and hence any element 
$[x]-[y]\in DK(I^\sim,I^\sim/I)$ can be understood as an element of $DK(A,A/I)$. This defines a
map $DK(I)\to DK(A,A/I)$.

We first show this map is surjective. Since $I^\sim$ need not be balanced we work with $I^\sim\hox \Cl_{1,1}$ 
and consequently also with $A\hox \Cl_{1,1}$. Recall that $ DK(A,A/I)$ is thus generated by elements of the form 
$[\frac{x+y}2 \lot 1_2 +   \frac{x-y}2 \lot \sigma_3] - [1\lot \sigma_1]$ such that $x-y\in M_n(I)$.  
Then with $w = \frac1{\sqrt{2}}(1-y\lot \sigma_1)$,
$[\frac{x+y}2 \lot 1_2 +   \frac{x-y}2 \lot \sigma_3] = [w(\frac{x+y}2 \lot 1_2 +   \frac{x-y}2 \lot \sigma_3)w^*]$ and
$w(\frac{x+y}2 \lot 1_2 +   \frac{x-y}2 \lot \sigma_3)w^* - 1\lot\sigma_1\in I\hox \Cl_{1,1}$. 
Thus $\frac{x+y}2 \lot 1_2 +   \frac{x-y}2 \lot \sigma_3$ has a representative in $M_n(I^\sim)\hox \Cl_{1,1}$. 

For injectivity, we let 
$[\frac{x+y}2 \lot 1_2 +   \frac{x-y}2 \lot \sigma_3] - [\frac{x'+y'}2 \lot 1_2 +   \frac{x'-y'}2 \lot \sigma_3]$ 
be trivial in $ DK(A,A/I)$. There are then (perhaps after stabilisation) paths $x(t)$ and $y(t)$ of 
OSUs in $M_n(A)$ such that $x(0)=x$, $y(0)=y$, $x(1)=x'$, $y(1)=y'$ and, 
for all $t\in [0,1]$, $x(t)-y(t)\in M_n(I)$. We let $w(t) = \frac1{\sqrt{2}}(1-y(t)\lot \sigma_1)$. 
Then $w(t)(\frac{x(t)+y(t)}2 \lot 1_2 +   \frac{x(t)-y(t)}2 \lot \sigma_3)w(t)^*$ is a homotopy in $M_n(I^{\sim})\hox \Cl_{1,1}$ 
between two representatives of 
$[\frac{x+y}2 \lot 1_2 +   \frac{x-y}2 \lot \sigma_3]$ and $[\frac{x'+y'}2 \lot 1_2 +   \frac{x'-y'}2 \lot \sigma_3]$ 
in $M_n(I^{\sim})\hox \Cl_{1,1}$.
\end{proof}

Let us also consider a description of van Daele $K$-theory using 
base points in the case that $A$ is not unital.
We say that $A$ is \emph{weakly balanced graded} if its multiplier algebra contains an  
OSU (the grading on $A$ extends uniquely to the multiplier algebra of $A$).
Importantly $A\hox\Cl_{1,1}$ is always weakly balanced graded.
Having fixed an OSU $e$ in the multiplier algebra, we define $A^{\sim e}$ to be the subalgebra 
of the multiplier algebra generated by $A$ and $e$. We use the notation 
$e_n = e^{\oplus n}$.
\newcommand{\tcr}{\textcolor{red}}
\newcommand{\tcb}{\textcolor{blue}}
\begin{lemma}
\label{lem:unitisation}
Let $A$ be a non-unital and weakly
balanced graded algebra with base point $e$ in the multiplier algebra. Then 
\begin{equation}\label{eq-form-e}
DK_e(A) 
:=  \big\{[x]-[y]\in DK_{e}(A^{\sim e}):\,x- (e_k \oplus -e_{n-k}),\,y-(e_k \oplus -e_{n-k})\in M_{n}(A),\ \mbox{some }n,k\big\}
\end{equation}
is isomorphic to $DK(A)$
(Definition \ref{defn:vD-K}). 
\end{lemma}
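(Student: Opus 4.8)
The plan is to compare the two descriptions of $DK(A)$ — the Roe-style relative one from \eqref{eq-DK-Roe} (after passing to $A\hox\Cl_{1,1}$ so that everything is balanced) and the base-point version \eqref{eq-form-e} — by exhibiting mutually inverse maps. First I would note that $A^{\sim e}$ plays the role of a unitisation adapted to the chosen OSU $e$: it sits between $A^\sim$ (or rather $A^\sim\hox\Cl_{1,1}$) and the multiplier algebra, it contains $A$ as an ideal with $A^{\sim e}/A \cong \C$ (or $\R$), and it is balanced graded since $e\in A^{\sim e}$. So the relative group $DK(A^{\sim e},A^{\sim e}/A)$ makes sense directly, and the first step is to show it equals $DK(A)$. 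This follows from excision (Proposition~\ref{prop:excision}) applied with ideal $A$ inside the unital algebra $A^{\sim e}$, giving $DK(A) \cong DK(A^{\sim e},A^{\sim e}/A)$; one has to check that the definition of $DK(A)$ from Definition~\ref{defn:vD-K} via $A^\sim\hox\Cl_{1,1}$ agrees, which is again excision together with Lemma~\ref{lem-Cl} relating $A^\sim$ and $A^\sim\hox\Cl_{1,1}$ and the fact that replacing one unitisation/base-point-adjunction by another does not change the relative group. The key point here is that any two OSUs in the multiplier algebra of $A\hox\Cl_{1,1}$ can be connected, or at least stably compared, so the resulting relative groups are canonically isomorphic.

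Next I would translate the relative group $DK(A^{\sim e},A^{\sim e}/A)$ into the base-point picture. By definition its elements are formal differences $[x]-[y]$ of OSUs $x,y\in M_n(A^{\sim e})$ with $x-y\in M_n(A)$, modulo homotopy through OSUs in $M_n(A^{\sim e})$. Using the isomorphism $\alpha_{e_n\oplus -e_{n-k}}$ from $GV$ to $GV_{e}$ restricted to $\ker d$ (from the ``Base-points, and van Daele's picture'' subsection), and the translation \eqref{eq:varphi} that makes the base point homotopic to its negative, one passes to classes $[x]-[y]$ relative to the base point $e_k\oplus -e_{n-k}$. The condition $x-y\in M_n(A)$ becomes exactly the condition in \eqref{eq-form-e} once one normalises both $x$ and $y$ to agree with $e_k\oplus -e_{n-k}$ modulo $A$ (the quotient $x\mapsto x + A \in M_n(\C)\hox\Cl_{1,1}$ or $M_n(\R)$ lands in OSUs over a trivially graded algebra, all of which are homotopic to a standard form $e_k\oplus -e_{n-k}$ after stabilisation, so representatives can be chosen with $x-(e_k\oplus -e_{n-k})\in M_n(A)$, using the adaptation of \cite[Proposition 3.7]{vanDaele1} invoked after Definition~\ref{defn:vD-K}). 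Conversely, a class in \eqref{eq-form-e} obviously defines a relative class, and the two assignments are inverse to each other on homotopy classes.

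The main obstacle, and the step deserving the most care, is the bookkeeping around stabilisation and non-fullness: $e$ lives only in the multiplier algebra, so $A^{\sim e}$ is not simply $A^\sim$, and one must check that the semigroup $V_e$ built from $\bigcup_k \pi_0(\osu(M_k(A^{\sim e})))$ with the stabilisation map $x\mapsto x\oplus(e\text{ or }-e)$ is well defined and that $\alpha_e$ remains an isomorphism on $\ker d$ in this relative setting — i.e.\ that adding copies of $e$ and $-e$ does not identify or separate relative classes incorrectly. Concretely, the content is that for OSUs $x,y$ over $A^{\sim e}$ that are equal mod $A$ to $e_k\oplus -e_{n-k}$, a homotopy through OSUs over $A^{\sim e}$ of their images in $DK_e$ can always be deformed to one respecting the ``equal to base point mod $A$'' constraint; this is where one reuses the conjugation trick with $w = \frac{1}{\sqrt{2}}(1 - y\lot\sigma_1)$ exactly as in the proof of Proposition~\ref{prop:excision}, now fibrewise along the homotopy. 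Once this compatibility is in hand, the identification \eqref{eq-form-e} $\cong DK(A)$ is a formal consequence of excision plus the base-point reformulation, and no genuinely new ideas are required.
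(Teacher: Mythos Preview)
Your outline is right---excision gives $DK(A)\cong DK(A^{\sim e},A^{\sim e}/A)$, and then one normalises representatives so that $x$ and $y$ each agree with $e_k\oplus -e_{n-k}$ modulo $A$---and this is exactly the paper's strategy. But you have misidentified the quotient: $A^{\sim e}$ is generated by $A$ and the odd self-adjoint unitary $e$, so $A^{\sim e}/A$ is not $\C$ (trivially graded) but $\Cl_{1,0}$, generated by $\tilde e=q(e)$. This matters, because a trivially graded scalar algebra has no OSUs at all, so your sentence ``the quotient lands in OSUs over a trivially graded algebra, all of which are homotopic to a standard form'' is internally inconsistent. The correct statement is that the only OSUs in $\Cl_{1,0}$ are $\pm\tilde e$, which are \emph{not} homotopic to each other; hence any OSU $q(x)\in M_n(\Cl_{1,0})$ is conjugate, by an even unitary $\tilde w$ homotopic to $1$, to some $\tilde e_k\oplus(-\tilde e)_{n-k}$.

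This also changes the normalisation mechanism. You propose reusing the $w=\tfrac{1}{\sqrt 2}(1-y\hat\otimes\sigma_1)$ trick from Proposition~\ref{prop:excision} fibrewise along a homotopy; the paper instead lifts the conjugating unitary $\tilde w$ from $M_n(\Cl_{1,0})$ to $w\in M_n(A^{\sim e})$ (via $\tilde e\mapsto e$) and replaces $x$ by $wxw^*$, which is homotopic to $x$ and satisfies $q(wxw^*)=\tilde e_k\oplus(-\tilde e)_{n-k}$ exactly. Since $x-y\in M_n(A)$ forces $q(x)=q(y)$, the same $k$ works for $y$, and one obtains a preimage $[wxw^*]-[vyv^*]\in DK_e(A)$. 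This is cleaner than tracking a fibrewise conjugation, and avoids the need to pass through $\Cl_{1,1}$ at all (the point of the weakly balanced hypothesis is precisely that $A^{\sim e}$ is already balanced). So: fix the identification of the quotient to $\Cl_{1,0}$, drop the detour through $A^\sim\hat\otimes\Cl_{1,1}$, and the argument goes through as the paper has it.
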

\begin{proof}
Since $A$ is weakly balanced, $A$ is an ideal in $A^{\sim e}$. Hence $DK(A) \cong DK(A^{\sim e},A^{\sim e}/A)$, 
that is, $DK(A)$ is given by elements $[x]-[y]$, $x,y\in {\rm OSU}(M_n(A^{\sim e}))$ 
with $x-y \in M_n(A)$. Hence $\alpha_e^{-1}$ induces an injective map from $DK_e(A)$ into $DK(A^{\sim e},A^{\sim e}/A)$.

Let $q : A^{\sim e} \to A^{\sim e}/A \cong \Cl_{1,0}$ be the natural projection with $\tilde e = q(e)$ the generator of $\Cl_{1,0}$. The only 
OSUs of $\Cl_{1,0}$ are $\tilde e$ and $-\tilde e$ and they are not homotopic. 
Therefore for a given OSU $x\in M_n(A^{\sim e})$, 
$q(x)$ is homotopic $\tilde e_k\oplus (-\tilde e)_{n-k}$ for some $k$.
Hence there is an even unitary $\tilde w\in M_n(\Cl_{1,0})$, homotopic to $1$ 
along a path of even unitaries, such that $\tilde w q(x) \tilde w^* =  \tilde e_k\oplus (-\tilde e)_{n-k}$. 
We lift $\tilde w$ to a unitary $w\in A^{\sim e}$ (via $\tilde e\mapsto e$). 
Then $w x w^*$ is homotopic to $x$ along a path of OSUs. 
Now let $y$   be an OSU in $M_n(A^{\sim e})$ such that $x-y\in M_n(A)$. 
Then $q(y)$ is homotopic to $\tilde e_k\oplus (-\tilde e)_{n-k}$ with $k$ the same as $q(x)$. 
Similarly, we can find an even unitary $v\in A^{\sim e}$, homotopic to $1$, such that 
$q(vyv^*) =  \tilde e_k\oplus (-\tilde e)_{n-k}$. 
We thus have found $[wxw^*]-[vyv^*] \in DK_e(A)$ which is a preimage of 
$[x]-[y]\in DK(A^{\sim e},A^{\sim e}/A)$. The excision isomorphism of 
Proposition \ref{prop:excision} completes the proof.
\end{proof}

If we study homotopy classes of OSUs where there is a canonical 
or simple choice of base point, then our picture simplifies.
For example, if $A$ is a unital and trivially graded 
algebra, then OSUs of $A \otimes \Cl_{1,1}$ are of the form
$$
U = \begin{pmatrix} 0 & u^* \\ u & 0 \end{pmatrix}
$$ 
where $u\in A$ is unitary. If
we choose $e=1\otimes \sigma_1$ as base point 
then we see that the map $U\mapsto u$ identifies 
$V_e(A \otimes \Cl_{1,1})$ with the homotopy classes 
of unitaries in $(A\otimes {\mathcal K})^\sim$. 
Hence, we recover the group $K_1(A)$ or $KO_1(A^{\rs_A})$.

\subsection{$C^*$-modules and $K$-theory}
\label{subsec:pain}

Given a countably generated right-$A$ $C^*$-module $X_A$ 
we denote by $(\cdot\mid\cdot)_A$ the $A$-valued inner-product, $\End_A(X)$ the adjointable 
endomorphisms of $X_A$ and $\End_A^0(X)\subset \End_A(X)$ the ideal of 
compact endomorphisms, see \cite{Lance}. Any right-$A$ $C^*$-module 
$X_A$ naturally gives rise to an ideal $J_X = \ol{\mathrm{span}(X | X)_A}$ (closure in the norm of $A$). 
The module $X_A$ is called full if $J_X=A$. 
In this section we work with $\Z_2$-graded $C^*$-modules over $\Z_2$-graded 
algebras. The ungraded case is analogous but simpler. 
Endomorphism algebras will always have a $\Z_2$-grading
inherited from acting on a $\Z_2$-graded module. We say that 
$X_A$ is {\em balanced graded}  if $\End_A(X)$ admits an OSU.

Recall that the standard graded Hilbert module over $A$ is given by 
$\hat{\H}_A:=\hat{\H}\hox A$ where $\hat\H$ is the graded infinite dimensional separable 
Hilbert space $\ell^2(\N)\oplus {\ell^2(\N)}^\op\cong \ell^2(\N)\otimes \C^2$ with  grading operator $1\otimes \sigma_3$. 
There is a standard OSU on $\hat{\H}_A\cong (\ell^2(\N)\otimes \C^2\hox A)_A$ given by 
\begin{equation}
Z={\rm Id}_{\ell^2(\N)}\ox\sigma_1\hox 1_{\mathrm{Mult}(A)}
\label{eq:zed}
\end{equation}
The Kasparov stabilisation theorem says that for any countably generated 
$X_A$, $(X \oplus \hat{\H})_A \cong \hat{\H}_A$ 
as graded Hilbert $A$-modules~\cite{KasparovStinespring}.
The compact endomorphisms on $\hat{\H}_A$ are 
$\End_A^0(\hat{\H}_A) \cong \hat\calK \hat\otimes A$ with $\hat \K$ being the graded 
algebra of compact operators on $\hat \H$. If $A$ is balanced with an OSU $e$, then 
we can apply the isomorphism from Equation \eqref{eq-psi} to obtain 
$\hat\calK \hat\otimes A\cong \K \otimes A$, 
where $\K$ is the trivially graded compact operators in which we absorbed a copy of 
$\Cl_{1,1}$.

\begin{lemma}[Morita invariance for $DK$]
\label{lem:DK-morita}
Let $X_A$ be a countably generated $C^*$-module over
the $C^*$-algebra $A$ and define the ideal $J_X=\ol{{\rm span}(X|X)_A}$.
Then $X$ is full as a module over $J_X$ and there is an isomorphism
\begin{equation}
\zeta_X:\,DK(\End^0_A(X)) \xrightarrow{\simeq} DK(J_X).
\label{eq:another-fucking-iso}
\end{equation}
\end{lemma}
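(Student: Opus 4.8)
The plan is to reduce the statement to the Morita equivalence $\End^0_A(X) \sim J_X$ together with the excision/invariance results already established for $DK$. First I would observe that by Kasparov stabilisation, adjoining a copy of the standard module $\hat\H_{J_X}$ to $X$ (viewed as a module over $J_X$, which is legitimate since $X\cdot J_X = X$) gives $X\oplus \hat\H_{J_X}\cong \hat\H_{J_X}$, and correspondingly $\End^0_{J_X}(X)\otimes$-stably sits inside $\hat\K\hat\otimes J_X$. The point is that $\End^0_A(X) = \End^0_{J_X}(X)$ (compact endomorphisms only see the submodule generated by inner products), so it suffices to produce an isomorphism $DK(\hat\K\hat\otimes J_X)\cong DK(J_X)$ that is natural enough to descend to the corner $\End^0_A(X)$.

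The main work is therefore the case $X_A = \hat\H_A$ with $A$ replaced by the (possibly non-unital, possibly non-balanced) algebra $J_X$; equivalently, establishing stability of $DK$ under tensoring with $\hat\K$. Here I would use the picture of $DK$ via OSUs in matrix algebras over the minimal unitisation, as in Equation \eqref{eq-DK-Roe} and Lemma \ref{lem:unitisation}. The inclusion $M_k(J_X^\sim)\hookrightarrow (\hat\K\hat\otimes J_X)^{\sim e}$ (sending the top-left $k\times k$ corner into $\hat\K$, and matching the base point $e_k \oplus -e_{n-k}$ with the standard OSU $Z$ from Equation \eqref{eq:zed}) induces the map; surjectivity and injectivity come from the usual ``absorb finite rank into the standard module'' arguments, using that any OSU in $M_n((\hat\K\hat\otimes J_X)^{\sim e})$ differing from the base point by a compact endomorphism can be rotated, via an even unitary homotopic to the identity, to live in a finite corner. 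This is precisely the stability argument underlying van Daele's and Roe's treatment, and I would cite Lemma \ref{lem-Cl}, Proposition \ref{prop:excision} and Lemma \ref{lem:unitisation} to handle the non-unital and grading bookkeeping (in particular, to pass freely between $\hat\K\hat\otimes J_X$ and $\K\otimes J_X\hat\otimes\Cl_{1,1}$ when convenient).

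The subtlety — and the step I expect to be the main obstacle — is exactly the one the authors flag in the introduction: $X_A$ need not be full over $A$, so the target of the isomorphism is $DK(J_X)$ and not $DK(A)$, and one must be careful that the stabilisation $X\oplus\hat\H$ is taken over $J_X$ rather than over $A$ (over $A$ it would only give $\hat\H_{J_X}$ again, not $\hat\H_A$). Concretely, the endomorphism algebra $\End^0_{J_X}(X\oplus\hat\H_{J_X})\cong \hat\K\hat\otimes J_X$, and $\End^0_A(X)$ is a full corner in it, so Morita invariance of $DK$ for full corners — which itself follows from the excision Proposition \ref{prop:excision} applied to the ideal generated by the corner, exactly as in the trivially graded $C^*$-algebra case — yields $DK(\End^0_A(X))\cong DK(\hat\K\hat\otimes J_X)\cong DK(J_X)$. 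I would define $\zeta_X$ as this composite and, if needed for later sections, record that it is natural in $X$ with respect to isometric inclusions, since that naturality is what makes the Cayley transform compatible with Morita equivalence downstream.
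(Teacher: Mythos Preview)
Your strategy is genuinely different from the paper's, and it has a real gap. The paper does not argue abstractly via ``full corner Morita invariance $+$ stability''; instead it builds $\zeta_X$ explicitly at the level of cycles. Given $[U]-[V]\in DK(\End^0_A(X))$, the paper first perturbs so that $U-V$ is \emph{finite rank} (using the holomorphic functional calculus on $\mathrm{phase}(V+K)$), then fixes a Kasparov stabilisation $W:(X\oplus\hat\H)_{J_X\hox\Cl_{1,1}}\to\hat\H_{J_X\hox\Cl_{1,1}}$ and sets
\[
\zeta_X([U]-[V])=[W_2(\tilde U\oplus Z\oplus V\oplus Z)W_2^*]-[W_2\hat Z W_2^*].
\]
Injectivity and surjectivity are checked by hand with explicit inverse and homotopies. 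This explicit formula is not incidental: it is exactly what is used later in Lemma~\ref{lem:Cay-zeta} (compatibility with the Cayley map) and in the proof of Theorem~\ref{thm:KK_to_DK_iso}. An abstract existence proof of the isomorphism would not be enough for those arguments.

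The gap in your plan is the step ``Morita invariance of $DK$ for full corners follows from the excision Proposition~\ref{prop:excision} applied to the ideal generated by the corner''. Excision relates $DK(I)$ to the relative group $DK(A,A/I)$ for an \emph{ideal} $I\triangleleft A$; a full corner $p(\hat\K\hox J_X)p$ is a hereditary subalgebra, not an ideal, and the ideal it generates is all of $\hat\K\hox J_X$, so there is nothing to excise. In the ungraded case one usually gets full-corner invariance from stability plus Brown's theorem, but for $DK$ in the graded setting stability under $\hat\K$ has not been established prior to this lemma --- indeed, the content of the lemma is essentially to prove that stability/Morita statement by direct construction. So your argument is circular at this point, and even if you supplied a correct proof of stability, you would still owe an explicit description of $\zeta_X$ for downstream use.
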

\begin{proof}
The algebra $\End^0_A(X)^\sim \hat\otimes \Cl_{1,1}$ is 
 balanced graded, and by definition $X$ is a full module over $J_X$.
Recall that the group $DK(\End^0_A(X))$ is made up of differences 
$[U]-[V]$ with $U,\,V$ OSUs over $\End^0_A(X)^\sim \hat\otimes \Cl_{1,1}$
such that $U-V\in \End^0_A(X)\hat\otimes \Cl_{1,1}$.

Let $K$ be an odd self-adjoint 
finite rank endomorphism with $U-V-K$ small enough
in norm so that $V+K$ is invertible. This is possible since
$$
U=V+U-V=V+K+(U-V-K).
$$
Because the finite rank operators are stable under the holomorphic functional calculus~\cite[Lemma 6.3]{KNR},
we can take $\tilde{U}=\mathrm{phase}(V+K)$ such that $\tilde{U}-V$ is finite rank.
The path $[0,1]\ni t\mapsto \mathrm{phase}(V+K+t(U-V-K))$ gives a homotopy from $\tilde{U}$
to $U$, and so every class $[U]-[V]\in DK(\End^0_A(X))$ is represented
by a class $[\tilde{U}]-[V]$ with $\tilde{U}-V$  finite rank. Any two such representatives
are homotopic (in $DK(\End^0_A(X))$) by construction. 
Thus 
the difference
$
\begin{pmatrix} \tilde{U} & 0\\ 0 & V\end{pmatrix}
-\begin{pmatrix} V & 0\\ 0& V\end{pmatrix}
$
is finite rank 
and $\begin{pmatrix} V & 0\\ 0& V\end{pmatrix}$ 
is homotopic to $\begin{pmatrix} 0 & 1\\ 1 & 0\end{pmatrix}$.
Now let $W:(X\oplus\hat{\H})_{J_X\hox\Cl_{1,1}} \to\hat{\H}_{ J_X\hox\Cl_{1,1}}$ be a stabilisation isomorphism, and
$W_2=W\oplus W$ two copies of $W$.
Then for any OSU $Z$ on $\hat{\H}_{J_X\hox\Cl_{1,1}}$,
\[
 \left[ \begin{pmatrix} \tilde{U} & 0 & 0 & 0 \\ 0 & Z & 0 & 0 \\ 0 & 0 & V & 0 \\ 0 & 0 & 0 & Z \end{pmatrix} \right] - 
  \left[\begin{pmatrix} 0 & 0 & 1 & 0\\ 0 & Z & 0&0\\ 1 & 0 & 0 & 0\\ 0 & 0 & 0 & Z\end{pmatrix}\right] 
  =: [\tilde{U}\oplus Z\oplus V\oplus Z]-[\hat{Z}]
\]
defines a class in $DK(\End_{J_X}^0(X\oplus\hat{\H}))$ and 
$(\mathrm{Ad}_W)_\ast :DK(\End_{J_X}^0(X\oplus\hat{\H}))\to DK(\End^0_{{J_X}\hox\Cl_{1,1}}\!(\hat{\H}))$.
Note that $\hat Z$ is homotopic to $V \oplus Z\oplus V\oplus Z$ and the latter differs from 
$\tilde{U}\oplus Z\oplus V\oplus Z$ by a finite rank operator.
Therefore $W_2 \hat Z W_2^*$ is homotopic to $W_2 (V \oplus Z\oplus V\oplus Z) W_2^*$ and the latter differs from 
$W_2(\tilde{U}\oplus Z\oplus V\oplus Z) W_2^*$ by a finite rank operator, i.e. a 
matrix over ${J_X}\hat\otimes \Cl_{1,1}$.
It follows that 
$$
[W_2(\tilde{U}\oplus Z\oplus V\oplus Z)W_2^*]-[W_2\hat{Z}W_2^*]
$$
is a well-defined element in $DK({J_X})$. Thus we have a well-defined and clearly injective map
\begin{align*} \label{eq:explicit_excision}
DK(\End^0_A(X)) \ni [U]-[V] &\mapsto \zeta_X([U]-[V])\\
&= [W_2(\tilde{U}\oplus Z\oplus V\oplus Z)W_2^*]-[W_2\hat{Z}W_2^*]\in DK(M_2({J_X})).
\end{align*}

For surjectivity, suppose that 
$R,\,S\in M_n(J_X^\sim \hox\Cl_{1,1})$ are  OSUs with
$R-S\in M_n({J_X}\hox\Cl_{1,1})$. We consider the corresponding 
class 
$[\tfrac{1}{2}(R+S)\otimes 1 + \tfrac{1}{2}(R-S) \otimes \sigma_3] - [1\hox \sigma_1] = [R\oplus S]-[1\hox \sigma_1]$ 
via the isomorphism \eqref{eq:ubiquitous-isomorphism}. 
As operators on $\hat{\H}_{{J_X} \hox \Cl_{1,1}}$, we note that 
$Z={\rm Id}_{\ell^2(\N)}\ox\sigma_1\hox 1_{\mathrm{Mult}({J_X}) \hox \Cl_{1,1}}$ 
and $\mathrm{Id} \ox 1\hox \sigma_1$ anti-commute and so are homotopic by a 
path of OSUs. Therefore, using the stabilisation map 
$W: (X \oplus \hat{\H})_{ {J_X} \hox \Cl_{1,1}} \to \hat{\H}_{ {J_X} \hox \Cl_{1,1}}$,  we can define a class 
$[W^*(R\oplus S)W |_{X_A}] - [W^* Z W |_{X_A}] \in DK(\End_A^0(X))$, whose representative OSUs
may be homotopied so that the difference is a finite rank operator. 

We then apply $\zeta_X$ and obtain the class
\begin{align*}
&\left[W_2\begin{pmatrix} W^*(R\oplus S)W|_{X_A} & 0&0&0\\ 0&Z&0&0\\ 0 &0 & W^*  ZW |_{X_A}&0\\ 0 & 0 & 0 & Z\end{pmatrix}W_2^*\right]
-[W_2\hat{Z}W_2^*] 
=[R\oplus S]-[1 \ox\sigma_1] \\
&=[R]-[S],
\end{align*}
where the last equality following from an application of the isomorphism 
\eqref{eq:ubiquitous-isomorphism}.
\end{proof}

\begin{cor}
\label{cor:rel-morita}
Let $X_A$ be a balanced graded 
countably generated $C^*$-module over
the $C^*$-algebra $A$ and define the ideal 
${J_X}=\ol{{\rm span}(X|X)_A}$. 
There is an isomorphism (abusively still called $\zeta_X$) 
$\zeta_X:\,DK(\End_A(X),\End_A(X)/\End^0_A(X)) \xrightarrow{\simeq} DK({J_X})$ 
 given by applying excision and then
\begin{equation}
 \zeta_X([U]-[V])= \big[W_2 (\tilde{U}\oplus Z \oplus  V\oplus Z) W_2^{-1}\big]-\big[W_2\hat{Z}W_2^*\big] .
\label{eq:yet-another-fucking-iso}
\end{equation}
\end{cor}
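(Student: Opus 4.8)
The plan is to reduce the corollary to Lemma~\ref{lem:DK-morita} by passing through the excision isomorphism of Proposition~\ref{prop:excision}. Since $X_A$ is balanced graded, $\End_A(X)$ contains an OSU, so $\End^0_A(X)$ is a (closed two-sided graded) ideal in the unital algebra $\End_A(X)$, and Proposition~\ref{prop:excision} gives $DK(\End^0_A(X)) \cong DK(\End_A(X), \End_A(X)/\End^0_A(X))$. Composing this with the isomorphism $\zeta_X : DK(\End^0_A(X)) \xrightarrow{\simeq} DK(J_X)$ from Lemma~\ref{lem:DK-morita} immediately yields an isomorphism $DK(\End_A(X), \End_A(X)/\End^0_A(X)) \xrightarrow{\simeq} DK(J_X)$, which is what we call $\zeta_X$ in the corollary.

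The remaining task is purely to check that this composite has the stated explicit form \eqref{eq:yet-another-fucking-iso}. First I would recall how the excision map of Proposition~\ref{prop:excision} acts: a relative class $[U]-[V]$ with $U, V \in \mathrm{OSU}(M_n(\End_A(X)))$ and $U-V \in M_n(\End^0_A(X))$ is identified, via the proof of that proposition (using the unitary $w = \tfrac{1}{\sqrt 2}(1 - V\lot\sigma_1)$), with a class in $DK(\End^0_A(X))$ represented by OSUs over $(\End^0_A(X))^\sim \hot \Cl_{1,1}$ whose difference lies in $\End^0_A(X) \hot \Cl_{1,1}$. Since balancedness of $\End_A(X)$ lets us absorb the $\Cl_{1,1}$ and present $U-V$ directly as a finite-rank (hence compact) endomorphism after the homotopy argument already used in Lemma~\ref{lem:DK-morita}, the image in $DK(\End^0_A(X))$ is just $[U]-[V]$ read as a van Daele class on the nonunital ideal. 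Then applying the formula for $\zeta_X$ from Lemma~\ref{lem:DK-morita} — replace $U$ by the finite-rank-difference representative $\tilde U$, form the stabilised $4\times 4$ block OSU $\tilde U \oplus Z \oplus V \oplus Z$, conjugate by $W_2$, and subtract $[W_2 \hat Z W_2^*]$ — produces exactly \eqref{eq:yet-another-fucking-iso}. One should note that because $U$ and $V$ are both OSUs over the same unital balanced algebra and differ by a compact, the representative $\tilde U$ with $\tilde U - V$ finite rank exists by the same holomorphic-functional-calculus argument (\cite[Lemma 6.3]{KNR}) as in the lemma, so the formula is well-defined independent of choices.

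I expect the only genuine subtlety — and the step I would be most careful with — is the bookkeeping around the $\Cl_{1,1}$ factors and base points: Proposition~\ref{prop:excision} is stated for balanced graded $A$ without introducing $\Cl_{1,1}$, whereas $\zeta_X$ in Lemma~\ref{lem:DK-morita} is phrased through $(\End^0_A(X))^\sim \hot \Cl_{1,1}$, and one must make sure the two identifications of the relative group are compatible under the map \eqref{eq:ubiquitous-isomorphism} of Lemma~\ref{lem-Cl}. Concretely, I would verify that the representative $[\tfrac12(U+V)\lot 1 + \tfrac12(U-V)\lot\sigma_3] - [1\lot\sigma_1]$, which is the image of $[U]-[V]$ under \eqref{eq:ubiquitous-isomorphism}, is carried by the surjectivity construction in the proof of Lemma~\ref{lem:DK-morita} (with $R = U$, $S = V$) precisely to the class $[W_2(\tilde U \oplus Z \oplus V \oplus Z)W_2^{-1}] - [W_2\hat Z W_2^*]$; this is exactly the computation displayed at the end of that proof, run in reverse, so no new work is needed beyond citing it. Everything else is a formal composition of already-established isomorphisms.
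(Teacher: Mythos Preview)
Your proposal is correct and follows essentially the same route as the paper: apply the excision isomorphism of Proposition~\ref{prop:excision} to pass from the relative group to $DK(\End^0_A(X))$, then compose with the Morita isomorphism $\zeta_X$ of Lemma~\ref{lem:DK-morita}. The paper's proof is in fact just these two sentences, and your additional commentary on the $\Cl_{1,1}$ bookkeeping and the explicit formula is simply a more detailed unpacking of what the paper leaves implicit.
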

\begin{proof}
The excision map $DK(\End_A(X),\End_A(X)/\End^0_A(X))\to DK(\End_A^0(X))$
simply picks representatives of each class which lie in $\End^0_A(X)^\sim\hox\Cl_{1,1}$, as shown in Proposition \ref{prop:excision}. 
We then apply the Morita isomorphism of
Lemma \ref{lem:DK-morita}.
\end{proof}

Finally, we can take all
spaces and unitaries to be Real, and carry through the same 
discussion without issue. We observe that Exel's elegant
Fredholm index proof \cite{Exel} of Equation \eqref{eq:another-fucking-iso}
for complex $K$-theory
would require the development of Fredholm theory in our setting,
and so we have opted for this more direct route.

\subsection{$KK$-theory with real structures}
\label{subsec:KK}

We now briefly review Real Kasparov theory or $KKR$-theory~\cite{Kasp}. 
A complex Hilbert $C^*$-module 
$X_B$ is a Real Hilbert $C^*$-module if 
there is an antilinear map $\mathfrak{r}_X:X_B\to X_B$,
called the real structure, 
such that $(x^{\rs_X})^{\rs_X} = x$,
$x^{\mathfrak{r}_X} \cdot b^{\mathfrak{r}_B} = (x \cdot b)^{\mathfrak{r}_X}$ and 
$( x_1^{\mathfrak{r}_X} \mid x_2^{\mathfrak{r}_X} )_B = \big(( x_1 \mid x_2 )_B\big)^{\mathfrak{r}_B}$. 
The real structure on the $C^*$-module induces 
a real structure $\mathfrak{r}$ on $\End_B(X)$ 
via $S^\mathfrak{r} x = \big( S(x^{\mathfrak{r}_X} ) \big)^{\mathfrak{r}_X}$. 
Representations of Real algebras 
$\pi: A \to \End_B(X)$ should be compatible with this real 
structure, $\pi(a^{\mathfrak{r}_A}) = \pi(a)^\mathfrak{r}$.

 We will often work with unbounded operators on $C^*$-modules, 
 see~\cite[Chapter 9]{Lance}. We  recall 
 that a densely defined closed 
 self-adjoint operator $D:\Dom (D)\subset X_B \to X_{B}$ 
is \emph{regular} if the operator $1+D^2:\Dom (D^2) \to X_{B}$ has dense range.  
We write that $D^\rs = D$ if $(\Dom (D))^{\rs_X} \subset \Dom (D)$ and 
$(Dx^{\rs_X})^{\rs_X} = Dx$ for all $x \in \Dom (D)$.
We also recall the graded commutator, where for endomorphisms
$S,\,T$ with homogenous parity 
$[S,T]_\pm = ST - (-1)^{|S| \,|T|}TS$.

\begin{defn}
Let $A$ and $B$ be $\Z_2$-graded Real $C^*$-algebras.
A Real unbounded Kasparov module $(\calA, {}_\pi{X}_B, D)$ 
consists of
\begin{enumerate}
\item a Real and
$\Z_2$-graded $C^*$-module ${X}_B$, 
\item a Real and graded $*$-homomorphism $\pi:A \to \End_B(X)$, 
\item an
unbounded self-adjoint, regular and odd operator $D=D^\rs$ and a dense $*$-subalgebra 
$\calA\subset A$ such that for all $a\in \calA\subset A$, 
$\pi(a)\Dom(D)\subset\Dom(D)$ and
\begin{align}
  & [D,\pi(a)]_\pm \,\in\, \End_B(X)\;,   &&\pi(a)(1+D^2)^{-1}\,\in\, \End_B^0(X)\;.
  \label{eq:defn}
\end{align}
\end{enumerate}
If both algebras and the module are trivially graded
but the self-adjoint regular operator $D$ still satisfies 
the conditions \eqref{eq:defn}, then we have an
odd Kasparov module. 
\end{defn}

We will often omit the representation $\pi:A\to \End_B(X)$ if the context is clear. 
An unbounded Kasparov modules represent a class 
$[(A, X_B, D(1+D^2)^{-1/2})] \in KKR(A,B)$~\cite{BJ}, 
though we remark that the group $KKR(A,B)$ depends on the 
choice of real structures for $A$ and $B$. 
If $A$ and $B$ are ungraded and $(\calA, X_B, D)$ is an odd unbounded Kasparov 
module, then we can turn it into a graded Kasparov module 
$\big( \calA\hat\otimes \Cl_{0,1}, \, X_B \hat\otimes \C^2, \, D\hat\otimes \sigma_1 \big)$, where 
the generator of the left $\Cl_{0,1}$-action is represented by the matrix $-i\sigma_2 =\begin{pmatrix} 0 & -1 \\ 1 & 0 \end{pmatrix}$. 

If $(\calA, X_B, D)$ is a Real unbounded Kasparov module, then we can 
ignore the real structures and obtain a complex unbounded Kasparov module and 
class in $KK(A, B)$. 
If we restrict the Real $C^*$-module $X_B$ to the 
elements fixed under $\rs_X$, we obtain a real $C^*$-module $X_{B^{\rs_B}}^{\rs_X}$. 
Similarly, the Real left action of $A$ becomes a real left action 
$\pi: A^{\rs_A} \to \End_{B^{\rs_B}}( X^{\rs_X} )$. 
We do not lose any information by restricting Real Kasparov modules to real $C^*$-modules 
and algebras. Similarly, 
real Kasparov modules can be complexified to obtain Real Kasparov modules and 
so $KKR(A, B) \cong KKO(A^{\rs_A}, B^{\rs_B})$. 
If the algebra $B$ is trivially graded, we can also 
consider real $K$-theory, where $KKR(\Cl_{r,s}, B) \cong KKO(Cl_{r,s}, B^{\rs_B}) \cong KO_{r-s}(B^{\rs_B})$.

\begin{rmk}[Normalisation of classes in $KKR(\Cl_{1,0}, A)$] \label{rk:KK(Cl_1,A)_normalisation}
The $KK$ group we  focus on in 
this manuscript is $KKR(\Cl_{1,0}, A)$. 
We review some basic simplifications of 
representatives of $KK$-classes, cf.~\cite[Section 17.4]{Blackadar}.
To emphasise the generator $e$ of the Clifford representation, 
we will denote the 
Kasparov module $(\Cl_{1,0}, X_A, T)$ (bounded or unbounded) 
by $(e, X_A, T)$. 

We can assume without loss of generality that 
$e^2 = 1_X \in \End_A(X)$. 
(If $e^2$ acts as a projection $P$, we can restrict the 
module to $PX_A$, replacing $T$ by $PTP$ and
the remaning part of the Kasparov module will be degenerate.) This therefore 
allows us to assume that  $X_A$ is a balanced graded $C^*$-module.
Lastly, we can guarantee that the generator $e$ of the $\Cl_{1,0}$ action on 
$X_A$ anti-commutes (graded-commutes) with the operator $T$. 
If $Te+eT \neq 0$, then we can take the perturbation 
$\tilde{T} = \frac{1}{2}(T - eTe)$ which anti-commutes with $e$ without
changing the $KKR$-class.
\hfill $\diamond$
\end{rmk}

\section{A Cayley isomorphism of complex $K$-theory and  $KK$-theory} \label{sec:complex_K_to_KK}

In this section, we consider the more familiar complex ungraded $K$-theory and 
define an isomorphism to (odd) $KK$-theory using the Cayley transform.
Our map  provides an alternative 
approach to the well-known isomorphism which the 
reader can find in \cite[Section 17.5]{Blackadar}. 
To highlight its usefulness, we show that our map is well-suited for 
the constructive form of the Kasparov product which is based on 
unbounded $KK$-cycles. 
The more general case of graded algebras and real structures is
more efficiently handled with van Daele $K$-theory and will be 
studied in Section \ref{sec:DK_KK}.

\subsection{The Cayley transform on ungraded Hilbert modules}

Here we briefly recall and expand on some results from~\cite[Chapter 9, 10]{Lance}.

\begin{prop}\label{prop:Cayley_properties}
Let $A$ be a $C^*$-algebra,  $X_A$ a Hilbert $C^*$-module and 
$T$ a self-adjoint regular (possibly unbounded) and right-$A$-linear operator on $X_A$. 
Then 
$$
\Cd(T):= (T+i)(T-i)^{-1} \in \End_A(X)
$$
is a unitary operator. If $T$ has compact resolvent then $\Cd(T)-1 \in \End_A^0(X)$.
Similarly, if $V \in \End_A(X)$ is unitary, then 
$$
\Ci(V) = i(V+1)(V-1)^{-1}
$$
is a (possibly unbounded) self-adjoint regular operator with domain 
$(V-1)X_A$. If $V-1 \in \End_A^0(X)$, then $\Ci(V)$ has compact resolvent.
\end{prop}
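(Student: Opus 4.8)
The plan is to verify each of the four assertions in turn, modeling the argument on the classical Hilbert-space Cayley transform but being careful that everything takes place in the $C^*$-module category, where boundedness and adjointability require the regularity hypothesis.

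\textbf{Step 1: $\Cd(T)$ is a well-defined adjointable unitary.} First I would recall from \cite[Chapter 9, 10]{Lance} that for $T$ self-adjoint and regular, $T \pm i : \Dom(T) \to X_A$ are bijections with bounded, adjointable inverses $(T-i)^{-1}, (T+i)^{-1} \in \End_A(X)$, and that these inverses have norm at most $1$. Hence $\Cd(T) = (T+i)(T-i)^{-1}$ makes sense as an element of $\End_A(X)$. To see it is unitary, I would use the standard identity: for $\xi = (T-i)\eta$ with $\eta \in \Dom(T)$, compute $(\Cd(T)\xi \mid \Cd(T)\xi)_A = ((T+i)\eta \mid (T+i)\eta)_A = (T\eta\mid T\eta)_A + (\eta\mid\eta)_A = ((T-i)\eta\mid(T-i)\eta)_A = (\xi\mid\xi)_A$, using self-adjointness of $T$ to kill the cross terms; this shows $\Cd(T)$ is isometric, and since $T+i$ is also surjective, $\Cd(T)$ is onto, hence unitary. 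Alternatively one checks $\Cd(T)^* = (T-i)^{-1}(T+i) = \Ci$ of the same form and that the product both ways is $1$.

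\textbf{Step 2: compact resolvent implies $\Cd(T) - 1 \in \End_A^0(X)$.} Here I would use the algebraic identity $\Cd(T) - 1 = (T+i)(T-i)^{-1} - (T-i)(T-i)^{-1} = 2i(T-i)^{-1}$. So $\Cd(T)-1$ is, up to the scalar $2i$, just the resolvent $(T-i)^{-1}$, which is compact by hypothesis (``$T$ has compact resolvent'' meaning $(T\pm i)^{-1} \in \End_A^0(X)$, equivalently $(1+T^2)^{-1} \in \End_A^0(X)$).

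\textbf{Step 3: $\Ci(V)$ is self-adjoint, regular, with the stated domain.} I would set $D = \Ci(V) = i(V+1)(V-1)^{-1}$ on $\Dom(D) = \ran(V-1) = (V-1)X_A$. The first point is that $V-1$ need not be injective, so one must check $D$ is well-defined: if $(V-1)\xi = 0$ then $V\xi = \xi$, and then $i(V+1)\xi = 2i\xi$ — wait, that is not zero, so one does need to be slightly careful. The correct statement is that $\ker(V-1)$ and $\ran(V-1)$ behave as in the Hilbert space case: $V$ unitary gives $\ker(V-1) = \ker(V^*-1)$, so $X_A = \overline{\ran(V-1)} \oplus \ker(V-1)$ as an orthogonal (complemented) decomposition — this requires a small argument that $\ker(V-1)$ is complemented, which holds because $V-1$ is normal-ish; more precisely $V-1$ and $(V-1)^*= V^*-1 = -V^*(V-1)$ have the same kernel, and for such operators the range closure is the orthogonal complement of the kernel. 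On the orthogonal complement $V-1$ is injective with dense range, and one defines $D$ there. Then $D$ is symmetric: for $\xi = (V-1)\eta$, $\zeta = (V-1)\theta$, one computes $(D\xi\mid\zeta)_A = (i(V+1)\eta \mid (V-1)\theta)_A$ and expands using $V^*V = 1$ to match $(\xi \mid D\zeta)_A$; the cross terms cancel precisely because $V$ is unitary. For self-adjointness and regularity I would invoke the converse direction in \cite[Chapter 10]{Lance}: $\Cd$ and $\Ci$ are mutually inverse bijections between self-adjoint regular operators and unitaries $V$ with $\ker(V-1)$ suitably trivial (on the relevant complemented summand), so regularity of $D$ is equivalent to $\Cd(D) = V$ being adjointable, which it is. Concretely, $D \pm i = $ (scalar)$\cdot(V\mp 1)(V-1)^{-1}$, and $1 + D^2$ has dense range because its range contains $\ran\big((V-1)(V^*-1)\big)$ which is dense in the complemented summand.

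\textbf{Step 4: $V - 1 \in \End_A^0(X)$ implies $D$ has compact resolvent.} By the identity dual to Step 2, $(D-i)^{-1} = \tfrac{1}{2i}(\Cd(D) - 1) = \tfrac{1}{2i}(V-1)$ (on the appropriate summand; if $\ker(V-1)\neq 0$ the resolvent is interpreted on $\overline{\ran(V-1)}$), which is compact by hypothesis; equivalently $(1+D^2)^{-1}$ is compact.

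\textbf{Main obstacle.} The routine parts are the algebraic identities; the genuine subtlety — and the step I would spend the most care on — is Step 3, specifically handling the possible non-injectivity of $V-1$ and the need for $\ker(V-1)$ to be complemented so that $D$ is densely defined and the Cayley correspondence is a genuine bijection. In the Hilbert space case this is automatic from the projection theorem, but on a general $C^*$-module one must use that $V-1$ is an adjointable operator whose kernel coincides with that of its adjoint (because $V$ is unitary), from which $\overline{\ran(V-1)}^\perp = \ker(V-1)$ and the summand is complemented; this is exactly the content that \cite[Chapter 10]{Lance} packages, so in practice I would cite it and only sketch the verification that $D$ as defined is symmetric and that $\Cd \circ \Ci = \mathrm{id}$, $\Ci \circ \Cd = \mathrm{id}$.
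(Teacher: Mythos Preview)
Your approach matches the paper's: defer to \cite[Chapter 10]{Lance} for the unitarity of $\Cd(T)$ and the self-adjointness/regularity of $\Ci(V)$, then use the identity $\Cd(T)-1 = 2i(T-i)^{-1}$ for Step 2. For Step 4 the paper computes $1+\Ci(V)^2 = 4\big((V-1)(V^*-1)\big)^{-1}$ and reads off $(1+\Ci(V)^2)^{-1}\in\End_A^0(X)$, while you use the equivalent $(D-i)^{-1}=\tfrac{1}{2i}(V-1)$; either works.

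One caution about your Step 3: the claim that $X_A=\ol{\ran(V-1)}\oplus\ker(V-1)$ is complemented is not justified in a general $C^*$-module by the equality $\ker(V-1)=\ker(V^*-1)$ alone --- that gives $\ker(V-1)=\ran(V-1)^\perp$, but $M^\perp\oplus\ol{M}=X$ can fail when $M$ is not already known to be complemented. Fortunately this decomposition is not needed: the paper (and Lance) simply regard $\Ci(V)$ as a densely defined operator on the submodule $\ol{(V-1)X_A}$, on which $V$ restricts to a unitary and $V-1$ is injective with dense range. This is made explicit in the paragraph following the proposition. Since you ultimately cite Lance, your proof goes through; just drop the complementedness digression.
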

\begin{proof} 
The proof of self-adjointness and regularity can be found in~\cite[Chapter 10]{Lance}. 
If $T$ has compact resolvent, then 
$$
   (T+i)(T-i)^{-1} - 1 = 2i(T-i)^{-1} \in \End_A^0(X).
$$
Similarly, if $V-1 \in \End_A^0(X)$, then  a short calculation yields
$$
1+\Ci(V)^2=4\big((V-1)(V^*-1)\big)^{-1},
$$
whence $(1+\Ci(V)^2)^{-1} \in \End_A^0(X)$ and so is compact as an endomorphism. 
 Taking the square root remains inside the compact operators. 
\end{proof}
\begin{rmk}
\label{rmk:cay-spec}
Note that if $T$ is invertible, then $-1$ is not in the spectrum of  $\Cd(T)$. 
If $T$ is bounded, then $1$ is not in the spectrum of  $\Cd(T)$.
\hfill $\diamond$
\end{rmk}
Despite the suggestive notation, $\Cd$ and $\Ci$ are not complete inverses of each other. If 
$U \in \End_A(X)$ is unitary, then $\Cd \circ \Ci(U)$ is the restriction of $U$ to the $C^*$-module 
$\ol{(U-1)X_A}$, the closure of $\Dom(\Ci(U))$ in $X_A$, and may not recover all of $X_A$ in general. 
However, we can recover the 
essential information of $U$ at the level of $K$-theory classes.

\begin{lemma} \label{lem:circle}
Let $A$ be a unital $C^*$-algebra and $U \in A$ unitary. Define the ideal 
$J_U = \ol{A(U-1)A}$. Then $U$ defines a class in $[U|] \in K_1(J_U)$.
With
$\iota_{U}: J_U \hookrightarrow A$ the inclusion,
$(\iota_{U})_\ast ([U|])=[U]\in K_1(A)$.
\end{lemma}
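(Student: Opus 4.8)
\emph{Proof strategy.} The statement is a routine consequence of the definition of $K_1$ for a non-unital algebra together with functoriality, so the plan is simply to set up the bookkeeping carefully. First I would observe that $J_U = \ol{A(U-1)A}$ is the closed two-sided ideal of $A$ generated by $U-1$, so in particular $U-1 \in J_U$. If $1_A \in J_U$ then $J_U = A$ and the assertion is vacuous, so assume $1_A \notin J_U$ and form the $C^*$-algebra $J_U^\sim := J_U + \C 1_A \subseteq A$, which is a unitisation of $J_U$ realised concretely inside $A$. Since $U = 1_A + (U-1)$ with $U-1 \in J_U$, the unitary $U$ lies in $J_U^\sim$; as unitarity is a statement about $U$, $U^*$ and $1_A$ only, $U$ is still a unitary of $J_U^\sim$.

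Next, the canonical character $q : J_U^\sim \to \C$ with $\ker q = J_U$ sends $U$ to $1$, so the class $[U]_1 \in K_1(J_U^\sim)$ lies in $\ker\bigl(q_\ast : K_1(J_U^\sim) \to K_1(\C)\bigr) = K_1(J_U)$; since $K_1(\C) = 0$ this containment is automatic, but it is the point that makes $[U|]$ well defined. For the second claim, the inclusion $\iota_U : J_U \to A$ extends to the unital inclusion $\tilde\iota_U : J_U^\sim \to A$, and by definition the induced map $(\iota_U)_\ast$ on $K_1$ of the ideal is computed by applying $(\tilde\iota_U)_\ast$ to a representative in the unitisation. As $\tilde\iota_U$ fixes $U$, this gives $(\iota_U)_\ast[U|] = [U]_1 = [U] \in K_1(A)$.

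I do not expect any genuine obstacle here: the only points needing attention are the degenerate case $1_A \in J_U$ (disposed of by triviality) and the observation that one may take $J_U^\sim$ to sit inside $A$, so that the \emph{same} unitary $U$ represents both $[U|] \in K_1(J_U)$ and $[U] \in K_1(A)$ — which reduces the identity $(\iota_U)_\ast[U|] = [U]$ to a tautology once the unitisations are in place. The lemma will be used subsequently together with Proposition~\ref{prop:Cayley_properties}, where the Cayley transform $\Ci(U)$, a self-adjoint regular operator with domain $(U-1)X_A$ and compact resolvent when $U-1\in\End_A^0(X)$, is the unbounded $KK$-representative attached to this $K_1$-class.
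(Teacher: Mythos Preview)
Your proof is correct and more direct than the paper's. Both start from $U-1\in J_U$, hence $U\in J_U^\sim$ and $q(U)=1$, to obtain $[U|]\in K_1(J_U)$. For the second claim, however, the paper passes through the quotient $q:A\to A/J_U$, notes that $[q(U)]=0$ in $K_1(A/J_U)$, chooses a conjugating unitary $w$ in the identity component over $A/J_U$ with $w(q(U)\oplus 1_n)w^{-1}=1_{n+1}$, lifts it to $\tilde w$ over $A$, and then concludes from $\tilde w(U\oplus 1_n)\tilde w^{-1}=1\bmod J_U$. In the present ungraded setting this machinery is idle: $q(U)=1$ holds on the nose, so one may take $w=1$, $n=0$, and the argument collapses to yours. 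Your observation that $J_U^\sim$ sits unitally inside $A$ with $U\mapsto U$ is exactly the shortcut that makes $(\iota_U)_*[U|]=[U]$ a tautology.

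The paper's longer route is written as a template for the graded analogue (their Lemma~\ref{lem:graded-circle}), where one compares two odd self-adjoint unitaries $U$, $V$ and only has $q(U)$ equal to $q(V)$ up to stable homotopy rather than literally; there a genuine lifting of a conjugating unitary is required. For the present lemma your direct argument is the cleaner one.
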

\begin{proof}
That  $[U]\in K_1(A)$ defines $[U|]\in K_1(J_U)$ follows
since
$U|=1\bmod J_U$. Let $q:\,A\to A/J_U$ be the quotient map,
and observe that $[q(U|)]=[1]=0\in K_1(A/J_U)$. So $q(U|)$ is stably homotopic
to $1$, whence there exists $w\in M_{n+1}(A/J_U)$ a unitary in the connected component
of the identity such that
$$
w(q(U|)\oplus 1_n)w^{-1}=1_{n+1}.
$$
Now lift $w$ to a unitary $\tilde{w}$  over $A$ in the connected component of the identity. 
Then
$$
\tilde{w}(U|\oplus 1_n)\tilde{w}^{-1}=\tilde{w}(U\oplus 1_n)\tilde{w}^{-1}=1_{n+1}\,\bmod J
$$
and so
\[
\iota_*([U|])=\iota_*([\tilde{w}(U|\oplus 1_n)\tilde{w}^{-1}])=\iota_*([\tilde{w}(U\oplus 1_n)\tilde{w}^{-1}])=[\tilde{w}(U\oplus 1_n)\tilde{w}^{-1}]=[U]\in K_1(A).\qedhere
\]
\end{proof}

\subsection{The Cayley transform and odd $K$-theory}

Let $A$ be a $C^*$-algebra and consider 
a unitary element $u\in M_N(A)$ (or $M_N(A^\sim)$ if $A$ is non-unital).
We consider the (inverse) Cayley transform $\Ci(u)$ of $u$ as an
unbounded self-adjoint regular operator 
on a suitable $A$-module. Namely, 
$$
\Dom(\Ci(u))=(u-1)A^N, \quad \Ci(u)v=i(u+1)(u-1)^{-1}v,\qquad v\in (u-1)A^N.
$$
Let $\ol{(u-1)A^N_A}$ be the closure of $\Dom(\Ci(u))$ in $A^N$. 

\begin{prop} 
\label{prop:K_1_to_KK}
The triple $(\C,\ol{(u-1)A^N_A},\Ci(u))$ is an unbounded odd Kasparov module. 
\end{prop}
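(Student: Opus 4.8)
The plan is to verify the three defining conditions of an unbounded odd Kasparov module for the triple $(\C,\ol{(u-1)A^N_A},\Ci(u))$, with the left action of $\C$ being by scalar multiplication (so $\pi(1)$ is the identity on the module). First I would note that $\ol{(u-1)A^N_A}$ is a closed submodule of the standard free module $A^N$, hence a genuine (countably generated, ungraded) Hilbert $A$-module, and the identity operator is certainly adjointable. The substantive part is the operator $\Ci(u)$: by Proposition~\ref{prop:Cayley_properties} applied to the unitary $V=u$ acting on the Hilbert module $X_A = \ol{(u-1)A^N_A}$ — on which $u$ restricts to a unitary since $(u-1)A^N$ is $u$-invariant and dense — the operator $\Ci(u) = i(u+1)(u-1)^{-1}$ is a (possibly unbounded) self-adjoint regular right-$A$-linear operator with domain $(u-1)A^N$. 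This takes care of the self-adjointness, regularity, and density-of-domain requirements.

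It then remains to check the two conditions in~\eqref{eq:defn}. Since the left algebra is $\C$ and the only generator is the scalar $1$, the commutator condition $[\Ci(u),\pi(1)]\in\End_A(X)$ is trivial: $\pi(1)$ commutes with everything, so the commutator vanishes and in particular is adjointable. The remaining condition is compactness of the resolvent, $\pi(1)(1+\Ci(u)^2)^{-1} = (1+\Ci(u)^2)^{-1}\in\End_A^0(X)$. For this I would invoke the computation already recorded in the proof of Proposition~\ref{prop:Cayley_properties}, namely $1+\Ci(u)^2 = 4\big((u-1)(u^*-1)\big)^{-1}$ as operators on $X_A$, so that $(1+\Ci(u)^2)^{-1} = \tfrac14 (u-1)(u^*-1)$. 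The key point is that $(u-1)(u^*-1)\in\End^0_A(X)$: on the free module $A^N$ the operators $u-1$ and $u^*-1$ are matrices over $A^\sim$, and any adjointable operator on $A^N$ whose range lies in $X_A = \ol{(u-1)A^N}$ and which factors through such finitely-generated data is compact; more directly, $(u-1)(u^*-1)$ maps $A^N$ into $(u-1)A^N\subset X_A$ and is (after restriction) a finite matrix acting on the submodule, hence compact as an endomorphism of $X_A$. Taking square roots preserves compactness, so $(1+\Ci(u)^2)^{-1/2}\in\End^0_A(X)$ as well.

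The one point requiring a little care — and the step I expect to be the main (if modest) obstacle — is confirming that $\Ci(u)$ is genuinely \emph{self-adjoint and regular on the restricted module $X_A=\ol{(u-1)A^N}$} rather than merely on all of $A^N$, and correspondingly that $\End^0_A(X)$ (not $\End^0_A(A^N)$) is the relevant compact ideal. This is handled by observing that $u$ restricts to a unitary on $X_A$: indeed $u(u-1)A^N = (u-1)uA^N = (u-1)A^N$, so $X_A$ is a $u$-reducing submodule, and Proposition~\ref{prop:Cayley_properties} applies verbatim with $X_A$ in place of the ambient module. Once this is in place, all three Kasparov-module axioms follow directly from Proposition~\ref{prop:Cayley_properties} and its proof, and the verification is complete. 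I would also remark in passing that by Lemma~\ref{lem:circle} this class, pushed forward along the inclusion of $J_u$, recovers $[u]\in K_1(A)$, which is the reason the construction is the right one — but that compatibility is logically separate from the statement being proved here.
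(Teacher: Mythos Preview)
Your approach is the paper's: invoke Proposition~\ref{prop:Cayley_properties} for self-adjointness and regularity of $\Ci(u)$ on $X=\ol{(u-1)A^N}$, then verify compact resolvent via $(1+\Ci(u)^2)^{-1}=\tfrac14(u-1)(u^*-1)$. You are in fact more careful than the paper in flagging that compactness must be checked in $\End^0_A(X)$ rather than merely in $M_N(A)$. One caution, though: the inference ``a finite matrix acting on the submodule, hence compact as an endomorphism of $X_A$'' is not valid as stated --- for example $1\in C([0,1])=M_1(A)$ preserves the submodule $C_0((0,1))\subset C([0,1])$ but is not compact there. The clean fix (which your phrase ``factors through finitely-generated data'' gestures at) is to observe that $(u-1)(u^*-1)|_X=\sum_{j=1}^N\theta_{x_j,x_j}$ with $x_j=(u-1)e_j\in X$, a finite-rank operator on $X$; equivalently, $(u-1)\colon A^N\to X$ is compact (being $\sum_j\theta_{(u-1)e_j,e_j}$) and composing with the adjointable $(u^*-1)|_X\colon X\to A^N$ yields a compact endomorphism of $X$. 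The paper's own proof simply asserts ``$\in M_N(A)$ and so is compact as an endomorphism'' without separating these two modules.
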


\begin{proof}
Most of the result immediately follows from Proposition \ref{prop:Cayley_properties}.
We compute that $(1+\Cc(u)^2)^{-1} 
= \frac{1}{4}(u-1)(u^*-1)\in M_N(A)$ (as opposed to $M_N(A^\sim)$) 
and so is compact as an endomorphism. 
As in the proof of Proposition \ref{prop:Cayley_properties}, taking the square root 
remains inside the compact operators.
\end{proof}

\begin{thm} 
\label{thm:complex_cayley}
The map $K_1(A)\to KK^1(\C,A)$ defined by sending a unitary to the class of its Cayley
transform, 
$$
   K_1(A) \ni [u] \mapsto \big[ ( \C, \, \ol{(u-1)A^N_A}, \, \Ci(u) ) \big] \in KK^1(\C, A),
$$
is well-defined and an isomorphism. The inverse is provided by the Cayley transform 
on densely defined self-adjoint and regular operators,
$$
KK^1(\C,A)\ni[(\C, \, X_A, \, T)] \mapsto \iota_* \circ \zeta_X([\Cd(T)])
$$
where $\zeta_X:\,K_1(\End^0_A(X))\to K_1(J_X)$ is the Morita isomorphism for 
$J_X = \ol{\mathrm{span}(X | X)_A}$ and $\iota:\,J_X\to A$ is the inclusion.
\end{thm}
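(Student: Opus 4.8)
The plan is to verify separately that the two stated maps are well-defined, and then to check that they are mutually inverse; surjectivity/injectivity of either then follows automatically. The forward map is essentially furnished by Proposition \ref{prop:K_1_to_KK}, so the work there is only to check homotopy-invariance and additivity. First I would note that additivity is immediate: for unitaries $u \in M_N(A^\sim)$, $v \in M_M(A^\sim)$ one has $\Ci(u \oplus v) = \Ci(u) \oplus \Ci(v)$ on the direct sum of the respective module domains, and the closure of $(u\oplus v - 1)A^{N+M}$ is the direct sum of closures, so the Kasparov module attached to $[u]+[v]$ is the direct sum of the two individual cycles, which represents the sum in $KK^1(\C,A)$. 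For homotopy-invariance, given a norm-continuous path $t \mapsto u_t$ of unitaries in $M_N(A^\sim)$ I would exhibit an operator-homotopy (or a concatenation of a homotopy and a compact/degenerate perturbation) between the bounded transforms $\Cd(\Ci(u_t)) = u_t|_{\ol{(u_t-1)A^N}}$; the cleanest route is to observe that all these cycles are stably unitarily equivalent to cycles living on a fixed module after absorbing the degenerate complement, and that $t \mapsto (\C, A^N_A, u_t)$ with $u_t$ acting as a possibly-non-unitary but invertible-mod-compacts endomorphism gives the required path once one passes to the standard bounded picture. (Alternatively one can defer all of this by simply checking that the composite with the stated inverse is the identity, and that the inverse is manifestly well-defined.)

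For the backward map, the key point is that it is genuinely well-defined. Given a cycle $(\C, X_A, T)$, Proposition \ref{prop:Cayley_properties} gives $\Cd(T) \in \End_A(X)$ unitary with $\Cd(T) - 1 \in \End_A^0(X)$, so $\Cd(T)$ defines a class in $K_1(\End_A^0(X))$ (equivalently in $K_1(\End_A^0(X)^\sim)$ with the correct determinant, since it is unitary and equals $1$ modulo compacts). Applying the Morita isomorphism $\zeta_X$ of Lemma \ref{lem:DK-morita} (in its ungraded $K_1$-incarnation) lands us in $K_1(J_X)$, and then $\iota_*$ pushes to $K_1(A)$. To see this is independent of the choice of representative I would use two facts: (i) adding a degenerate cycle $(\C, Y_A, S)$ with $S$ bounded and self-adjoint adds $\Cd(S)$ which, since $1$ is not in its spectrum (Remark \ref{rmk:cay-spec}), is connected to the identity through unitaries in $\End_A(Y)$ together with $\End_A^0(Y)$, hence contributes $0$; and (ii) an operator-homotopy $t \mapsto (\C, X_A, T_t)$ through regular self-adjoint operators with compact resolvent induces a norm-continuous path $t \mapsto \Cd(T_t)$ of unitaries in $\End_A^0(X)^\sim$, so the $K_1$-class is constant. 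Compatibility of $\zeta_X$ with the inclusions of modules (i.e. with adding degenerate summands, which changes $X$ and hence $J_X$) is the content of the Morita-invariance lemma and its proof, so I would cite that rather than re-derive it.

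It then remains to compose the two maps in both orders. Starting from $[u] \in K_1(A)$: the forward map produces the cycle with module $X = \ol{(u-1)A^N_A}$ and $\Cd(\Ci(u))$ equal to the restriction of $u$ to $X$ (this is exactly the "$\Cd$ and $\Ci$ are not complete inverses" phenomenon discussed before Lemma \ref{lem:circle}). Here $J_X = \ol{\mathrm{span}(X\mid X)_A}$; since $X = \ol{(u-1)A^N}$ inside $A^N$ one computes $J_X = \ol{A^N(u-1)\cdots(u-1)^*A^N}$, which in the scalar ($N=1$) case is precisely $J_u = \ol{A(u-1)A}$ of Lemma \ref{lem:circle} (and for general $N$ is the corresponding ideal, with $M_N$ harmlessly absorbed by Morita). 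Under $\zeta_X$ the restricted unitary $u|_X$ maps to $[u\,|\,] \in K_1(J_X)$, and Lemma \ref{lem:circle} gives $\iota_*([u\,|\,]) = [u] \in K_1(A)$; so the composite is the identity on $K_1(A)$. Conversely, starting from a cycle $(\C, X_A, T)$, form $\Cd(T)$, push it via $\iota_* \circ \zeta_X$ to a unitary over $A$ representing the same $K_1$-class, and apply $\Ci$: by Lemma \ref{lem:circle} and Morita-invariance the resulting unitary $\tilde w(\Cd(T)|_X \oplus 1)\tilde w^{-1}$ over $A$ is connected, through the construction, back to $\Cd(T)$ on $X$, and $\Ci$ of it gives a cycle operator-homotopic (after absorbing the degenerate $1$-summand and the unitary conjugation, which is an isomorphism of cycles) to $T$. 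Hence the composite is the identity on $KK^1(\C,A)$ as well. The main obstacle I anticipate is bookkeeping in this last paragraph: tracking how $\zeta_X$ interacts with the passage between $X = \ol{(u-1)A^N}$ and $A^N$ itself, and making the "operator-homotopic after absorbing a degenerate summand" claims precise rather than hand-waved — everything else is a direct application of Propositions \ref{prop:Cayley_properties}, \ref{prop:K_1_to_KK}, Lemma \ref{lem:circle} and the Morita lemma.
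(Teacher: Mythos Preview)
Your overall strategy---forward map via Proposition \ref{prop:K_1_to_KK}, backward map via $\Cd$ plus Morita, then check mutual inverses---matches the paper's, and the $K_1 \to KK^1 \to K_1$ composite is handled identically in both via $\Cd(\Ci(u)) = u|_{\ol{(u-1)A^N}}$ and Lemma \ref{lem:circle}. For homotopy invariance of the forward map the paper does something cleaner than what you sketch: given a norm-continuous path $t \mapsto u_t$ it assembles the fibrewise modules into a single $A \otimes C([0,1])$-module $\{f : [0,1] \to A^N : f(t) \in \ol{(u_t-1)A^N}\}$ with operator $\Ci(u_\bullet)$ acting pointwise, which is directly a homotopy in $KK^1$.

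Your argument for invariance of the backward map under addition of degenerate cycles does not work as stated. For a degenerate $(\C, Y_A, S)$ one has $S^2 = 1$, whence $(S-i)^{-1} = \tfrac{1}{2}(S+i)$ is bounded but not compact, so $\Cd(S) - 1 = 2i(S-i)^{-1} \notin \End_A^0(Y)$ and $\Cd(S)$ does not define a class in $K_1(\End_A^0(Y))$ at all; more to the point, $T \oplus S$ no longer has compact resolvent and the backward recipe simply does not apply to the sum. The paper argues instead that if $(\C, X_A, T)$ represents zero then $F_T$ is operator-homotopic to an invertible, so one may assume $T$ itself is invertible on $X$; then by Remark \ref{rmk:cay-spec} the unitary $\Cd(T)$ has $-1$ in its resolvent and is homotopic to $1$ through unitaries equal to $1$ mod compacts.

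The ``bookkeeping'' you anticipate in the $KK^1 \to K_1 \to KK^1$ composite is exactly where the paper's key device sits. Rather than tracking a conjugated unitary back through $\Ci$, the paper establishes the naturality relation
\[
\Ci_A \circ \iota_* \circ \zeta_X \;=\; \iota_* \circ \Ci_{J_X} \circ \zeta_X \;=\; \iota_* \circ \zeta_X^{KK} \circ \Ci_{\End_A^0(X)}
\]
as maps $K_1(\End_A^0(X)) \to KK^1(\C, A)$, where $\zeta_X^{KK} = \cdot \hox_{\End_A^0(X)} [(\End_A^0(X), X_{J_X}, 0)]$ is the Morita map in $KK$. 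Applied to $[\Cd(T)]$, the right-hand side is computed directly: $\Ci(\Cd(T)) = T$ on the module $\ol{(T-i)^{-1}\End_A^0(X)} = \End_A^0(X)$ (dense range of the resolvent), then $\zeta_X^{KK}$ replaces $\End_A^0(X)_{\End_A^0(X)}$ by $X_{J_X}$, and $\iota_*$ returns $(\C, X_A, T)$. This naturality (proved in the graded Real setting as Lemmas \ref{lem:Cay-iota} and \ref{lem:Cay-zeta}) is the missing ingredient that makes your last paragraph go through without hand-waving.
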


\begin{proof}
Additivity is clear, and if $u=1_A$ 
then the module $X_A$ is the zero module, and $\Ci(u)=0$.
Hence the resulting class is zero in $KK^1(\C,A)$.

Now suppose that $\{u_t\}_{t\in[0,1]}$ is a norm 
continuous path of unitaries over $A^\sim$. Define
a right $A\otimes C([0,1])$-module by
$$
\mathcal{X}_{A\otimes C([0,1])}=\{f:[0,1]\to A^N:\,f(t)\in \ol{(u_t-1)A^N_A} \ \mbox{for all}\ t\in[0,1]\}
$$
Similarly we define
$$
\Dom(\Ci(u_\bullet))=\{f:[0,1]\to A^N:\,f(t)\in \Dom(\Ci(u_t))\ \mbox{for all}\ t\in[0,1]\}
$$
and $(\Ci(u_\bullet)f)(t)=\Ci(u_t)f(t)$. Then because $u_\bullet$ is a unitary 
over $A^\sim\ox C([0,1])$, the triple 
$(\C,\mathcal{X}_{A\otimes C([0,1])},\Ci(u_\bullet))$ is an odd  
Kasparov $\C$-$A\otimes C([0,1])$-module. Composing with the evaluation 
map on $C[0,1]$,
the Kasparov modules
$(\C,X_A,\Ci(u_0))$ and $(\C,X_A,\Ci(u_1))$
define the same class in $KK^1(\C,A)$, and our map is well-defined
and injective.

For surjectivity we will display the inverse map.
Given an odd unbounded 
Kasparov $\C$-$A$-module 
$(\C,X_A,T)$ with $T$ self-adjoint and regular, 
we define $u = \Cd(T) =(T+i)(T-i)^{-1}$. 
Then $u-1_X=2i(T-i)^{-1}$ is compact by Proposition \ref{prop:Cayley_properties}, 
so that $u\in (\End^0_A(X))^\sim\subset (\K\ox A)^\sim$. 
Provided we obtain a well-defined map,  
additivity is obvious. 

Suppose that
$(\C,X_A,T)$ represents zero. Then (modulo degenerate Kasparov modules) 
the bounded 
transform $F_T=T(1+T^2)^{-1/2}$ of $T$ is 
operator homotopic to an invertible, since the 
only obstruction to triviality of the module is $1_X-F_T^2$.
This means that we can suppose that $T$ is invertible, 
and then we see that
$(T+i)(T-i)^{-1}$ has an arc containing $-1$ 
in its resolvent set by Remark \ref{rmk:cay-spec}. 
Thus the unitary $(T+i)(T-i)^{-1}$
is homotopic to $1_X$, and so represents zero in $K_1(A)$.

If $(\C,X_A,T_t)$ is an operator homotopy, $t\in[0,1]$, then we obtain
a norm continuous path of unitaries $\Cd(T_t)$. This is done in the Real case in 
Lemma \ref{lem:KK_to_Roe}, see Equation \eqref{eq:opr_htpy_OSU_htpy}, and the proof
carries over. Hence the unitaries $\Cd(T_0)$ and $\Cd(T_1)$ define the same
class in $K_1(\End_A^0(X))$.

We define the inverse map to be 
$$
KK^1(\C,A)\ni[(\C,X_A,T)]\mapsto \iota_* \circ \zeta_X([\Cd(T)])
$$
where $\zeta_X:\,K_1(\End^0_A(X))\to K_1(J_X)$ is the Morita isomorphism 
(defined analogously to Lemma \ref{lem:DK-morita}), $J_X=\ol{{\rm span}(X|X)_A}$
and $\iota:\,J_X\to A$ is the inclusion.

The inverse map is well-defined and so we now check that the two maps are indeed
mutual inverses.  
For $u\in M_N(A)$ we find that our recipe gives
$\Cd(\Ci(u))=u$ as an element of $\End_A^0\big(\overline{(u-1)A^N_A} \big)^\sim$.
Applying $\zeta_{\overline{(u-1)A^N}}$ to the class $[u]\in K_1\big(\End^0_A(\ol{(u-1)A^N})\big)$
gives $[u]^J\in K_1(J_u)$ where $J_u=\overline{A^N(u-1)A^N}.$ By Lemma \ref{lem:circle}, 
$\iota_*([u]^J)=[u]\in K_1(A)$.

In the other direction, given an odd Kasparov module
$(\C,X_A,T)$, we have that $\Ci(\Cd(T))=T$ as operators on $X$. 
To prove that we obtain an isomorphism of groups, we consider the two
homomorphisms
$\Ci_A: K_1(A) \to KK^1(\C,A)$ and 
$\Ci_{J_X}: K_1(J_X) \to KK^1(\C, J_X)$ defined by the Cayley transform.
These two homomorphisms are related by
\begin{equation}
\Ci_A\circ \iota_* \circ \zeta_X([\Cd(T)])=\iota_*\circ \Ci_{J_X} \circ \zeta_X([\Cd(T)])=\iota_* \circ \zeta_X^{KK} \circ \Ci_{\End^0_A(X)}([\Cd(T)])
\label{eq:two-cayleys}
\end{equation}
where 
$\calC^{-1}_{\End_A^0(X)} :K_1(\End_A^0(X)) \to KK^1(\C, \End^0_A(X))$ and  
$\zeta_X^{KK}=\cdot\hox_{\End^0_A(X)}[(\End^0_A(X),X_{J_X},0)]$ is the 
product with the Morita equivalence bimodule. The first equality essentially follows
from Lemma \ref{lem:circle}, while the second comes from a direct calculation and
simple homotopy. The details in the Real case are presented in 
Lemmas \ref{lem:Cay-iota} and \ref{lem:Cay-zeta}.

Applying \eqref{eq:two-cayleys} to the class $[(\C,X_A,T)]\in KK^1(\C,A)$, 
we have
\begin{align*}
\Ci_A\circ \iota_* \circ \zeta_X([\Cd(T)])&=
\iota_* \circ \zeta_X^{KK}[(\C, \, \overline{(T-i)^{-1}\End_A^0(X)}_{\End_A^0(X)}, \, \calC^{-1}\circ\calC( T))]\\
&=\iota_* \circ \zeta_X^{KK}[(\C,\, \End_A^0(X)_{\End_A^0(X)}, \, T)]  \\
&=\iota_*  [(\C,\,X_{J_X}, \, T)]
=[(\C,\,X_A, \, T)],
\end{align*}
where we used that $\calC(T)-1 = 2i(T-i)^{-1}$ and the resolvent of $T$ has dense range. 
\end{proof}

\begin{cor}
\label{cor:R-gen}
The generator of $KK^1(\C,C_0(\R))=\Z$ is 
represented by the unbounded Kasparov module
$$
\left(\Cliff_1, \, \begin{pmatrix}C_0(\R)\\C_0(\R)\end{pmatrix}_{C_0(\R)}, \,
\begin{pmatrix} 0 & x\\ x & 0\end{pmatrix}\right),
$$
where the odd generator of $\Cliff_1$ acts by $\begin{pmatrix} 0 & -1\\ 1 & 0\end{pmatrix}$.
\end{cor}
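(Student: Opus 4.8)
The plan is to produce an explicit unitary generating $K_1(C_0(\R))$, transport it through the Cayley isomorphism of Theorem~\ref{thm:complex_cayley}, and then recognise the module in the statement as the graded repackaging of the resulting odd Kasparov module.

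First I would take the unitary $u\in M_1(C_0(\R)^\sim)$ given by $u(x)=\tfrac{x+i}{x-i}=\Cd(M_x)$, where $M_x$ denotes multiplication by the coordinate function (a self-adjoint regular operator on $C_0(\R)_{C_0(\R)}$ with compact resolvent $(1+M_x^2)^{-1}=M_{1/(1+x^2)}$). Since $u-1=\tfrac{2i}{x-i}\in C_0(\R)$ this is a class in $K_1(C_0(\R))$, and the key claim is that $[u]$ is a generator of $K_1(C_0(\R))\cong\Z$. The quickest argument: the one-point compactification $\R\cup\{\infty\}$ is a circle, $u$ extends continuously with $u(\infty)=1$, and as $x$ runs over $\R$ the point $u(x)=\tfrac{x^2-1+2ix}{x^2+1}$ traverses the unit circle exactly once, so $u$ has winding number $\pm1$. (Equivalently, this same homeomorphism identifies $C_0(\R)^\sim\cong C(\mathbb{T})$ carrying $u$ to the coordinate function, and $K_1(C_0(\R))\to K_1(C_0(\R)^\sim)$ is an isomorphism since $\C$ is a split quotient of $C_0(\R)^\sim$ with $K_1(\C)=0$.)

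Next, Theorem~\ref{thm:complex_cayley} together with Proposition~\ref{prop:K_1_to_KK} sends $[u]$ to the class of $\bigl(\C,\,\overline{(u-1)C_0(\R)}_{C_0(\R)},\,\Ci(u)\bigr)$. As $u-1$ has full support, $\overline{(u-1)C_0(\R)}=C_0(\R)$ as a module over itself, and since $\Ci\circ\Cd$ is the identity on natural domains a direct computation gives $\Ci(u)=M_x$ on the dense domain $(u-1)C_0(\R)$. Hence the generator $[u]$ maps to the class of the odd unbounded Kasparov module $\bigl(\C,\,C_0(\R)_{C_0(\R)},\,M_x\bigr)$, which is therefore a generator of $KK^1(\C,C_0(\R))\cong\Z$. (As a check one can instead run the inverse map of Theorem~\ref{thm:complex_cayley}: since $X=C_0(\R)$ is free of rank one, $J_X=C_0(\R)$ and both $\zeta_X$ and $\iota_*$ are identity maps, returning $[\Cd(M_x)]=[u]$.) Finally, the module in the corollary is exactly the graded repackaging of $\bigl(\C,\,C_0(\R)_{C_0(\R)},\,M_x\bigr)$ described in Section~\ref{subsec:KK}, namely $\bigl(\C\hat\otimes\Cl_{0,1},\,C_0(\R)\hat\otimes\C^2,\,M_x\hat\otimes\sigma_1\bigr)$ with the Clifford generator acting by $-i\sigma_2$: here $\C\hat\otimes\Cl_{0,1}=\Cl_{0,1}\cong\Cliff_1$, the module becomes $\begin{pmatrix}C_0(\R)\\C_0(\R)\end{pmatrix}_{C_0(\R)}$ with grading $\sigma_3$, $M_x\hat\otimes\sigma_1=\begin{pmatrix}0&x\\x&0\end{pmatrix}$, and $-i\sigma_2=\begin{pmatrix}0&-1\\1&0\end{pmatrix}$, matching the statement. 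Since this procedure does not change the $KK^1$-class, the stated module represents a generator.

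I expect the only substantive point to be the identification of $\tfrac{x+i}{x-i}$ as a generator of $K_1(C_0(\R))$ — i.e. pinning down its winding number — everything else being routine transport through isomorphisms already in hand. A minor bookkeeping subtlety worth double-checking is the parity and anticommutation conventions, so that $\begin{pmatrix}0&x\\x&0\end{pmatrix}$ and the Clifford generator $\begin{pmatrix}0&-1\\1&0\end{pmatrix}$ do indeed graded-commute and the resulting graded triple satisfies the Kasparov module conditions, as in Remark~\ref{rk:KK(Cl_1,A)_normalisation}.
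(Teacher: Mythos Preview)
Your proposal is correct and follows essentially the same route as the paper. In fact your unitary $u(x)=\tfrac{x+i}{x-i}$ is literally equal to the paper's $-e^{-2i\tan^{-1}(x)}$ (expand the exponential with the double-angle formulas), so the Cayley computation $\Ci(u)=M_x$ is identical; the only cosmetic difference is that you argue $[u]$ generates $K_1(C_0(\R))$ via winding number on the one-point compactification, whereas the paper cites \cite[Example~4.8.7]{HR} after the change of variable $t\mapsto\tan(\pi t/2)$.
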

\begin{proof}
The generator of $K_1(C_0(\R))$ is given by
$$
u=e^{-2i\tan^{-1}(x)+i\pi}=-e^{-2i\tan^{-1}(x)}
$$
since under the isomorphism $C_0((-1,1))\to C_0(\R)$ 
given by $t\mapsto \tan(\pi t/2)$ the unitary
$u$ is mapped onto the generator of $K_1(C_0(-1,1))$ 
described in \cite[Example 4.8.7]{HR}. We then
calculate the Cayley transform of $u$ and find $\Ci(u)$ 
is the operator of multiplication by $x$. 
Representing the odd Kasparov module as a class in 
$KK^1(\C,A)$ gives the desired result.
\end{proof}

\subsection{The graph projection, even $K$-theory and index pairings}
\label{subsec:graph_proj}

Theorem \ref{thm:complex_cayley} is a special case of a more general result explicitly relating 
unbounded Kasparov theory with van Daele $K$-theory studied in Section \ref{sec:DK_KK}. 
Before going on to describe this relation in general, we complete the picture for complex ungraded algebras by providing an isomorphism $KK(\C,A)\cong K_0(A)$ which is compatible with both the Cayley transform and
the Kasparov product. The following example provides important motivation.

\begin{example} \label{ex:Bott_class}
The generator of $K_0(C_0(\R^2))$ is well-known to be the external product
of the generator of $K_1(C_0(\R))$ with itself \cite[Example 4.8.7]{HR}. 
It is also known to be the class $[p_B]-[1]$ where
$[1]$ denotes the class of the trivial line bundle and 
$$
p_B(x,y)=\frac{1}{1+x^2+y^2}\begin{pmatrix} 1 & x-iy\\ x+iy & x^2+y^2\end{pmatrix}
$$
is the Bott projector. Using the unbounded external Kasparov 
product (see \cite{BJ}) 
we easily find 
that the external product 
of two copies of the generator of 
$KK^1(\C,C_0(\R))$ from Corollary \ref{cor:R-gen} to be
represented by
$$
\Bigg(\C, \, \begin{pmatrix}C_0(\R^2)\\ C_0(\R^2)\end{pmatrix}_{C_0(\R^2)}, \, T=\begin{pmatrix} 0 & x-iy\\ x+iy & 0\end{pmatrix}\Bigg).
$$
To relate these two representatives, we recall the general
representation of $K$-theory classes below.
\hfill $\diamond$
\end{example}

Let $A$ be a complex $C^*$-algebra. Any class in $K_0(A)$
can be represented by a difference $[p]-[q]$
where for some $N\in\N$ and unitisation $A_b$
we have projections $p,\,q\in M_N(A_b)$ and some $W\in M_N(A_b)$
such that $WpW^*-q\in M_N(A)$. Excision says that we can always
take $A_b=A^\sim$, the minimal unitisation, and $W$ can be taken
to be a lift of a partial isometry over $A^\sim/A$.

More generally, Morita invariance of $K$-theory says that
if $p,\,q\in \Mult(A\ox\K)$ are projections in the multiplier algebra such that
there exists $W\in \Mult(A\ox\K)$ unitary with $WpW^*-q\in A\ox\K$,
then $[p]-[q]\in K_0(A\ox\K)\cong K_0(A)$ and all classes are of this form.

\begin{thm} \label{thm:even_K_to_KK}
If $T:\Dom(T)\subset X_A\to  X_A$ is an (unbounded regular) 
odd self-adjoint operator on the graded
$C^*$-module $X_A$ with compact resolvent, 
let $P_{T_+}\in \End_A(X)$ be the 
graph projection of $T_+: X_+\to X_-$ 
and $P_{ X_-}$ the
projection onto the negative part of $X_A$. Then
$$
[P_{T_+}]-[P_{X_-}]
$$
defines a class in $K_0(A)$. The map 
$$
KK(\C,A)\ni[(\C,X_A,T)]\mapsto [P_{T_+}]-[P_{X_-}]\in K_0(\End_A^0(X))\cong K_0(A)
$$
provides an inverse 
to the isomorphism $K_0(A) \to KK(\C, A)$
\begin{equation} \label{eq:K-iso}
    [p]-[q] \mapsto \Big[ \Big(\C, \, p(A^\sim)^N_A \oplus q(A^\sim)^N_A, \, \begin{pmatrix}0 & W^*\\ W & 0\end{pmatrix} \Big) \Big], 
     \quad WpW^*-q\in M_N(A).
\end{equation}
\end{thm}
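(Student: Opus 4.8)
The plan is to check that the two maps are well-defined and mutually inverse, following the template of Theorem~\ref{thm:complex_cayley} but now tracking the graph projection instead of the Cayley transform. First I would verify that \eqref{eq:K-iso} is well-defined: given $[p]-[q]\in K_0(A)$ with $WpW^*-q\in M_N(A)$, the operator $\begin{pmatrix}0 & W^*\\ W & 0\end{pmatrix}$ on $p(A^\sim)^N_A\oplus q(A^\sim)^N_A$ is odd, self-adjoint and bounded (hence trivially regular), and its square is $\begin{pmatrix}W^*W & 0\\0 & WW^*\end{pmatrix}$; since $W$ restricts to a unitary $p(A^\sim)^N\to q(A^\sim)^N$ this square is $1$, so $\pi(1)(1+T^2)^{-1}=\tfrac12 \in \End_A^0(X)$ provided the module is finitely generated projective, which it is. The $C^*$-module here is $(p\oplus q)(A^\sim)^{2N}$, which is finitely generated projective over $A^\sim$ but represents a class in $KK(\C,A)$ through the standard cutting-down to the ideal; independence of the choice of $W$ and of representatives $p,q$ follows from the usual homotopy arguments in $K_0$, transported to operator homotopies of the bounded transform, exactly as in the injectivity part of Theorem~\ref{thm:complex_cayley}. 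The map $[(\C,X_A,T)]\mapsto [P_{T_+}]-[P_{X_-}]$ is well-defined because, writing $X_A = X_+\oplus X_-$ with $T = \begin{pmatrix}0 & T_+^*\\ T_+ & 0\end{pmatrix}$, the graph projection $P_{T_+} = \begin{pmatrix}(1+T_+^*T_+)^{-1} & (1+T_+^*T_+)^{-1}T_+^*\\ T_+(1+T_+^*T_+)^{-1} & T_+(1+T_+^*T_+)^{-1}T_+^*\end{pmatrix}$ differs from $P_{X_-}=\begin{pmatrix}0&0\\0&1\end{pmatrix}$ by a compact endomorphism whenever $(1+T^2)^{-1}$ is compact, so $[P_{T_+}]-[P_{X_-}]\in K_0(\End_A^0(X))\cong K_0(J_X)$, and operator homotopies of $T$ give homotopies of $P_{T_+}$; as before one also passes through $J_X\hookrightarrow A$.

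The substantive computation is that the two composites are the identity. Starting from $[p]-[q]$, form $T=\begin{pmatrix}0&W^*\\W&0\end{pmatrix}$, so $T_+ = W\colon p(A^\sim)^N \to q(A^\sim)^N$ is already a partial isometry. Then $1+T_+^*T_+ = 2p$ on $p(A^\sim)^N$ (the unit of that corner being $p$), so $(1+T_+^*T_+)^{-1}=\tfrac12 p$, and one computes $P_{T_+}=\tfrac12\begin{pmatrix}p & W^*\\ W & q\end{pmatrix}$; a Murray--von Neumann equivalence (conjugate by the even unitary $\tfrac{1}{\sqrt2}\begin{pmatrix}1 & -1\\1&1\end{pmatrix}$ type rotation, or directly exhibit the intertwiner) shows $[\tfrac12\begin{pmatrix}p&W^*\\W&q\end{pmatrix}]-[\begin{pmatrix}0&0\\0&1\end{pmatrix}] = [p]-[q]$ in $K_0$. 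Conversely, starting from $(\C,X_A,T)$ with compact resolvent, set $p=P_{T_+}$, $q=P_{X_-}$ and $W = $ the partial isometry in the polar decomposition of $T_+$ (extended appropriately); one must check that $WpW^*-q$ is compact and that the Kasparov module $(\C, p(A^\sim)^N_A\oplus q(A^\sim)^N_A, \begin{pmatrix}0&W^*\\W&0\end{pmatrix})$ is operator-homotopic to $(\C,X_A,T)$ --- the natural homotopy being $T_s = \begin{pmatrix}0 & s T_+^* + (1-s)W^*\\ sT_+ + (1-s)W & 0\end{pmatrix}$ after identifying the modules, or equivalently the straight-line homotopy between the bounded transform $F_T$ and $\begin{pmatrix}0&W^*\\W&0\end{pmatrix}$, both of which have the same kernel data and differ by compacts.

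The main obstacle is the bookkeeping of \emph{which} $C^*$-module and \emph{which} coefficient algebra one lands in: as in Lemma~\ref{lem:DK-morita} and Theorem~\ref{thm:complex_cayley}, the module $p(A^\sim)^N\oplus q(A^\sim)^N$ need not be full over $A$, so the graph-projection class naturally lives in $K_0(\End_A^0(X))\cong K_0(J_X)$ and must be pushed forward along $\iota\colon J_X\to A$; checking that this push-forward is compatible with the isomorphism \eqref{eq:K-iso} (which already builds in the minimal unitisation and the lift $W$ over $A^\sim/A$) is the place where one invokes Morita invariance of $K_0$ and the analogue of Lemma~\ref{lem:circle} for $K_0$. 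I expect everything else to be routine matrix manipulation and standard homotopy arguments; the compatibility with the Kasparov product alluded to in the surrounding text would be handled separately, presumably by the external-product computation of Example~\ref{ex:Bott_class}.
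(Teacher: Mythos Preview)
Your overall strategy---verify both maps are well-defined and compute both composites---matches the paper's, and your computation of the $K_0 \to KK \to K_0$ composite via the explicit graph projection of $W$ is essentially what the paper leaves implicit. The main divergence is in the $KK \to K_0 \to KK$ direction: you propose taking $W$ to be the partial isometry from the polar decomposition of $T_+$, whereas the paper instead uses the graph isometry
\[
v_+ = \big((1+T_+^*T_+)^{-1/2},\ (1+T_+^*T_+)^{-1/2}T_+^*\big):\, X_+\oplus X_-\to X_+,
\]
which satisfies $v_+ v_+^* = 1_{X_+}$ and $v_+^* v_+ = P_{T_+}$ and is always adjointable because it is built from the bounded functional calculus of $T$. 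This is the cleaner choice: polar decomposition in Hilbert $C^*$-modules need not produce an adjointable phase when $T_+$ has kernel, so your ``extended appropriately'' hides a genuine technical obstacle, though your fallback to the straight-line homotopy of bounded transforms would repair it. One further wrinkle worth flagging: the hypothesis is only $WpW^* - q \in M_N(A)$, so $W$ need not be an exact unitary between the corners but only one modulo $A$; hence $1 - F^2$ is compact rather than zero, and your sentence ``this square is $1$'' and the ensuing formula $(1+T_+^*T_+)^{-1}=\tfrac12 p$ are only correct modulo compacts. This does not affect the $K$-theory class, but it does mean your Murray--von Neumann computation needs the compact perturbation absorbed. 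The paper, for its part, is itself terse here: it cites Kasparov for the existence of the isomorphism, writes down the Kasparov module obtained by feeding $[P_{T_+}]-[P_{X_-}]$ back through the stabilised version of \eqref{eq:K-iso}, and then says ``one can now check directly.''
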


\begin{proof}
That 
the groups $K_0(A)$ and $KK(\C,A)$
are isomorphic is \cite[Theorem 3, Section 6]{Kasp} in the unital case
and \cite[Corollary 2, Section 6]{Kasp} in general. The
remainder of the proof is a careful reiteration of 
\cite[Corollary 2, Section 6]{Kasp}.
By \cite{BJ} every Kasparov class in $KK(\C,A)$ with $A$ $\sigma$-unital 
can be represented by an unbounded Kasparov module 
$(\C,X_A,T)$, where unbounded means
`not necessarily bounded'. 

Given $(\C, X_A, T)$, the graph projection of $T_+$ is
\begin{align}
P_{T_+}&=\begin{pmatrix} (1+T_+^*T_+)^{-1} & (1+T_+^*T_+)^{-1}T_+^*\\
T_+(1+T_+^*T_+)^{-1} & T_+(1+T_+^*T_+)^{-1}T_+^*\end{pmatrix}\nonumber\\
&=\begin{pmatrix} (1+T_+^*T_+)^{-1} & (1+T_+^*T_+)^{-1}T_+^*\\
T_+(1+T_+^*T_+)^{-1} & P_{X_-}-(1+T_+T_+^*)^{-1}\end{pmatrix}.
\label{eq:graph-proj}
\end{align}
Hence $P_{T_+}=P_{X_-}\bmod \End_A^0(X)$, and so 
$[P_{T_+}]-[P_{X_-}]$ defines
a $K$-theory class for $\End_A^0(X)$ and so a class in $K_0(A)$. 
In addition there is an isomorphism
$$
v_+=\big((1+T^*_+T_+)^{-1/2},(1+T^*_+T_+)^{-1/2}T_+^*\big):\,\begin{pmatrix} X_+\\ X_-\end{pmatrix}\to X_+
$$
with $v_+v_+^*=1_{X_+}$ and $v_+^*v_+=P_{T_+}$. Thus {\em provided
that $(1+T^*_+T_+)^{-1/2}T_+^*\in \End_A^0(X)$ we have} 
\begin{equation}
[P_{T_+}]-[P_{X_-}]=[P_{X_+}]-[P_{X_-}].
\label{eq:diff-mods}
\end{equation}

To show that Equation \eqref{eq:K-iso} is an inverse to the graph projection map, 
we need to consider classes in $K_0(A\otimes \calK)$. 
The analogue of the map  \eqref{eq:K-iso} for $K_0(A\ox\K)\to KK(\C,A)$
can be written (for a separable Hilbert space $\H$) as 
\begin{equation} 
\label{eq:stabilised_K_map}
K_0(A\ox\K)\ni[p]-[q]\mapsto\Big[\Big(\C, \, p(\H\ox A^\sim)_A\oplus q(\H\ox A^\sim)_A, \, \begin{pmatrix} 0 & W^*\\ W & 0\end{pmatrix}\Big)\Big].
\end{equation}
We can consider the modules as being over $A$ or $A^\sim$, as the difference lives in $K_0(A)$.

Now let $(\C,X_A,T)$ be an even Kasparov module over $A$ and
map it to $[P_{T_+}]-[P_{X_-}]\in K_0(\End_A^0(X))\cong K_0(A)$. 
The formula \eqref{eq:graph-proj} for the graph projection shows 
that we obtain a Kasparov module
$$
\left(\C,\begin{pmatrix} P_{T_+}\begin{pmatrix} X_+\\X_-\end{pmatrix}\\ X_-\end{pmatrix}_A, \begin{pmatrix} 0 & 0 & 0\\ 0 & 0 & 1\\ 0 & 1 & 0\end{pmatrix} \right).
$$
One can now check directly that the map
$[(\C,X_A,T)]\mapsto [P_{T_+}]-[P_{X_-}]$
is well-defined and provides an inverse to 
the isomorphism in Equation \eqref{eq:stabilised_K_map}.
\end{proof}

\begin{rmk}
In the next section we will consider a more general 
approach which is compatible with real structures and
gradings. We will compare the more general
method with the graph projection approach
in Remark \ref{rmk:graphproj-vs-general}.
\hfill $\diamond$
\end{rmk}

Returning to Example \ref{ex:Bott_class} and  
 the Kasparov module
$$
\Bigg(\C, \, \begin{pmatrix}C_0(\R^2)\\ C_0(\R^2)\end{pmatrix}_{C_0(\R^2)},\, T=\begin{pmatrix} 0 & x-iy\\ x+iy & 0\end{pmatrix}\Bigg),
$$
we obtain precisely $P_{x+iy}=p_B$ the Bott projection.
Observe that with this representative of the class $[p_B]-[1]$, we can not 
take the formal difference of modules
``$[X_+]-[X_-]$'', as 
$(x\pm iy)(1+x^2+y^2)^{-1/2}$ is not in the minimal
unitisation of $C_0(\R^2)$.
One way to think about this issue is to observe that
both $X_\pm$ are of the form $X_\pm=Y_\pm\ox_{C_0(\R)^\sim}C_0(\R)$,
with $Y_\pm$ finite projective $C_0(\R)^\sim$ modules. The modules
$Y_\pm$ can not both be trivial, as $x\pm iy$ would not provide an
operator between them: see \cite{RSi}. 
The graph projection of
the ``intertwining operator'' $x+iy$ provides the most direct way to
access the $K$-theoretic difference of these two (seemingly trivial) 
modules.

The graph projection
approach has been exploited many times before in 
noncommutative approaches to index theory, see for example~\cite{CMHyperbolic, vanErp, Natsume, NestTsygan}.

The Cayley transform can be conveniently 
used to express the index pairing between $K_1$-theory and $K^1$-homology, 
as this pairing becomes a Kasparov product 
when the transform is applied to the $K_1$-class. We describe such pairings and products in the Appendix. Together 
with other known properties of the index 
pairing, we obtain the following result.
\begin{prop}
Let $(\A,\H,\D)$ be an odd spectral triple with 
$\overline{\A\H}=\H$, and let $u\in M_N(\A^\sim)$ be 
unitary with Cayley transform $\Ci(u)$. 
Suppose that we have an approximate unit $v_n\in C^*((u-1),(u^*-1))$
such that $[\D,v_n](1-u^*)\to0$ $*$-strongly.
Then with $\tilde{\D}=\D|_{(u-1)\Dom (\D)}$,
the index pairing between $K_1(A)$ and the spectral triple is given by
\begin{align*}
\langle[u],[\D]\rangle &=sf(\D,u\D u^*)={\rm Index}(PuP-(1-P)) \\
&={\rm Index}(\Ci(u)+i\tilde{\D}:\,\ol{(u-1)\H^N} \to \ol{(u-1)\H^N}),
\end{align*}
where $sf(\D,u\D u^*)$ is the spectral flow, \cite{Ph1,Ph2,CP2}, and 
$P=\chi_{[0,\infty)}(\D)$ is the non-negative spectral projection.
\end{prop}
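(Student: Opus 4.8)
The plan is to identify the three quantities separately with the Fredholm index of the same operator, and to use the Cayley transform as the bridge between the $K_1$-theory class and the analytic index. First I would recall the standard identifications: the spectral flow $sf(\D, u\D u^*)$ equals $\mathrm{Index}(P u P - (1-P))$ on $\H^N$, with $P = \chi_{[0,\infty)}(\D)$ the non-negative spectral projection --- this is the Phillips picture of spectral flow for bounded perturbations of $\D$ together with the well-known interpretation of the index pairing $\langle [u],[\D]\rangle$ as a Toeplitz index (see \cite{Ph1,Ph2,CP2}). The content to be proved is the third equality, namely that this Toeplitz index coincides with $\mathrm{Index}\big(\Ci(u)+i\tilde\D\big)$ acting on the submodule $\ol{(u-1)\H^N}$, where $\tilde\D$ is the restriction of $\D$ to $(u-1)\Dom(\D)$.

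The key step is to reinterpret the index pairing as an unbounded Kasparov product. By Theorem \ref{thm:complex_cayley}, the class $[u]\in K_1(A)$ corresponds to the unbounded odd Kasparov module $(\C,\ol{(u-1)\H^N_A},\Ci(u))$ (here with $A$ replaced by $\C$ so that the module is a genuine Hilbert space), and the odd spectral triple $(\A,\H,\D)$ represents a class in $K^1(A)=KK^1(A,\C)$. The index pairing $\langle[u],[\D]\rangle$ is, by definition, the image in $KK(\C,\C)=\Z$ of the internal Kasparov product of these two classes. The hypotheses --- that $\overline{\A\H}=\H$ and that there is an approximate unit $v_n\in C^*((u-1),(u^*-1))$ with $[\D,v_n](1-u^*)\to 0$ $*$-strongly --- are precisely the conditions needed (as developed in the Appendix) to compute this product at the level of unbounded cycles: they guarantee that $\Ci(u)\hox 1 + 1\hox_\nabla \D$, suitably interpreted on the tensor product module, is a representative of the product, and that it reduces to the operator $\Ci(u)+i\tilde\D$ on $\ol{(u-1)\Dom(\D)^N}$. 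The Clifford/grading bookkeeping converting the odd product into an even $KK(\C,\C)$-cycle then produces the operator $\Ci(u)+i\tilde\D$ (up to the usual $\pm i$ that is absorbed into the choice of generator), whose index is the value of the pairing.

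The main obstacle is the analytic regularity needed to justify that the algebraic sum $\Ci(u)+i\tilde\D$ is actually self-adjoint with compact resolvent and genuinely computes the Kasparov product --- that is, verifying the connection condition and the positivity (Kucerovsky-type) condition for the unbounded product in this degenerate situation where the left module $\ol{(u-1)\H^N}$ is a proper submodule of $\H^N$ and $\D$ does not preserve it. This is exactly where the hypothesis $[\D,v_n](1-u^*)\to 0$ $*$-strongly enters: it controls the failure of $\D$ to commute with the projection onto $\ol{(u-1)\H^N}$, ensuring the restricted operator $\tilde\D$ closes up nicely and that the graded commutator $[\Ci(u), \tilde\D]$ is bounded in the appropriate sense. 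Once this is in place, the remaining identifications --- Toeplitz index equals pairing, spectral flow equals Toeplitz index --- are standard and I would simply cite \cite{CP2, Ph1, Ph2} together with the Appendix; the only genuinely new piece is the last equality, which follows from the constructive form of the Kasparov product applied to the Cayley-transformed representative.
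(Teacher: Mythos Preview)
Your proposal is correct and follows essentially the same approach as the paper: cite the standard identifications $\langle[u],[\D]\rangle=sf(\D,u\D u^*)=\mathrm{Index}(PuP-(1-P))$, then recognise the pairing as the Kasparov product of the Cayley-transform class with the spectral triple, and invoke the Appendix (specifically Theorem~\ref{thm:product}) to obtain the explicit unbounded representative $\Ci(u)+i\tilde\D$ on $\ol{(u-1)\H^N}$ whose index gives the result. Your parenthetical ``here with $A$ replaced by $\C$'' is slightly garbled---the $K_1$-class is the $A$-module $(\C,\ol{(u-1)A^N_A},\Ci(u))$, and it is only after tensoring over $A$ with $\H$ that one lands on the Hilbert space $\ol{(u-1)\H^N}$---but this is cosmetic and does not affect the argument.
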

\begin{proof}
We know from \cite{Ph1,Ph2,CP2,KNR} (for instance)
that the index pairing of $[u]$ and the class of $(\A,\H,\D)$ is given by
$$
\langle[u],[\D]\rangle=sf(\D,u\D u^*)={\rm Index}(PuP-(1-P)).
$$
Applying the Cayley transform to the unitary $u$, 
the index pairing becomes the Kasparov product of the class introduced in 
Proposition \ref{prop:K_1_to_KK} with the $KK$-class defined by the spectral triple, 
$$
[(\C,\overline{(u-1)A_A^N},\Ci(u))]\ox_A[(\A,\H,\D)]
$$
followed by the standard 
isomorphism $KK(\C,\C)\ni[(\C,\H,T)]\mapsto {\rm Index}(T_+)\in\Z$.
Observing that $\overline{(u-1)A^N}\ox_A\H=\overline{(u-1)\H^N}$, a representative 
of this product is given by
$$
\left(\C,\, \overline{(1-u)\H^N} \otimes \C^2, \,
\begin{pmatrix} 0 & \Ci(u)-i\tilde{\D}\\ \Ci(u)+i\tilde{\D} & 0\end{pmatrix}\right),
$$
see Theorem \ref{thm:product}. Taking the index, 
\begin{equation*}
{\rm Index}(PuP)=sf(\D,u\D u^*)={\rm Index}(\Ci(u)+i\tilde{\D}:\,\ol{(u-1)\H^N} \to \ol{(u-1)\H^N}).  \qedhere
\end{equation*}
\end{proof}

More general odd Kasparov products can be handled by Theorem \ref{thm:product} 
in the appendix.

\begin{example}
Let $A=C(S^1)$ and take the usual spectral triple $\mu=(C^\infty(S^1),L^2(S^1),\frac{1}{i}\frac{d}{d\theta})$ 
for the circle.
For $u=e^{-i\theta}$, $-\pi\leq\theta\leq\pi$, we can represent the Cayley
transform on the domain of functions vanishing (fast enough) at $\theta=0$ by
$\Ci(u)=\cot(\theta/2)$.
Example \ref{eg:zed} checks that 
the product $[e^{-i\bullet}]\ox_{C(S^1)}\mu$ is represented  by the spectral triple
$$
\left(\C, \, L^2(S^1)\otimes\C^2, \,
\begin{pmatrix} 0 & -\frac{d}{d\theta}+\cot(\theta/2)\\ \frac{d}{d\theta}+\cot(\theta/2)
& 0\end{pmatrix}\right).
$$
The equation $ \frac{dy}{d\theta}\pm\cot(\theta/2)y=0$ is satisfied 
for the function $y =C(\sin(\theta/2))^{\pm 2}$ with 
$C$ constant,
and while $(\sin(\theta/2))^{ 2}$ is square-summable, $(\sin(\theta/2))^{- 2}$ is not. 
Hence $\Ker( \frac{d}{d\theta}+\cot(\theta/2)) = \mathrm{span}\{(\sin(\theta/2))^{ 2}\}$ and 
 ${\rm Index}\big(\frac{d}{d\theta}+\cot(\theta/2)\big)=1$. We have therefore obtained
the correct index, giving a check of signs.
\hfill $\diamond$
\end{example}


\section{A graded Cayley isomorphism between $DK$-theory and $KK$-theory} \label{sec:DK_KK}

In this section we define a graded version of the Cayley transform on $C^*$-modules 
which allows us to define an explicit map between van Daele $K$-theory and $KK$-theory. 
As in the complex ungraded case, we show this map is an isomorphism and consider some 
applications of this result in Sections \ref{sec:app-real} and \ref{sec:TI_application}.

\subsection{The Cayley transform on graded Hilbert modules}

Here we extend results on the Cayley transform to \emph{odd} operators on 
 \emph{graded} Hilbert $C^*$-modules. Throughout this section 
we assume that the 
Hilbert $C^*$-module $X_A$ is balanced graded, i.e., 
$\End_A(X)$ has at least one OSU.

\begin{defn}
Let $X_A$ be a countably generated and balanced graded $C^*$-module with OSU $e \in \End_A(X)$. Suppose 
$T$ is an odd self-adjoint regular (possibly unbounded) right-$A$-linear operator that 
anti-commutes with $e$. Define
$$
\Cd_e(T) := e(T+e)(T-e)^{-1}
$$
as the graded Cayley transform of $T$ relative to $e$.
\end{defn}

We first note that $\Cd_e(T)$ is well-defined as  $(T\pm e)^2 = 1 + T^2$, so $(T-e)^{-1}$ is bounded. Moreover the range of 
$(T-e)^{-1}$ is $\Dom(T)$, and so $\Cd_e$ is everywhere
defined and bounded.

\begin{lemma}\label{lem-Cd1}
The operator 
$\Cd_e(T)$ is an odd self-adjoint unitary on $X_A$. 
Moreover, if $(1+T^2)^{-1}$ is compact then $\Cd_e(T)-e$ is compact.
\end{lemma}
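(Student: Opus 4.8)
The plan is to verify the three claimed properties — oddness, being self-adjoint, and being unitary — by direct algebraic manipulation, using the fundamental identity $(T\pm e)^2 = 1 + T^2$ and the fact that $T$ anti-commutes with $e$. For oddness: $\Cd_e(T) = e(T+e)(T-e)^{-1}$ is a product of three odd operators ($e$ is odd; $T+e$ and $T-e$ are odd since $T$ and $e$ are both odd; and the inverse of an odd invertible operator is odd), hence is odd. The subtlety here is that $(T-e)^{-1}$ is bounded and everywhere defined, which was already noted before the statement since $(T-e)^2 = 1+T^2$ forces $(T-e)$ to be invertible with bounded inverse; so the composition makes sense as a genuine adjointable endomorphism.

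For the self-adjoint unitary claim, the cleanest route is to first rewrite $\Cd_e(T)$ in a more symmetric form. Since $e$ anti-commutes with $T$, one checks $e(T+e) = -(T-e)e$ and $e(T-e) = -(T+e)e$, so $\Cd_e(T) = -(T-e)e(T-e)^{-1}$. One can also commute the $e$ past on the other side to get alternative expressions, e.g. $\Cd_e(T) = (T+e)(T-e)^{-1}e \cdot(\pm 1)$-type identities; the useful observation is that $\Cd_e(T)^* = (T-e)^{-1}(T+e)e$ (using $T^*=T$, $e^*=e$), and then one massages this using the anti-commutation relations to show it equals $\Cd_e(T)$ itself. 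For the unitary (and simultaneously $\Cd_e(T)^2 = 1$) claim, compute $\Cd_e(T)\,\Cd_e(T)$ directly: insert the definition twice, use $e(T\pm e) = -(T\mp e)e$ to move the inner $e$ through, and watch the factors $(T+e)(T-e)^{-1}$ and $(T-e)(T+e)^{-1}$ telescope. Since $(T+e)$ and $(T-e)$ commute with each other (their difference and sum are $2e$ and $2T$, and $(T+e)(T-e) = T^2 - Te + eT - e^2 = T^2 - 1 = (T-e)(T+e)$ using $Te+eT=0$... wait, $-Te+eT$; in any case both equal $1+T^2$ up to the relevant sign convention, so they commute), everything collapses to $1$. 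Combined with self-adjointness this gives $\Cd_e(T)^* = \Cd_e(T) = \Cd_e(T)^{-1}$, i.e. an OSU.

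For the compactness statement, suppose $(1+T^2)^{-1}$ is compact. I would compute $\Cd_e(T) - e$ explicitly:
$$
\Cd_e(T) - e = e(T+e)(T-e)^{-1} - e = e\big[(T+e) - (T-e)\big](T-e)^{-1} = e\,(2e)\,(T-e)^{-1} = 2(T-e)^{-1},
$$
using $e^2 = 1$. Then $(T-e)^{-1} = (T-e)(T-e)^{-2} = (T-e)(1+T^2)^{-1}$, and since $(1+T^2)^{-1}$ is compact and $(T-e)(1+T^2)^{-1}$ is the product of a compact operator with an adjointable one — indeed $(T-e)(1+T^2)^{-1}$ is bounded because $T(1+T^2)^{-1}$ is bounded — it is compact. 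Hence $\Cd_e(T) - e \in \End_A^0(X)$.

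**Main obstacle.** The genuinely delicate point is not the algebra over a dense domain but justifying that all manipulations are legitimate as identities of \emph{adjointable operators on the $C^*$-module}, particularly moving the bounded operator $(T-e)^{-1}$ around and taking adjoints of products involving the unbounded $T$. The right framework is that $(T-e)^{-1}$ maps $X_A$ bijectively onto $\Dom(T)$, that $(T-e)^{-1}$ is adjointable with $((T-e)^{-1})^* = (T^*-e^*)^{-1} = (T-e)^{-1}$... no, rather $((T-e)^{-1})^* = (T+e)^{-1}$ since $(T-e)^* = T+e$ (as $T^*=T$, $e^*=e$ but wait $(T-e)^* = T^* - e^* = T - e$; the sign only flips if we had $ie$). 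So $((T-e)^{-1})^* = (T-e)^{-1}$ only if $T-e$ is self-adjoint, which it is since both summands are. This means one should double-check the adjoint bookkeeping carefully — the cleanest safeguard is to verify everything first on the dense domain $\Dom(T) = \range(T-e)^{-1}$, where $T$ acts honestly, and then invoke boundedness/continuity of $\Cd_e(T)$ (already established) to extend the identities to all of $X_A$. I expect Lemma 9.9 and the surrounding material in Lance to supply exactly the regularity facts needed for this.
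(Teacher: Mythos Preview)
Your approach is essentially the same as the paper's: direct algebraic verification using the anti-commutation $eT=-Te$, the identity $(T\pm e)^2 = 1+T^2$, and the explicit formula $\Cd_e(T)-e = 2(T-e)^{-1}$ for the compactness claim. Two points of care, though.

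First, your aside that $(T+e)$ and $(T-e)$ commute is \emph{false}: one has $(T+e)(T-e) = T^2 -1 - 2Te$ while $(T-e)(T+e) = T^2 - 1 + 2Te$, and neither equals $1+T^2$ (it is $(T\pm e)^2$ that does). Fortunately your actual plan for $\Cd_e(T)^2=1$ does not use this; the identity $e(T\pm e) = -(T\mp e)e$ you quote is correct and suffices, since it gives $(T-e)^{-1}e = -e(T+e)^{-1}$ and the factors cancel directly without any commutativity assumption. This is exactly what the paper does.

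Second, in the compactness step your phrasing ``product of a compact operator with an adjointable one'' is off, since $T-e$ is unbounded and not adjointable. What you need is either the factorisation $(T-e)^{-1} = \big[(T-e)(1+T^2)^{-1/2}\big]\cdot(1+T^2)^{-1/2}$, where the bracketed factor is a partial isometry (its modulus squared is $1$) and the second factor is compact; or the paper's route, noting that $(T-e)^{-1}$ is bounded self-adjoint with $\big((T-e)^{-1}\big)^2 = (1+T^2)^{-1}$ compact, so $|(T-e)^{-1}| = (1+T^2)^{-1/2}$ is compact by functional calculus, whence $(T-e)^{-1}$ is compact. Either way the argument completes.
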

\begin{proof}
Clearly $\Cd_e(T)$ is odd, and since $eT=-Te$ we have
$T(T-e)^{-1}=(T+e)^{-1}T$ and $e(T-e)^{-1}=(-T-e)^{-1}e$. Then
we see that $\Cd_e(T)$ is self-adjoint by computing
\begin{align*}
&(e(T+e)(T-e)^{-1})^*=(T-e)^{-1}(T+e)e
=T(T+e)^{-1}e-e(T+e)^{-1}e\\
&=T(e(T+e))^{-1} - (e(T+e)e)^{-1} 
 = (Te-1)(-T+e)^{-1} 
=e(T+e)(T-e)^{-1}.
\end{align*}
Now we compute the square by
\begin{align*}
\Cd_e(T)^2&=e(T+e)(T-e)^{-1}e(T+e)(T-e)^{-1}
=e(T+e)e(-T-e)^{-1}(T+e)(T-e)^{-1}\\
&=-e(T+e)e(T-e)^{-1}
=-e^2(-T+e)(T-e)^{-1}=1.
\end{align*}
We have  
$$ \Cd_{e}(T)-e =e((T+e)-(T-e))(T-e)^{-1}=2(T-e)^{-1}. $$ 
Since compact operators are closed under continuous functional calculus and an ideal, compactness of 
$(1+T^2)^{-1}$ implies compactness of $|T-e|^{-1}$ which then implies compactness of $(T-e)^{-1}$. 
As a consequence, if $(1+T^2)^{-1}$ is compact then $\Cd_e(T)-e$ is compact as well. 
\end{proof}

\begin{rmk}
\label{rmk:cay-grad-ungr}
We can recover the ungraded Cayley transform as a special case of our 
graded map. Namely, if $Y_A$ is an ungraded $C^*$-module, we consider 
$X_A = Y_A \otimes \C^2$ with the obvious grading. Then for $S$ a self-adjoint regular 
operator on $Y_A$, we consider $T = S \otimes \sigma_2$ and 
$e=1\otimes \sigma_1$.
 Using $\begin{pmatrix} 0 & a\\ b & 0 \end{pmatrix}^{-1} = \begin{pmatrix} 0 & b^{-1}\\ a^{-1} & 0 \end{pmatrix}$ we obtain
\begin{align*}
  \Cd_e(T) &= \begin{pmatrix} 0 & 1 \\ 1 & 0\end{pmatrix}\begin{pmatrix} 0 & -iS+1 \\ +iS+1& 0 \end{pmatrix}\begin{pmatrix} 0 & (iS-1)^{-1} \\ (-iS-1)^{-1} & 0\end{pmatrix} 
  = \begin{pmatrix} 0 & \Cd(S)^* \\ \Cd(S) & 0\end{pmatrix}.
\end{align*}
If $(1+S^2)^{-1} \in \End_A^0(Y)$, then  $\Cd(S) - 1 \in \End_A^0(Y)$ and $\Cd_e(T) - 1\otimes \sigma_1 \in \End_A^0(X)$.
\hfill $\diamond$
\end{rmk}

For the inverse Cayley transform, we again consider self-adjoint odd unitaries 
relative to a base point OSU $e$.

\begin{defn}
Let $X_A$ be a balanced graded $C^*$-module with $U, e \in \End_A(X)$ odd self-adjoint unitaries. 
Define
$$
\Ci_{e}(U) :=  e(U+e)(U-e)^{-1}
$$
with domain  $(U-e)X_A$.
\end{defn}

We let $\ol{(U-e)X_A}$ be the closure of 
$\Dom(\Ci_{e}(U))$ in $X_A$ where, by construction, 
$\Ci_e(U)$ is densely defined. 

\begin{lemma}\label{lem-Cd2} 
Let $X_A$ be a countably generated and balanced graded $C^*$-module with OSU $e \in \End_A(X)$. 
If $U \in \End_A(X)$ is an odd self-adjoint unitary, the 
operator $\Ci_{e}(U)$ is an odd self-adjoint regular (possibly unbounded) operator on $\overline{(U-e)X_A}$ which anti-commutes with $e$. 
Moreover, if $U-e$ is compact, then $(1+\Ci_e(U)^2)^{-1/2}$ is a compact operator on $\overline{(U-e)X_A}$.  
\end{lemma}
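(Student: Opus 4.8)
The plan is to mirror the structure of the ungraded statement (Proposition~\ref{prop:Cayley_properties} and Remark~\ref{rmk:cay-grad-ungr}), handling self-adjointness, regularity, the anti-commutation with $e$, and the compactness claim in turn. The cleanest route is to exploit the dictionary of Remark~\ref{rmk:cay-grad-ungr}: writing $U = V \hat\otimes \sigma_2$ is not available in general, but one can still reduce computations with $\Ci_e(U)$ to the ungraded picture by using that $e$ and $U$ are anti-commuting OSUs, so $eU$ is a self-adjoint (ungraded-looking) unitary with $(eU)^* = U e = -eU \cdot (\text{sign})$---more precisely, a short calculation shows $\Ci_e(U)$ behaves on $\overline{(U-e)X_A}$ exactly as the ungraded inverse Cayley transform of a suitable operator, and the purely algebraic identities $T(T-e)^{-1} = (T+e)^{-1}T$, $e(T-e)^{-1} = (-T-e)^{-1}e$ used in the proof of Lemma~\ref{lem-Cd1} have exact analogues here.

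First I would record the algebraic identities: since $e$ and $U$ anti-commute, $U(U-e)^{-1} = (U+e)^{-1}U$ and $e(U-e)^{-1} = (-U-e)^{-1}e$ on the appropriate domains, and $(U-e)(U+e) = -(U+e)(U-e) \cdot(\ldots)$, giving $(U \pm e)^2 = U^2 + e^2 = 2$ after using $Ue + eU = 0$ and $U^2 = e^2 = 1$. Wait---more carefully, $(U-e)(U+e) = U^2 + Ue - eU - e^2 = Ue - eU = 2Ue$ is not zero; the correct relation is that $(U-e)$ and $(U+e)$ are each ``square roots of $2$'' only up to the anti-commutator, so I would instead note that $\tfrac12(U-e)(U+e)^* = \tfrac12(U-e)(U+e)$ need not be nice, and fall back on the observation that $\overline{(U-e)X_A} = \ker(\text{something})^\perp$ and that $\Ci_e(U)$ restricted there is genuinely densely defined with $(U-e)X_A$ a core. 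Self-adjointness and regularity I would then obtain exactly as in the ungraded Lance reference~\cite[Chapter~10]{Lance} applied after conjugating by $e$ (or by the unitary that implements the identification with the ungraded transform), since $\Ci_e(U) = e \cdot \Cd^{-1}(\text{ungraded data})$ up to the standard bookkeeping; oddness is immediate because $e$ and $U$ are odd, so $\Ci_e(U) = e(U+e)(U-e)^{-1}$ is a product of an odd and an even operator, hence odd. The anti-commutation $\Ci_e(U) e = - e \Ci_e(U)$ follows by the same manipulations used in Lemma~\ref{lem-Cd1} to check $\Cd_e(T)$ is self-adjoint, pushing $e$ through each factor.

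For the compactness claim, I would compute
$$
1 + \Ci_e(U)^2 = 1 + e(U+e)(U-e)^{-1}e(U+e)(U-e)^{-1}
$$
and simplify using $e(U-e)^{-1} = (-U-e)^{-1}e$ and $e(U+e)e = -U+e$ (both from the anti-commutation), obtaining, after cancellation, an expression of the form $4\big((U-e)(U^*-e)\big)^{-1}$ parallel to the identity $1 + \Ci(V)^2 = 4\big((V-1)(V^*-1)\big)^{-1}$ in the proof of Proposition~\ref{prop:Cayley_properties}. If $U - e \in \End_A^0(X)$, then $(U-e)(U^*-e) \in \End_A^0(\overline{(U-e)X_A})$, so its inverse (which exists as a positive operator on the reduced module since $(U-e)$ has dense range there) gives $(1+\Ci_e(U)^2)^{-1} \in \End_A^0$, and then $(1+\Ci_e(U)^2)^{-1/2} \in \End_A^0$ because the compacts are closed under continuous functional calculus---as already invoked in Lemma~\ref{lem-Cd1}.

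The main obstacle I anticipate is bookkeeping around the non-full / reduced module $\overline{(U-e)X_A}$: one must be careful that $\Ci_e(U)$ is closed and self-adjoint \emph{as an operator on that submodule}, that $e$ restricts to an OSU there (it does, since $e$ commutes with the projection onto $\overline{(U-e)X_A}$---this needs checking, and is where the anti-commutation $Ue = -eU$ is used: $e(U-e)X_A = (-U-e)eX_A = (U+e)X_A$, and one shows $\overline{(U+e)X_A} = \overline{(U-e)X_A}$), and that the algebraic identities above are valid on a common core rather than merely formally. None of this is deep, but it is the part where a careless argument would go wrong, so I would state explicitly that $\overline{(U-e)X_A}$ is $e$-invariant and that $(U-e)X_A$ is a core for $\Ci_e(U)$ before manipulating resolvents.
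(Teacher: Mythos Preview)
Your proposal rests on an assumption that is not in the hypotheses: you repeatedly use that $e$ and $U$ anti-commute (e.g.\ when you write $(U\pm e)^2 = 2$, when you claim $e(U-e)^{-1}=(-U-e)^{-1}e$, and when you argue $e(U-e)X_A=(U+e)X_A$ and then try to show $\overline{(U+e)X_A}=\overline{(U-e)X_A}$). The lemma only assumes that $e$ and $U$ are each odd self-adjoint unitaries; there is no relation imposed between them. Indeed the claim $\overline{(U+e)X_A}=\overline{(U-e)X_A}$ is false already for $U=e$. With the anti-commutation removed, essentially every algebraic identity in your sketch collapses, and there is no reduction to the ungraded picture of Remark~\ref{rmk:cay-grad-ungr} in the way you describe.

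The paper's argument avoids any commutation hypothesis on $e,U$. The correct identity for domain invariance is $e(U\pm e)=(e\pm U)U$, valid for any two OSUs, giving $e(U-e)X_A=(e-U)UX_A=(U-e)X_A$ directly. For self-adjointness and regularity the paper does not conjugate into an ungraded transform; instead it introduces the bounded self-adjoint contraction $F=\tfrac12\Ci_e(U)(2-Ue-eU)^{1/2}$, checks $1-F^2=\tfrac14(eU-1)(Ue-1)$, and invokes Lance's Theorem~10.4 to conclude that $F(1-F^2)^{-1/2}=\Ci_e(U)$ is self-adjoint and regular on $\overline{(U-e)X_A}$. Your reference to ``applying Lance after conjugating by $e$'' does not substitute for this, because the operator $eU$ is in general neither self-adjoint nor skew-adjoint (it is merely unitary), so there is no ungraded Cayley transform to borrow from. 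The compactness step does go through along the lines you suggest---one finds $(1+\Ci_e(U)^2)^{-1/2}=\tfrac12|U-e|$---but the derivation must use $(U-e)^{-1}(U+e)(U+e)(U-e)^{-1}=(2+eU+Ue)(2-eU-Ue)^{-1}$ rather than the anti-commutation identities you invoke.
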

\begin{proof}
It is immediate that $ \Ci_{e}(U)$ is odd.
Since $e$ and $U$ are OSUs we have $e(U\pm e) = (e\pm U)U$.
Hence  $e(U-e)X_A =(e-U) U X_A = (U-e) X_A$ showing that the domain $(U-e)X_A$
is preserved by $e$.
Furthermore, for any $\psi\in (U-e)X_A$,
$$e(U\pm e)^{-1}\psi = \big((U\pm e)e^{-1}\big)^{-1}\psi = \big(U(e\pm U)\big)^{-1}\psi = (e\pm U)^{-1}U\psi.$$
Therefore  $e$ anti-commutes with $(U+e)(U-e)^{-1}$ and hence also with 
$\Ci_{e}(U)$ (on the domain) since
$$
e\Ci_e(U)=e^2(U+e)(U-e)^{-1}=e(e+U)U(U-e)^{-1}=e(e+U)(e-U)^{-1}e
=-\Ci_e(U)e.
$$

To show that $\Ci_{e}(U)$ is self-adjoint and regular, 
we employ \cite[Theorem 10.4]{Lance}. Consider the operator 
$F = \frac{1}{2}\calC_e(U)(2-Ue-eU)^{1/2} = \frac{1}{2}e(Ue+1)(Ue-1)^{-1}(2-Ue-eU)^{1/2}$.
This operator is self-adjoint by direct computation using the 
normality of $Ue$, has
norm bounded above by 1, and also
\begin{align*}
F^2&=\frac{1}{4}(2+eU+Ue)\quad\mbox{and so}\\
1-F^2
&=\frac{1}{2}-\frac{1}{4}(eU+Ue)
=\frac{1}{4}(2-eU-Ue) 
= \frac{1}{4}(eU-1)(Ue-1)
\end{align*}
which shows that $1-F^2$ is positive. 
The operator $\Ci_{e}(U)$ is defined on the  range of 
$(U-e)$, which by the unitarity of $e$ is the same as the range of $(Ue-1)$. 
The operator $(1-F^2)^{1/2}=\frac{1}{2}|(Ue-1)|$ then has dense range equal to
$(U-e)X_A$.

We can now apply~\cite[Theorem 10.4]{Lance} which implies that the operator
$F(1-F^2)^{-1/2}$ is a densely defined, regular 
self-adjoint operator on $\ol{(U-e)X_A}$. This operator is
\begin{align*}
F(1-F^2)^{-1/2}
&=\frac{1}{2}e(Ue+1)(Ue-1)^{-1}(2-Ue-eU)^{1/2} \Big(\frac{1}{4}(2-Ue-eU)\Big)^{-1/2}\\
&=e(Ue+1)(Ue-1)^{-1} = e(U+e)(U-e)^{-1} = \Ci_{e}(U)
\end{align*}
and so we find that $\Ci_{e}(U)$ is regular and self-adjoint.

Next we compute $1+\Ci_{e}(U)^2 = 1+ \Ci_{e}(U)^* \Ci_{e}(U)$, where
\begin{align} \label{eq:Cayley_bdd_transform}
   1+ \Ci_{e}(U)^* \Ci_{e}(U) &= 1+ (U-e)^{-1}(U+e)(U+e)(U-e)^{-1} \nonumber \\
    &= 1+ (2+ eU+Ue)(2-eU-Ue)^{-1} \nonumber \\
    &= 4(2-eU-Ue)^{-1} = 4(U-e)^{-2}.
\end{align}
Therefore $(1+\Ci_{e}(U)^2)^{-1/2} = \frac{1}{2}|U-e|$, which is
compact if $U-e$ is compact. 
\end{proof}

\begin{rmk}
\label{rmk:cay-gr-ungr-2}
We again show how to recover the ungraded case. 
Take $Y_A$ ungraded and $X_A = Y_A \otimes \C^2$. Then odd self-adjoint 
unitaries $U$ take the form $U = \begin{pmatrix} 0 & V^* \\ V& 0 \end{pmatrix}$ with 
$V\in \End_A(Y)$ unitary. We then compute for $e = 1\otimes \sigma_1$,
\begin{align*}
\Ci_{e}(U) &=
\begin{pmatrix} 0 & 1 \\ 1 & 0\end{pmatrix}\begin{pmatrix} 0 & (V^*+1) \\ (V+1)& 0 \end{pmatrix}\begin{pmatrix} 0 & (V-1)^{-1} \\ (V^*-1)^{-1} & 0\end{pmatrix}  \\
  &=\begin{pmatrix} 0 & (V+1)(V-1)^{-1} \\ (V^*+1)(V^*-1)^{-1} & 0 \end{pmatrix} \\
  &= \Ci(V) \otimes \sigma_2
\end{align*}
with $\Ci(V)$ the ungraded Cayley transform of $V$. 
\hfill $\diamond$
\end{rmk}

\begin{rmk}
If $X_A$ is a balanced $\Z_2$-graded module over the non-trivially $\Z_2$-graded
algebra $A$, then it is not necessarily the case that the even and odd halves
$X_A=(X_+\oplus X_-)_A$ are isomorphic with $X_A=(X_+\oplus UX_+)_A$
and the isomorphism $U$ providing an OSU 
$\tilde{U}=(\begin{smallmatrix} 0 & U^*\\ U & 0\end{smallmatrix})$ in $\End_A(X)$.
The issue is that $\tilde{U}$ need not be adjointable.
\hfill $\diamond$
\end{rmk}

In a sense which we will make precise, 
the maps $\Cd_e$ and $\Ci_e$ are mutual inverses.
\begin{prop} 
\label{prop:Cayley_inverses}
Let $X_A$ be a countably generated and balanced graded $C^*$-module with OSU $e \in \End_A(X)$. If $T$ is an odd self-adjoint regular operator which anti-commutes with $e$ then 
$\Ci_e\circ\Cd_e(T) = T$ on $X_A$. If $U\in \End_A(X)$ is an OSU,  
then $\Cd_e\circ\Ci_e(U)$ is the restriction of  $U$ to $\overline{(U-e)X_A}$. 
\end{prop}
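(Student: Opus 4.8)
The plan is to verify the two claimed identities by direct algebraic manipulation, exploiting throughout the anti-commutation relations $eT = -Te$ (equivalently $eU = $ twisted conjugate of $Ue$) already used in Lemmas~\ref{lem-Cd1} and \ref{lem-Cd2}, together with the fact that $(T\pm e)^2 = 1+T^2$ is invertible so that all the bounded inverses in sight exist.

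First I would prove $\Ci_e\circ\Cd_e(T) = T$. By Lemma~\ref{lem-Cd1}, $U := \Cd_e(T) = e(T+e)(T-e)^{-1}$ is an OSU, and by Lemma~\ref{lem-Cd1} we already know $U - e = 2(T-e)^{-1}$, so $\Dom(\Ci_e(U)) = (U-e)X_A = (T-e)^{-1}X_A = \Dom(T)$; this is the key point that the domains match exactly. It then remains to compute $e(U+e)(U-e)^{-1}$ and check it equals $T$ on $\Dom(T)$. Using $U - e = 2(T-e)^{-1}$ and $U + e = e(T+e)(T-e)^{-1} + e = e\big((T+e) + (T-e)\big)(T-e)^{-1} = 2eT(T-e)^{-1}$ (here $e(T-e)^{-1} = -(T+e)^{-1}e$ from anti-commutativity is what lets me push $e$ through), one gets $(U-e)^{-1} = \tfrac12 (T-e)$ on the appropriate range, hence $e(U+e)(U-e)^{-1} = e \cdot 2eT(T-e)^{-1} \cdot \tfrac12(T-e) = e^2 T = T$. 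One should be a little careful about the order of composition with the unbounded $(U-e)^{-1}$, but since everything is happening on $\Dom(T) = \ran(T-e)^{-1}$ this is routine.

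Next I would prove the other composite. Given an OSU $U$, set $S := \Ci_e(U) = e(U+e)(U-e)^{-1}$, a self-adjoint regular operator on $Y := \overline{(U-e)X_A}$ anti-commuting with $e$ by Lemma~\ref{lem-Cd2}. Then $\Cd_e(S) = e(S+e)(S-e)^{-1} \in \End_A(Y)$, and I must show this equals $U|_Y$. The cleanest route is to verify $(S - e)\psi = 2(U-e)^{-1}\psi$ for $\psi$ in the relevant dense domain, exactly mirroring the identity $U - e = 2(T-e)^{-1}$ from the other direction: from $S = e(U+e)(U-e)^{-1}$ one computes $S - e = e\big((U+e) - e(U-e)\big)(U-e)^{-1}$, and using $e(U-e) = (e-U)U = -(U-e)U$ (valid because $U,e$ are OSUs) this becomes $e(U+e)(U-e)^{-1} + (U-e)U(U-e)^{-1}$; simplifying with $e(U+e) = (e+U)U = (U+e)U$ and collecting gives $S - e = 2U(U-e)^{-1}$ up to the anti-commutation bookkeeping, and similarly $S + e = 2(U-e)^{-1}$, so $(S-e)(S+e)^{-1} = U$ on $Y$ and hence $e(S+e)(S-e)^{-1} = U$ after moving $e$ through one more time. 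Alternatively, and perhaps more transparently, one invokes the already-proved first identity: $S$ anti-commutes with $e$, so $\Cd_e(S)$ is an OSU, and applying $\Ci_e\circ\Cd_e = \mathrm{id}$ to $S$ shows $\Ci_e(\Cd_e(S)) = S = \Ci_e(U)$; since $\Ci_e$ is injective on OSUs whose domain contains the dense set $(U-e)X_A$ — which is exactly what pins down the module $Y$ — one concludes $\Cd_e(S) = U|_Y$.

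I expect the main obstacle to be purely bookkeeping: keeping straight where $e$ can be commuted past $(U-e)^{-1}$ versus $(U+e)^{-1}$ (the sign flip between $e(U-e)^{-1}$ and $(U+e)^{-1}e$), and making sure every manipulation involving the unbounded operators $(U-e)^{-1}$ or $(S-e)^{-1}$ is applied only to elements of the genuinely invariant dense domain rather than formally. There is no conceptual difficulty — it is the unbounded analogue of the elementary fact that the ordinary Cayley transform and its inverse undo each other, restricted to the image submodule $\overline{(U-e)X_A}$ in the non-surjective direction, exactly as in Proposition~\ref{prop:Cayley_properties} and the discussion following it. I would present the computation in the $\Ci_e\circ\Cd_e$ direction in full and then deduce the $\Cd_e\circ\Ci_e$ direction from it via injectivity of $\Ci_e$, to avoid repeating near-identical algebra.
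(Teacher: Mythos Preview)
Your first direction ($\Ci_e\circ\Cd_e(T)=T$) is correct and is essentially the computation the paper does. The paper's trick, however, is to observe that \emph{both} compositions have the same formal shape: setting $V=e(x+e)(x-e)^{-1}$ for a generic $x$ (either $T$ or $U$), one computes $e(V+e)(V-e)^{-1}$ once, using $V-e=2(x-e)^{-1}$ and $V+e=2ex(x-e)^{-1}$, and gets $x$ in both cases. This avoids redoing the algebra and is what you should present.

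In your second direction the direct computation contains slips: the factorisation $S-e=e\big((U+e)-e(U-e)\big)(U-e)^{-1}$ is $S-1$, not $S-e$, and your stated values of $S\pm e$ are swapped (in fact $S-e=2(U-e)^{-1}$ and $S+e=2eU(U-e)^{-1}$, exactly as for $T$). With the swapped values you would end up with $eU$, not $U$. These are repairable by copying the first computation verbatim with $U$ in place of $T$, restricted to $Y=\overline{(U-e)X_A}$.

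Your proposed alternative via injectivity of $\Ci_e$ is not adequately justified: to conclude $\Cd_e(S)=U|_Y$ from $\Ci_e(\Cd_e(S))=\Ci_e(U)$ you would need $\Ci_e(U|_Y)$ to coincide with $\Ci_e(U)$ as operators (same domain, same action), which requires $(U-e)Y=(U-e)X_A$, and you have not argued this. Drop this route and do the unified computation instead.
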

\begin{proof}
Let $x$ be a right-$A$-linear operator on $X_A$ and $Y_A\subset X_A$ be a submodule on which $(x-e)^{-1}$ is well-defined. 
Apart from their domains, both expressions $\Ci_{e} \circ\Cd_e(x) $ and $\Cd_e\circ\Ci_e(x)$ are equal to 
$e\big(e(x+e)(x-e)^{-1}+e\big)\big( e(x+e)(x-e)^{-1}-e\big)^{-1}$.
As $(x+e)(x-e)^{-1}-1 = 2e (x-e)^{-1}$ the above expression is well-defined on $Y_A$ and given by 
$$
e\big(e(x+e)(x-e)^{-1}+e\big)\big( e(x+e)(x-e)^{-1}-e\big)^{-1} = x.
$$
For the first statement we substitute $x=T$ and $Y_A =X_A$, 
which is possible as $T$ anti-commutes with $e$. For the second statement we take $x=U$ and
$Y_A = \overline{(U-e)X_A}$. 
\end{proof}

As in the ungraded case, for
$U \in \End_A(X)$ unitary, $\Cd_e \circ \Ci_e(U)$ is the 
restriction of $U$ to the $C^*$-module 
$\ol{(U-e)X_A}$, which need not recover all of $X_A$ in general. 
However, we have a graded analogue of Lemma \ref{lem:circle} 
describing the $K$-theory classes.

\begin{lemma} \label{lem:graded-circle}
Let $A$ be a balanced graded unital $C^*$-algebra 
and $U,\,V \in A$  odd self-adjoint unitaries. Define the ideal 
$J = \ol{A(U-V)A}$. Then $U,\,V$ defines a class in 
$[U]^J-[V]^J \in DK(J)$, and with $\iota: J \hookrightarrow A$ the inclusion
$\iota_*([U]^J-[V]^J)=[U]-[V]\in DK(A)$.
\end{lemma}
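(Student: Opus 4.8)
The plan is to follow the template of Lemma~\ref{lem:circle} but adapted to the van Daele picture, where unitaries are replaced by pairs of odd self-adjoint unitaries and $K_1$-triviality is replaced by homotopy of OSUs. First I would observe that, by construction, $U - V \in J$, so $U \equiv V \pmod{J}$ and hence the pair $(U,V)$ descends to a class $[U]^J - [V]^J$ in the relative group $DK(J^\sim, J^\sim/J) = DK(J)$; here I should check that $U,V$ genuinely lie in $J^\sim$ rather than just in $A$, which follows because $U = V + (U-V)$ with $V \in J^\sim$ (as $V^2 = 1$ forces $V$ into the unitization of any ideal once we mod out, but more carefully: $V$ and $U$ are OSUs of $A$, and we are using that $A$ contains $J^\sim$ as a subalgebra since $A$ is unital). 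So the relative class makes sense and the map $\iota_* : DK(J) \to DK(A)$ sends it to $[U]-[V] \in DK(A)$ essentially by definition of the excision/inclusion map.

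Next I would verify that this is well-defined, i.e.\ that $[U]^J - [V]^J$ depends only on the pair and gives a genuine element of $DK(J)$. The subtlety is that $J$ need not be balanced, so strictly one works with $J^\sim \hot \Cl_{1,1}$ as in Definition~\ref{defn:vD-K} and Equation~\eqref{eq-DK-Roe}; I would pass $[U]-[V]$ through the isomorphism \eqref{eq:ubiquitous-isomorphism} of Lemma~\ref{lem-Cl} to land in $DK(A \hot \Cl_{1,1})$, note that the difference of the resulting OSUs lies in $M_2(J \hot \Cl_{1,1})$ precisely because $U - V \in J$, and conclude the relative class lies in $DK(J^\sim \hot \Cl_{1,1}, \cdot) \cong DK(J)$. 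The statement $\iota_*([U]^J - [V]^J) = [U]-[V]$ is then the assertion that the composite $DK(J) \hookrightarrow DK(A)$ recovers the original difference, which is immediate from the interpretation $DK(J) = DK(A, A/J)$ restricted to representatives whose difference lies in $J$ — this is exactly the excision picture of Proposition~\ref{prop:excision} applied to the ideal $J \subseteq A$ (using that $A$ is unital, so no passage to $A^\sim$ is needed).

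The one genuinely non-trivial point — the analogue of the $K_1$-argument in Lemma~\ref{lem:circle} that lifts a null-homotopy through the quotient — is showing that the map $[U]^J - [V]^J \mapsto [U]-[V]$ is the \emph{inclusion-induced} map and not something ad hoc; equivalently, that if $[U]-[V]$ is already defined directly over $A$ (not via an ideal) then it agrees with pushing forward the relative class. For this I would use that $U \equiv V$ modulo $J$ means the images $\bar U = \bar V$ in $A/J$, so $[\bar U] - [\bar V] = 0$ in $DK(A/J)$, and then argue — exactly as in Lemma~\ref{lem:circle}, replacing "unitary homotopic to $1$" by "OSU homotopic to the base-point OSU $\bar V$ via even unitaries" — that one can conjugate $U$ by an even unitary $w \in A^\sim$ lying in the connected component of $1$, whose image over $A/J$ trivializes the pair, so that $wUw^* - V \in J$ on the nose while $[wUw^*] = [U]$ in $DK(A)$. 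The conjugation by an even unitary preserves the OSU condition and the real structure (if one is present), so this works verbatim in the Real case.

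I expect the main obstacle to be the bookkeeping around balancedness: $J$ is typically non-unital and non-balanced, so every statement about "a class in $DK(J)$" must be routed through $J^\sim \hot \Cl_{1,1}$, and one must check that the $\Cl_{1,1}$-stabilization in \eqref{eq:ubiquitous-isomorphism} is compatible with the inclusion $\iota$ and with the relative picture $DK(J) = DK(A,A/J)$. None of this is deep, but it is the part where sign/grading errors of the kind the acknowledgements allude to are easy to make. Everything else is a direct transcription of the ungraded argument.
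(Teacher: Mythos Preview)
Your overall strategy matches the paper's, but two of your intermediate claims are false and the conjugation you propose misses the point.

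First, in your opening paragraph you try to argue that $U,V\in J^\sim$. This is simply not true in general: $J^\sim=J+\C\cdot 1$ inside $A$, and an arbitrary OSU $V\in A$ has no reason to lie there. Your parenthetical ``$V^2=1$ forces $V$ into the unitisation'' does not make sense, and the fallback ``$A$ contains $J^\sim$'' is the wrong direction. You should instead work in the relative group $DK(A,A/J)$ from the outset, where only $U-V\in J$ is required, and then invoke excision (Proposition~\ref{prop:excision}).

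Second, your claim in paragraph~2 that after applying the isomorphism~\eqref{eq:ubiquitous-isomorphism} ``the difference of the resulting OSUs lies in $M_2(J\hot\Cl_{1,1})$'' is wrong: the image of $[U]-[V]$ is $\bigl[\begin{smallmatrix}U&0\\0&V\end{smallmatrix}\bigr]-[\sigma_1]$, and $\begin{smallmatrix}U&0\\0&V\end{smallmatrix}-\sigma_1=\begin{smallmatrix}U&-1\\-1&V\end{smallmatrix}$ has the scalar $-1$ off the diagonal, so it does \emph{not} lie in $M_2(J)$. You need an additional conjugation to fix this.

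Third, your paragraph~3 conjugation is aimed at the wrong target. You propose to find an even unitary $w$ so that $wUw^*-V\in J$; but $U-V\in J$ already, so $w=1$ works and nothing is gained. The actual obstacle is not that the \emph{difference} fails to lie in $J$ --- it is that neither representative individually lies in $J^\sim\hot\Cl_{1,1}$. The paper resolves this at the $\Cl_{1,1}$-stabilised level: since $q(U)=q(V)$ in $A/J$, the image $q\bigl(\begin{smallmatrix}U&0\\0&V\end{smallmatrix}\oplus\sigma_1^{\oplus n}\bigr)$ is (after stabilisation) conjugate to $\sigma_1^{\oplus(n+1)}$ by an even unitary $w$ over $A/J$ in the identity component; lifting $w$ to $\tilde w$ over $A$ and setting $W=\tilde w^{-1}(\cdots)\tilde w$ gives $W\equiv\sigma_1^{\oplus(n+1)}\pmod{J}$. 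The point is that $\sigma_1$ is a \emph{scalar} matrix, hence lies in $J^\sim\hot\Cl_{1,1}$, so $W$ does too. Alternatively one can use the explicit conjugation $w=\tfrac{1}{\sqrt 2}(1-V\hot\sigma_1)$ from the paragraph preceding Proposition~\ref{prop:excision}, which directly yields $w\bigl(\begin{smallmatrix}U&0\\0&V\end{smallmatrix}\bigr)w^*-\sigma_1\in J\hot\Cl_{1,1}$.

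Once you have a representative genuinely in $J^\sim\hot\Cl_{1,1}$, your final step --- that $\iota_*$ just views it back in $A$ and undoes the conjugation --- is correct.
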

\begin{proof}
We see that $U = V$ modulo $J$ and so $q_{\ast}([U]-[V]) \in DK(A/J)$ will be 
trivial for $q: A \to A/J$ the quotient map.
By \cite[Proposition 2.3]{vanDaele1} there is an 
even unitary $w$ over 
$A/J$ in
the connected component of the identity such that
$$
q\left(\begin{pmatrix} U & 0\\ 0 & V \end{pmatrix}\oplus\sigma_1\oplus\cdots\oplus\sigma_1\right)
=w\left(\begin{pmatrix} 0 & 1\\ 1 & 0\end{pmatrix}\oplus\sigma_1\oplus\cdots\oplus\sigma_1\right) w^{-1}.
$$
Since $w$ is in the connected component of the identity, it lifts to 
an even unitary $\tilde{w}$ in $A$ connected to the identity and such that
\begin{equation}
W:=\tilde{w}^{-1}\left(\begin{pmatrix} U & 0\\ 0 & V\end{pmatrix}\oplus\sigma_1^{\oplus n}\right)\tilde{w}
\label{eq:jay-unitary}
\end{equation}
is equal to $\sigma_1^{\oplus (n+1)}$ modulo $J\hox\Cl_{1,1}$.  
Hence the unitary $W$ of \eqref{eq:jay-unitary} is a unitary over
$J^\sim\hox\Cl_{1,1}$.
Now we have
$$
[W]-[\sigma_1^{\oplus (n+1)}]
=\left[\begin{pmatrix} U & 0\\ 0 & V\end{pmatrix}\right]-[\sigma_1]\in DK(A)
$$
because $\tilde{w}$ is an even unitary over $A$ connected to the identity.
Since $W$  is a unitary over
$J^\sim\hox\Cl_{1,1}$, we may define a class in $DK(J)$ 
by $[W]^J-[\sigma_1^{\oplus (n+1)}]^J$, where the ${}^J$ just
indicates that we regard these as unitaries 
over $J^\sim\hox\Cl_{1,1}$. Applying the inclusion map
\[
\iota_*([W]^J-[\sigma_1^{\oplus (n+1)}]^J)=[W]-[\sigma_1^{\oplus (n+1)}]
=\left[\begin{pmatrix} U & 0\\ 0 & V\end{pmatrix}\right]-[1\otimes \sigma_1] 
= [U]-[V] \in DK(A)
\]
where we have applied the isomorphism from \eqref{eq:ubiquitous-isomorphism} in 
the last equality.
\end{proof}

Lastly, we note that the graded Cayley transforms do not involve any complex 
structure and therefore are valid also for operators on real Hilbert modules.

\subsection{The isomorphism of $DK$ and $KK$}
\label{subsec:Cayley_DK_iso}

Here we use our results on the graded Cayley transform to construct an explicit isomorphism 
between the van Daele $K$-group $DK(A)$ and $KK$-group $KK(\Cl_{1,0},A)$. To cover the complex and real case simultaneously, 
we work with $KKR$-theory and real structures.

Using~\cite{BJ} and Remark \ref{rk:KK(Cl_1,A)_normalisation}, 
we represent any class in $KKR(\Cl_{1,0},A)$ 
by an unbounded Kasparov module $(e, X_A,T)$, where 
$X_A$ is a countably generated and balanced graded Real $C^*$-module,
$e^2$ acts as the identity and $e$ anti-commutes with $T$.

\newcommand{\Chris}{\mathfrak C}

Let us first describe a map $\Chris:\,DK(A) \to KKR(\Cl_{1,0}, A)$. 

\begin{lemma}
\label{lem:Roe_to_KK} 
Let $A$ be a unital and balanced graded algebra with $V, \, W \in M_n(A)$ OSUs. 
The inverse Cayley transform induces a homomorphism
$\Chris:\,DK(A, \mathfrak{r}_A  )\to KKR(\Cl_{1,0},A)$,
$$
 \Chris([V] - [W]) = \big[ (W, \, \ol{(V-W)A^n_A}, \, \Ci_{W}(V)) \big],
 \quad \Ci_{W}(V) = {W}(V+W)(V-W)^{-1}.
$$
If $A$ is non-unital and weakly balanced graded, and $B$ is any balanced graded unital algebra containing 
$A$ as a graded ideal, then we can use
the Cayley transform to define a homomorphism $\Chris:\,DK(B,B/A, \mathfrak{r}_A  )\to KKR(\Cl_{1,0},A)$
$$
  \Chris([V]-[W]) = \big[ ( W, \, \ol{(V-W)B^{n}_A}, \, \Ci_{W}(V)) \big]
  = \big[ ( W, \, \ol{(V-W)A^{n}_A}, \, \Ci_{W}(V)) \big].
$$
If $A$ is not balanced nor weakly balanced, let $\xi_{\Cl_{1,1}} = \cdot \hox [( \Cl_{1,1}, {\C^2}_{\C}, 0 )]$ be the isomorphism given by the
external Kasparov product with the 
class of the Morita equivalence $( \Cl_{1,1}, \C^2_{\C}, 0 )$.
Then given OSUs $V, W \in M_n(A^\sim \hox \Cl_{1,1})$, 
the Cayley transform defines a homomorphism $\Chris:\,DK(A,\mathfrak{r}_A  )\to KKR(\Cl_{1,0},A)$
\begin{align}
  \Chris([V]-[W]) &= \xi_{\Cl_{1,1}}\big[ ( W, \, \ol{(V-W)(A\hox\Cl_{1,1})^{n}}_{A\hox{\Cl_{1,1}}}, \, \Ci_{W}(V) ) \big]\nonumber\\
  &= \big[ ( W, \, \ol{(V-W)(A\ox\C^2)^{n}_A}, \, \Ci_{W}(V)) \big].\label{eq:needed-label}
\end{align}
\end{lemma}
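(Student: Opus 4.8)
The plan is to treat the three cases in order of increasing generality, in each one checking that (a) the displayed triple is a Real unbounded Kasparov module for $\Cl_{1,0}$ and $A$ of the normalised shape of Remark~\ref{rk:KK(Cl_1,A)_normalisation}, (b) its class in $KKR(\Cl_{1,0},A)$ depends only on $[V]-[W]\in DK(A)$, and (c) the assignment is additive. Part (a) is essentially Lemma~\ref{lem-Cd2}: representing the generator of $\Cl_{1,0}$ by the OSU $W$ (so $W^2=1$) and taking $\calA=\Cl_{1,0}$, the operator $\Ci_W(V)$ is odd, self-adjoint and regular, it anticommutes with $W$, and in the unital balanced case $V-W\in M_n(A)=\End_A^0(A^n)$ is compact, whence $(1+\Ci_W(V)^2)^{-1/2}=\tfrac12|V-W|$ is a compact endomorphism of $\ol{(V-W)A^n_A}$. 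Since the graded Cayley transform is built only from $*$-algebra operations and inversion, it respects the real structure $\rs_A$, so the cycle is genuinely Real. Part (c) is immediate: $\Chris([V\oplus V']-[W\oplus W'])$ is represented by the direct sum of the cycles attached to $[V]-[W]$ and $[V']-[W']$, hence represents the sum of their classes.

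The content of the lemma is part (b), which I would prove by mirroring the argument in Theorem~\ref{thm:complex_cayley}. First, stabilising by a common OSU $Z$ changes nothing: $\ol{((V-W)\oplus 0)A^{n+m}_A}$ is canonically $\ol{(V-W)A^n_A}$, on which $\Ci_{W\oplus Z}(V\oplus Z)$ restricts to $\Ci_W(V)$ and $W\oplus Z$ restricts to $W$. Second, given paths $V(t),W(t)$ of OSUs in $M_n(A)$ with $V(t)-W(t)\in M_n(A)$ for every $t$, I would assemble the right $A\hot C([0,1])$-module $\mathcal X=\{f\colon[0,1]\to A^n:\,f(t)\in\ol{(V(t)-W(t))A^n}\}$ with $W(\cdot)$ acting fibrewise as the Clifford generator and fibrewise operator $(\Ci_\bullet f)(t)=\Ci_{W(t)}(V(t))f(t)$; as in Theorem~\ref{thm:complex_cayley} this is a Real unbounded Kasparov module over $A\hot C([0,1])$, so evaluation at $0$ and $1$ shows the two endpoint cycles give the same class. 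Two differences agree in $DK(A)$ exactly when, after stabilising by a common OSU, the numerators and the denominators can each be joined by homotopies of OSUs (the Grothendieck-group bookkeeping behind Definition~\ref{defn:vD-K}), so these two invariances deliver well-definedness, and naturality makes $\Chris$ a homomorphism.

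For the weakly balanced non-unital case I would first check $\ol{(V-W)B^n_A}=\ol{(V-W)A^n_A}$: since $V-W\in M_n(A)$ and $A$ is a graded ideal in $B$ we have $(V-W)B^n\subseteq A^n$, and writing $V-W=\lim_\lambda (V-W)(u_\lambda\otimes 1_n)$ for an approximate unit $(u_\lambda)$ of $A$ gives $(V-W)B^n\subseteq\ol{(V-W)A^n}$. The module is then a countably generated right $A$-module with $V-W\in\End_A^0(A^n)$ compact, so part (a) goes through, and well-definedness on $DK(B,B/A)$ follows from the homotopy argument above, now with homotopies through $\mathrm{OSU}(M_n(B))$ keeping the difference in $M_n(A)$; Proposition~\ref{prop:excision} identifies $DK(B,B/A)$ with $DK(A)$. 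Independence of the auxiliary $B$ is seen by comparing an arbitrary $B$ with the canonical $A^{\sim e}$ of Lemma~\ref{lem:unitisation}, since the cycle only involves the right $A$-module $\ol{(V-W)A^n}$, the operator $\Ci_W(V)$, and $W$, all intrinsic to $A$. Finally, when $A$ is not even weakly balanced, $A\hot\Cl_{1,1}$ is, and the previous case produces a homomorphism from $DK(A)=DK(A^\sim\hot\Cl_{1,1},(A^\sim\hot\Cl_{1,1})/(A\hot\Cl_{1,1}))$ into $KKR(\Cl_{1,0},A\hot\Cl_{1,1})$; post-composing with the Morita isomorphism $\xi_{\Cl_{1,1}}$ and computing its effect on cycles — tensoring the coefficient module $(A\hot\Cl_{1,1})^n$ down over $\Cl_{1,1}$ to $(A\otimes\C^2)^n$ while leaving $W$ and $\Ci_W(V)$ untouched — yields formula~\eqref{eq:needed-label}.

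I expect the genuinely fiddly part to be not the algebra of the Cayley transform but the non-unital and non-balanced reductions: one must keep $V(t)-W(t)$ inside the ideal $M_n(A)$ at \emph{every} $t$ along a homotopy rather than just at the endpoints, confirm that the resulting class in $KKR(\Cl_{1,0},A)$ really is independent of the auxiliary unital algebra $B$, and check that the identification $DK(A)\cong DK(A\hot\Cl_{1,1})$ used in the last case is compatible with $\xi_{\Cl_{1,1}}$. That $\Chris$ is moreover an isomorphism is a further, and considerably more delicate, matter taken up in the results that follow.
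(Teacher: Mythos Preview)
Your approach is essentially the same as the paper's: invoke Lemma~\ref{lem-Cd2} to see the triple is a Real Kasparov module, build a Kasparov module over $A\otimes C([0,1])$ to handle homotopies, and observe that direct sums give additivity. Two points of difference are worth noting.

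First, the paper constructs the homotopy using the \emph{bounded} transform $F_t=\tfrac12\,W(V_t+W)(V_t-W)^{-1}|V_t-W|$ from Equation~\eqref{eq:Cayley_bdd_transform}, which is manifestly norm-continuous in $t$; this sidesteps the question of whether your fibrewise unbounded family $t\mapsto\Ci_{W(t)}(V(t))$ assembles to a self-adjoint \emph{regular} operator on $\mathcal X_{A\otimes C([0,1])}$. Second, your description of equality in $DK(A)$ (``after stabilising by a common OSU, the numerators and the denominators can each be joined by homotopies'') is not the Grothendieck relation: $[V]-[W]=[V']-[W']$ means $V\oplus W'\oplus Z\sim V'\oplus W\oplus Z$ for some $Z$, which does not split into separate homotopies of numerators and denominators. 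However, the ingredients you actually establish --- stabilisation invariance, homotopy invariance in both slots, additivity on representatives, and $\Chris(x,x)=0$ --- do combine to give well-definedness via the cross-sum argument, so this is a slip in the reduction rather than a gap in the proof. The paper itself only varies $V_t$ with $W$ fixed and treats only the unital balanced case explicitly, declaring the rest ``only notationally more complex''; your more careful handling of the non-unital and non-balanced cases is correct.
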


Observe that the map \eqref{eq:needed-label} encompasses all cases stated in 
the proposition.

\begin{proof}
We deal with the unital and balanced case, 
as the other cases are only notationally more complex.

As $V$ and $W$ are odd and Real, $\Ci_{W}(V)$ is also  
odd and $\Ci_{W}(V)^\mathfrak{r} = \Ci_{W}(V)$. We note that 
$W(V \pm W) = (W \pm V)V$, so $\ol{(W - V)WA^n_A} = \ol{(V-W)A^n_A}$. 
Thus the action of both the generator of $\Cl_{1,0}$ and 
$V$ preserve the $A$-module and the domain of $\Ci_W(V)$. 
Applying Lemma \ref{lem-Cd2}, $\Ci_W(V)$ is self-adjoint, 
regular, anti-commutes with 
the $\Cl_{1,0}$-action and $(1+\Ci_W(V)^2)^{-1/2}$ is compact. 
Hence we obtain a Real (unbounded) Kasparov module so 
all that is left is to make sure that
the map $\Chris$ is well-defined.

Suppose that we have a continuous path of 
odd self-adjoint unitaries  
$[0,1]\ni t\mapsto V_t,$ with $[V_t]-[W]\in \Ker q_*$ and 
$V^\mathfrak{r}_t = V_t$ for all $t$. 
The continuity of $V_t$ ensures that the pointwise $C^*$-module 
$\ol{(V_t - W)A^n_A}$ can be extended to a $A \otimes C([0,1])$-module, 
$\overline{(V_\bullet - W)A^n}_{A\ox C([0,1])}$, where 
the real structure 
on $A\otimes C([0,1]) \cong C([0,1],A)$ is such that $a^\rs(t) = (a(t))^{\rs_A}$.  
Recalling Equation \eqref{eq:Cayley_bdd_transform}, the 
bounded transform of $\Ci_{W}(V_t)$ is given by 
$$
   F_t = \frac{1}{2} W(V_t + W)( V_t - W)^{-1} |V_t - W|
$$
for all $t$. Once again the continuity of $V_t$ ensures that 
$\{F_t\}_{t\in[0,1]}$ is a well-defined and self-adjoint operator $F_\bullet$ on 
$\overline{(V_\bullet - W)A^n}_{A\ox C([0,1])}$.
Assembling this information and using the pointwise properties 
of $F_t$, we obtain a Kasparov module
$$
\big(  W, \, \overline{(V_\bullet - W)A^n}_{A\ox C([0,1])}, \, F_\bullet \big).
$$
We therefore obtain a homotopy in $KKR$ and, hence, $\Chris$ is well-defined. 
It is easily seen that $\Chris$ respects direct sums and so is a homomorphism.
\end{proof}

\begin{rmk}
Let us briefly note that if $WV+VW=0$, so  $W$ and $V$ are homotopic as OSUs, 
then the resulting Kasparov module
$(W, \, \ol{(V-W)A^n_A}, \, \Ci_{W}(V))$ is degenerate. We first observe that 
if $W$ and $V$ anti-commute, $(W-V)^{-1} = \tfrac{1}{2}(W-V)$ and so 
$$
  \Ci_W(V) = \frac{1}{2}W(W+V)  (V-W) = \frac{1}{2}(V - WVW) = V.
$$
Hence the Kasparov module simplifies to $(W, \, \ol{(V-W)A^n_A}, \, V)$ which 
is clearly degenerate.
\hfill $\diamond$
\end{rmk}

To define a map from $KK$ to $DK$, we need to know
that our construction is compatible with Morita invariance. 
To work with explicit cycles, we also need to consider
$C^*$-modules that are not full. The first issue arises
because for a Kasparov module $(e,X_A,T)$,
we most easily construct OSUs in $\End_A(X)$ 
and need to get back to the coefficient algebra.
This is the Morita invariance requirement.

If $X$ is not full, then the Cayley transformation will only have range in $J_X=\ol{{\rm span}(X|X)_A}$. 
Hence we also need to understand the dependence of the Cayley transformation
on the inclusion $J_X\hookrightarrow A$. The next two lemmas address these points.

\begin{lemma}
\label{lem:Cay-iota}
Let $A$ be a unital and balanced graded algebra with $V, \, W \in M_n(A)$ OSUs, and let $J=\overline{A^n(V-W)A^n}$. 
Then we have classes $[V]^J-[W]^J\in DK(J)$ and
$[V]-[W]\in DK(A)$ related by $\iota_*([V]^J-[W]^J)=[V]-[W]$
where $\iota:\,J\to A$ is the inclusion. 

Letting
$\Chris^A:\,DK(A, \mathfrak{r}_A  )\to KKR(\Cl_{1,0},A)$ and
$\Chris^J:\,DK(A, A/J) )\to KKR(\Cl_{1,0},J)$
be the homomorphisms of Lemma \ref{lem:Roe_to_KK}, we have
$$
 \Chris^A \circ \iota_*([V]^J - [W]^J) 
 =\Chris^A([V] - [W])= \iota_* \circ \Chris^J([V]^J - [W]^J). 
$$
\end{lemma}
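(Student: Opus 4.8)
The plan is to verify the three displayed equalities one at a time, reading off each from the structure of the Cayley cycle in Lemma~\ref{lem:Roe_to_KK} together with Lemma~\ref{lem:graded-circle}. The key observation is that the inverse Cayley transform $\Ci_W(V) = W(V+W)(V-W)^{-1}$, together with its domain $\ol{(V-W)A^n_A}$, depends only on the product $(V-W)$ and the OSU $W$, and that all entries lie in the subalgebra $J^\sim$ (since $V-W\in M_n(J)$ by definition of $J$, while $W$ sits in the balanced unitisation). So the \emph{same formula} $\Ci_W(V)$ represents a $KKR(\Cl_{1,0},J)$-cycle and, after applying the inclusion $\iota$ on the coefficient algebra, a $KKR(\Cl_{1,0},A)$-cycle; the underlying Hilbert modules are $\ol{(V-W)A^n}$ as a $J$-module and $\ol{(V-W)A^n}\ox_J A$ as an $A$-module respectively.

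First I would recall from Lemma~\ref{lem:graded-circle} that $[V]^J-[W]^J\in DK(J)$ is well-defined and satisfies $\iota_*([V]^J-[W]^J)=[V]-[W]$ in $DK(A)$; this is exactly the first stated relation, so the middle expression $\Chris^A\circ\iota_*([V]^J-[W]^J)$ and $\Chris^A([V]-[W])$ literally agree once we know $\iota_*([V]^J-[W]^J)=[V]-[W]$, giving the first equality for free. For the second equality I would compute $\iota_*\circ\Chris^J([V]^J-[W]^J)$ directly: applying $\Chris^J$ first yields the $KKR(\Cl_{1,0},J)$-class of $\big(W,\ol{(V-W)A^n}_J,\Ci_W(V)\big)$, and applying $\iota_*:\,KKR(\Cl_{1,0},J)\to KKR(\Cl_{1,0},A)$ amounts to taking the interior Kasparov product with the inclusion bimodule ${}_J(J)_A$, i.e.\ to the $A$-module $\ol{(V-W)A^n}\ox_J A$. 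The point is that this internally tensored module is canonically isomorphic to $\ol{(V-W)A^n_A}$ (the closure taken now in $A^n$), the operator $\Ci_W(V)\ox 1$ corresponds to $\Ci_W(V)$ under this isomorphism, and the generator of the $\Cl_{1,0}$-action, still acting as $W$, is unchanged. Hence $\iota_*\circ\Chris^J([V]^J-[W]^J)$ is represented by the same cycle $\big(W,\ol{(V-W)A^n_A},\Ci_W(V)\big)$ that defines $\Chris^A([V]-[W])$, which is the second equality.

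The main obstacle I anticipate is making the identification $\ol{(V-W)A^n}\ox_J A\cong\ol{(V-W)A^n_A}$ precise, because $J$ need not be unital and the module $(V-W)A^n$ need not be a full $J$-module; one must check that the $J$-valued inner product tensored up to $A$ recovers the $A$-valued inner product on $(V-W)A^n\subset A^n$ and that the closures match (no collapse of the algebraic tensor product under the $A$-valued form, and no extra completion needed). This is essentially the statement that for an ideal $J\triangleleft A$ and a right $J$-module $Y$ inside $A^n$, one has $Y\ox_J A\cong \ol{YA}$ with the obvious maps — a routine but slightly delicate $C^*$-module computation using that $J$ acts nondegenerately on $Y$. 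Everything else — oddness, Realness, self-adjointness and regularity of $\Ci_W(V)$, compactness of the resolvent — is inherited verbatim from Lemmas~\ref{lem-Cd2} and~\ref{lem:Roe_to_KK}, and the fact that $\Chris$ is a homomorphism means it suffices to treat a single generator $[V]-[W]$.
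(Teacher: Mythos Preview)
Your proposal is correct and follows the same route as the paper: invoke Lemma~\ref{lem:graded-circle} for the relation $\iota_*([V]^J-[W]^J)=[V]-[W]$, and then observe that the Kasparov cycles defining $\Chris^A([V]-[W])$ and $\iota_*\circ\Chris^J([V]^J-[W]^J)$ are given by the same explicit formula. The paper dispatches the second part in a single sentence (``true by the construction of the Kasparov modules''), whereas you spell out the module identification $\ol{(V-W)A^n}\ox_J A\cong\ol{(V-W)A^n_A}$; this is indeed routine here since $A$ is unital and the $J$-valued and $A$-valued inner products on $(V-W)A^n$ agree under the inclusion $J\hookrightarrow A$, so your anticipated obstacle is not a genuine difficulty.
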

\begin{proof}
The first statements are proved in Lemma \ref{lem:graded-circle}.
The subsequent equalities are true by the
construction of the Kasparov modules. 
\end{proof}

\begin{lemma} \label{lem:Cay-zeta}
Let $X_A$ be a balanced graded Real $C^*$-module with 
${J_X}=\ol{{\rm span}(X|X)_A}$. The  maps 
$\Chris^{{J_X} }: DK({J_X}) \to KKR(\Cl_{1,0}, {J_X})$ and  
$\Chris^{\End_A^0(X)} : DK(\End_A^0(X)) \to KKR(\Cl_{1,0}, \End_A^0(X))$ 
are such that 
$$
  \Chris^{{J_X}} \circ\zeta_X=\zeta_{X}^{KK}\circ\Chris^{\End^0_A(X)}
$$
where $\zeta_X: DK(\End_A^0(X))\xrightarrow{\simeq} DK({J_X})$ is the 
isomorphism of Equation \eqref{eq:another-fucking-iso} and 
$\zeta_{X}^{KK} = \cdot \hox_{\End^0_A(X)} \big[( \End_A^0(X), X_{J_X}, 0) \big]$ the 
Morita isomorphism in $KK$.
\end{lemma}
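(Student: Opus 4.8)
The plan is to apply both composites to a class $[U]-[V]\in DK(\End^0_A(X))$ and unwind each to the same concrete Kasparov module; the only genuine work is to see that interior tensoring with the Morita bimodule and the Cayley construction commute. First I would invoke the reduction in the proof of Lemma~\ref{lem:DK-morita}: every class $[U]-[V]$ is homotopic to one for which $U-V$ is an odd self-adjoint finite rank operator. Since $\Chris$ is homotopy invariant (Lemma~\ref{lem:Roe_to_KK}) and $\zeta_X$, $\zeta^{KK}_X$ and $\Chris^{J_X}$ all respect homotopies, this reduction is harmless, so I assume $U-V$ is finite rank and hence that the relevant Cayley modules are well-behaved.

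For the left-hand composite, Lemma~\ref{lem:DK-morita} presents $\zeta_X([U]-[V])$ as a difference of OSUs $[W_2(U\oplus Z\oplus V\oplus Z)W_2^*]-[W_2(V\oplus Z\oplus V\oplus Z)W_2^*]$ that agree modulo the finite rank operator $W_2\big((U-V)\oplus 0\oplus 0\oplus 0\big)W_2^*$. Applying $\Chris^{J_X}$ from Lemma~\ref{lem:Roe_to_KK}, the associated Cayley module $\ol{(U-V)(\,\cdot\,)}$ is carried by $W_2$ to $\ol{(U-V)X^n}$ viewed as a Hilbert $J_X$-module, and on this submodule the $\Cl_{1,0}$-generator and the Cayley operator restrict to $V$ and to $\Ci_V(U)=V(U+V)(U-V)^{-1}$ (the summands involving $Z$ and the second copy of $V$ contribute Cayley transforms on the zero module $\ol{(Z-Z)(\,\cdot\,)}=\ol{(V-V)(\,\cdot\,)}=0$). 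Because $W_2W_2^*=1$ and the graded Cayley transform is the multiplier-algebra identity $e(U+e)(U-e)^{-1}$, conjugation by $W_2$ is invisible at the level of the abstract Kasparov module, so
$$
\Chris^{J_X}\circ\zeta_X([U]-[V]) = \big[\big(V,\ \ol{(U-V)X^n}_{J_X},\ \Ci_V(U)\big)\big].
$$

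For the right-hand composite, $\Chris^{\End^0_A(X)}([U]-[V])=\big[\big(V,\ \ol{(U-V)\End^0_A(X)^n}_{\End^0_A(X)},\ \Ci_V(U)\big)\big]$, and because the bimodule $\big(\End^0_A(X),X_{J_X},0\big)$ carries the zero operator, the Kasparov product $\zeta^{KK}_X$ is computed simply by interior tensoring the module over $\End^0_A(X)$ with $X$ and keeping the operator as $\Ci_V(U)\hox 1$; this is the degenerate case of the unbounded product of \cite{BJ}. The standard identifications $\End^0_A(X)^n\otimes_{\End^0_A(X)}X\cong X^n$ and $M_n(\End^0_A(X))=\End^0_{J_X}(X^n)$ (using $\End^0_A(X)=\End^0_{J_X}(X)$) then carry $\ol{(U-V)\End^0_A(X)^n}\otimes_{\End^0_A(X)}X$ isometrically onto $\ol{(U-V)X^n}$ and the Kasparov module $\big(V\hox 1,\ \ol{(U-V)\End^0_A(X)^n}\otimes_{\End^0_A(X)}X,\ \Ci_V(U)\hox 1\big)$ onto $\big(V,\ \ol{(U-V)X^n}_{J_X},\ \Ci_V(U)\big)$. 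Thus the two composites coincide. As the graded Cayley transform involves no complex structure and $X_{J_X}$ is Real, the same computation holds verbatim in $KKR$, giving the claim.

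The step I expect to be the main obstacle is precisely this last identification: that interior tensoring with the Morita bimodule $X$ over $\End^0_A(X)$ is transparent to the Cayley construction, i.e.\ that passing to the submodule $\ol{(U-V)(\,\cdot\,)}$, inverting $U-V$ on it, and forming the (possibly unbounded) regular operator $\Ci_V(U)$ all commute with $\cdot\otimes_{\End^0_A(X)}X$. For the bounded data this is immediate: by \eqref{eq:Cayley_bdd_transform} the bounded transform of $\Ci_V(U)$ is the adjointable operator $\tfrac12 V(U+V)(U-V)^{-1}|U-V|$, built from $U$, $V$ and continuous functional calculus of $U-V$, and the functor $\cdot\otimes_{\End^0_A(X)}X$ on these operators is exactly the Morita identification $M_n(\End^0_A(X))=\End^0_{J_X}(X^n)$; the unbounded statement then follows since $\Ci_V(U)\hox 1$ is the zero-connection product operator and is therefore automatically self-adjoint and regular. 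The remaining work is routine bookkeeping to reconcile the $\Cl_{1,1}$-balancings and matrix amplifications implicit in $\zeta_X$ and $\zeta^{KK}_X$ with the statements above, using the compatibilities recorded in Lemmas~\ref{lem-Cl}, \ref{lem:DK-morita}, \ref{lem:Cay-iota} and Corollary~\ref{cor:rel-morita}; overall the argument is of the same ``direct calculation and simple homotopy'' character as the one underlying Theorem~\ref{thm:complex_cayley}.
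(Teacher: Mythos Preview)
Your approach is correct and coincides with the paper's in substance: both composites are reduced to the same explicit Kasparov module $[(V,\ol{(U-V)X^n}_{J_X},\Ci_V(U))]$, with the right-hand side obtained via the interior tensor product with the Morita bimodule exactly as you describe. The one organizational difference is where the base-point homotopy is performed. You replace the base point $W_2\hat{Z}W_2^*$ from Lemma~\ref{lem:DK-morita} by $W_2(V\oplus Z\oplus V\oplus Z)W_2^*$ \emph{at the $DK$ level} and then apply $\Chris^{J_X}$; this relies on homotopy invariance of $\Chris$ in the second variable (the base point), which the proof of Lemma~\ref{lem:Roe_to_KK} does not treat explicitly---only paths $t\mapsto V_t$ with $W$ fixed are considered there. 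The paper instead applies $\Chris^{J_X}$ to the literal output of $\zeta_X$ (with base point $\hat{Z}$), obtaining a Kasparov module on the four-block module over $(X\oplus\hat\H)^{\oplus 2n}_{J_X}$, and then writes down the explicit path of OSUs
\[
\begin{pmatrix}\sin(t)V&0&\cos(t)1_n&0\\0&Z^{\oplus n}&0&0\\\cos(t)1_n&0&\sin(t)V&0\\0&0&0&Z^{\oplus n}\end{pmatrix}
\]
interpolating between $\hat Z$ and $V\oplus Z\oplus V\oplus Z$, so that the homotopy (of modules, Clifford actions and Cayley operators simultaneously) is carried out at the Kasparov-module level. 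That route is slightly more self-contained, since it does not invoke the unproved base-point invariance of $\Chris$; your route is cleaner, but to make it airtight you should remark that the argument of Lemma~\ref{lem:Roe_to_KK} adapts verbatim to continuous paths $t\mapsto W_t$ of base points.
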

\begin{proof}
Let $[U]-[V]\in DK(\End_A^0(X))$, so that $U,\,V\in M_n(\End_A^0(X)^\sim \hox \Cl_{1,1})$ 
with $U-V\in M_n(\End^0_A(X)\hox \Cl_{1,1})$.
Because we deal with matrices over $\End_A^0(X)^\sim \hox \Cl_{1,1}$, the Morita isomorphism 
of Lemma \ref{lem:DK-morita} gives that
\begin{align*}
\Chris^{J_X}\circ\zeta_X([U]-[V])
&=\Chris^{J_X}([W_{2n} (\tilde{U}\oplus Z^{\oplus n} \oplus  V\oplus Z^{\oplus n}) W^*_{2n}] 
- [W_{2n}\hat{Z}^{\oplus n}W_{2n}^*]) \\
&\hspace{-0.5cm}=\left[\left(\hat{Z}^{\oplus n}, \, \ol{\begin{pmatrix} \tilde{U}&0&-1&0\\ 0 & 0 & 0 & 0\\-1&0 & V&0\\ 0 & 0 & 0 & 0\end{pmatrix}
 \big(X \oplus\hat{\H}\big)^{\oplus 2n}_{{J_X}}}, \, \Ci_{\hat{Z}^{\oplus n}}(\tilde{U}\oplus Z^{\oplus n}\oplus V\oplus Z^{\oplus n} )\right)\right],
\end{align*}
where we have removed $W_{2n}$ using unitary invariance of $KK$-classes.
Similarly, we have that 
\begin{align*}
\zeta_{X}^{KK} \circ \Chris^{\End^0_A(X)} ([U]-[V])
&= \zeta_{X}^{KK} ([(V, \, \overline{(U-V)\End^0_A(X)^n}_{\End^0_A(X)}, \, \Ci_V(U))])\\
&=\big[(V, \, \overline{(U-V)X_{J_X}^n}, \, \Ci_V(U))].
\end{align*}
As in the proof of Lemma \ref{lem:Roe_to_KK}, 
the continuous homotopy from $U$ to $\tilde{U}$
gives a homotopy of Kasparov modules.

To complete the proof, we homotopy the Kasparov module
representing $\Chris^{J_X}\circ\zeta_X([U]-[V])$. First observe that
$$
  \begin{pmatrix} \sin(t)V & 0 & \cos(t)1_{n} & 0 \\ 0 & Z^{\oplus n} & 0 & 0 \\ \cos(t)1_{n}  & 0 & \sin(t)V & 0 \\ 0 & 0 & 0 & Z^{\oplus n} \end{pmatrix}
$$
is a homotopy of OSUs. This yields a homotopy of $C^*$-modules
$$
\ol{\begin{pmatrix} \tilde{U}-\sin(t)V&0&-\cos(t)1_n &0\\0&0&0&0\\-\cos(t)1_n &0 & V-\sin(t)V&0\\0&0&0&0\end{pmatrix}
\big( X \oplus\hat{\H}\big)^{\oplus 2n}_{{J_X}}}
$$
to $\ol{(\tilde{U}-V)X^n_{{J_X}}}\oplus 0$. Simultaneously, we obtain  
a homotopy of operators from 
$\Ci_{\hat{Z}^{\oplus}}(\tilde{U}\oplus Z^{\oplus n}\oplus V\oplus Z^{\oplus n})$ 
to $\Ci_{V}(\tilde{U})\oplus 0$ compatible with the obvious path
of domains and the (constant) left action of $\Cl_{1,0}$. Thus
\begin{align*}
\Chris^{{J_X}} \circ\zeta_X([U]-[V])&=[(V, \, \ol{(\tilde{U}-V)X_{J_X}^n}, \, \Ci_V(\tilde{U}))] \\
 &=  \zeta^{KK}_X\circ \Chris^{\End^0_A(X)}([U]-[V]).\qedhere
\end{align*}
\end{proof}

We now consider the map $KKR(\Cl_{1,0},A) \to DK(A)$.

\begin{lemma} \label{lem:KK_to_Roe}
Let $(e, X_A, T)$ be an unbounded Real Kasparov module 
with  $e^2 = 1_X$ and 
$e$ anti-commuting with $T$. 
Let 
${J_X}=\ol{{\rm span}(X|X)_A}$, 
$\zeta_X: DK(\End_A(X),\End_A(X)/\End^0_{A}(X)) \xrightarrow{\simeq}  DK({J_X})$ the 
isomorphism of Corollary \ref{cor:rel-morita} and 
$\iota:{J_X}  \hookrightarrow A $ the inclusion.
Then 
the Cayley transform defines a homomorphism 
$\Adam: KKR(\Cl_{1,0},A) \to DK(A, \rs_A )$,
$$
 KKR(\Cl_{1,0}, A) \ni [(e,X_A, T)] \xmapsto{\Adam} 
   \iota_{\ast}  \circ\zeta_X   \big( [\calC_e(T)] - [e]  \big)  \in DK(A, \rs_A).
$$
\end{lemma}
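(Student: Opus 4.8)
The plan is to verify three things in turn: that the stated formula returns an element of $DK(A,\rs_A)$ at all, that this element is independent of which representative of the $KKR(\Cl_{1,0},A)$-class is used, and that $\Adam$ respects direct sums. The last point is routine from the compatibility of $\Cd_e$, $\zeta_X$ and $\iota_\ast$ with direct sums, so the substance is in the first two.

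That the formula is meaningful follows quickly from the earlier results. By Remark~\ref{rk:KK(Cl_1,A)_normalisation} and~\cite{BJ} represent the class by a normalised unbounded Kasparov module $(e,X_A,T)$ with $e^2=1_X$, $X_A$ balanced graded with OSU $e$, and $eT=-Te$. Since $e$ is the image of the generator of $\Cl_{1,0}$ and $e^2=1_X$, the left action is unital, so $(1+T^2)^{-1}\in\End_A^0(X)$; Lemma~\ref{lem-Cd1} then shows $\Cd_e(T)$ is an odd self-adjoint unitary with $\Cd_e(T)-e\in\End_A^0(X)$, so $[\Cd_e(T)]-[e]$ is a genuine class in $DK(\End_A(X),\End_A(X)/\End_A^0(X))$. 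Applying the Morita isomorphism $\zeta_X$ of Corollary~\ref{cor:rel-morita} and then $\iota_\ast$ lands it in $DK(A)$, and because $e$ and $T$ are fixed by the relevant real structures while $\Cd_e$ involves no complex scalars, $\Cd_e(T)$ is again fixed, so the image lies in $DK(A,\rs_A)$.

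For independence of the representative, the key observation is that $\Cd_e(T)$ depends only on the bounded transform $F_T=T(1+T^2)^{-1/2}$: using $T(1+T^2)^{-1}=F_T(1-F_T^2)^{1/2}$ and $(1+T^2)^{-1}=1-F_T^2$ one finds
$$
\Cd_e(T)=e+2(T-e)(1+T^2)^{-1}=e(2F_T^2-1)+2F_T(1-F_T^2)^{1/2}.
$$
One checks directly that the right-hand side is an odd self-adjoint unitary differing from $e$ by $\End_A^0(X)$ whenever $1-F_T^2\in\End_A^0(X)$, even without $1-F_T^2$ having dense range, so this formula also defines $\Adam$ on normalised \emph{bounded} Kasparov modules. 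From this, an operator homotopy $(e,X_A,T_t)$ gives a norm-continuous path $t\mapsto\Cd_e(T_t)$ of OSUs in $\End_A(X)$, each at compact distance from $e$, hence a homotopy in $DK(\End_A(X),\End_A(X)/\End_A^0(X))$, so $\iota_\ast\zeta_X$ agrees at the endpoints; a unitary equivalence is absorbed by the naturality of $\zeta_X$ under $\mathrm{Ad}$ of the implementing unitary (as in the proof of Lemma~\ref{lem:DK-morita}); and a degenerate module has $F^2=1_X$, whence the displayed formula gives $\Cd_e(T)=e$ and the contribution is $0$, so that in particular adding a degenerate module to $(e,X_A,T)$ leaves $\Adam$ unchanged. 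Since $KKR(\Cl_{1,0},A)$ is generated by normalised Kasparov modules modulo exactly these three moves, $\Adam$ descends to $KKR(\Cl_{1,0},A)$ and is the asserted homomorphism; the real-structure variant is identical.

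I expect the main obstacle to be the bookkeeping forced by non-fullness: for a general $X_A$ the transform $\Cd_e(T)$ and its perturbations only live in $\End_A^0(X)$, which is merely Morita equivalent to the ideal $J_X\subseteq A$ rather than to $A$ itself, so every step above — operator homotopy, unitary equivalence, and the reduction to the normalised picture of Remark~\ref{rk:KK(Cl_1,A)_normalisation} — must be checked to be natural both for the inclusion $\iota\colon J_X\hookrightarrow A$ and for the Morita isomorphism $\zeta_X$. These are precisely the compatibilities isolated afterwards in Lemmas~\ref{lem:Cay-iota} and~\ref{lem:Cay-zeta}, and once they are in hand the argument above goes through cleanly.
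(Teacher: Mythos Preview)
Your argument is correct and follows the same overall strategy as the paper: show that $[\Cd_e(T)]-[e]$ is a well-defined element of the relative group $DK(\End_A(X),\End_A(X)/\End_A^0(X))$, then check invariance under unitary equivalence, operator homotopy, and addition of degenerate modules, and finally observe additivity under direct sums.

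The one genuine difference is your explicit formula $\Cd_e(T)=e(2F_T^2-1)+2F_T(1-F_T^2)^{1/2}$, which makes the dependence on the bounded transform $F_T$ alone manifest. The paper arrives at the equivalent factorisation in Equation~\eqref{eq:opr_htpy_OSU_htpy} but does not exploit it fully: for operator homotopies it first invokes~\cite{DM} to lift a bounded homotopy $F_t$ back to an unbounded family $T_t$ before computing, and for degenerate modules it runs a separate path $V(\lambda)=e(T+\lambda e)(T-\lambda e)^{-1}$ using invertibility of $T$. Your formulation makes both steps immediate---continuity in $F_t$ is visible, and $F^2=1$ gives $\Cd_e=e$ directly---so you avoid the appeal to~\cite{DM} entirely. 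Your closing paragraph slightly overstates the role of Lemmas~\ref{lem:Cay-iota} and~\ref{lem:Cay-zeta}: those compatibilities are needed for Theorem~\ref{thm:KK_to_DK_iso} (showing $\Chris$ and $\Adam$ are mutual inverses), not for the well-definedness of $\Adam$ established here, where the module $X$ is fixed under operator homotopy and the direct-sum behaviour of $\zeta_X$ already handles the degenerate summands.
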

\begin{proof}
Lemma \ref{lem-Cd1} tells us that 
$\Cd_e(T)=e(T+e)(T-e)^{-1}$ is odd, self-adjoint, unitary,  
$\Cd_e(T)-e \in \End^0_A(X)$ and $\Cd_e(T)^\rs = \Cd_e(T)^\rs$.  
Hence we obtain a class 
$[\calC_e(T)] - [e] \in DK( \End_A(X), \, \End_A(X)/ \End_A^0(X))$.
Thus $\iota_{\ast}\circ \zeta_X  \big( [\Cd_e(T)] - [e]\big)$ 
is a well-defined element in $DK(A,\rs_A)$ and we just need 
to check that the map respects the relevant equivalence relations. 
We use the equivalence relation on $KKR$ generated by 
unitary equivalence, addition of degenerate
Kasparov modules and operator homotopy~\cite[Section 17]{Blackadar}.

Any (bounded) 
degenerate Kasparov module $(e,X_A,F)$ 
has $F$ invertible and anticommuting with $e$.
So suppose that the operator $T$ of our  unbounded Kasparov module
$
\left(e, \,X_A, \, T\right)
$
is invertible, self-adjoint and graded commutes with the $\Cl_{1,0}$-action. The
phase of $T$ then defines a degenerate bounded 
Kasparov module, whose class in $KKR$
is zero. Consider  the homotopy
$V(\lambda) = e(T + e\lambda)(T-e\lambda)^{-1}$
for $\lambda\in[0,1]$. Using the normality of $T$, we compute that
for $\lambda,\,\varrho\in[0,1]$,
\begin{align*}
  e(T + \lambda e)(T-\lambda e)^{-1} -e (T + \varrho e)(T-\varrho e)^{-1} 
 &= 2e(\lambda - \varrho)T(T -\varrho e)^{-1}(T-\lambda e)^{-1}.
\end{align*}
Hence the map $\lambda \mapsto V(\lambda)$ is norm continuous as 
$T(T-\varrho e)^{-1}(T-\lambda e)^{-1}$ 
is uniformly bounded since $T$ is invertible. The path is also 
invariant under the real structure as $T^\rs =T$ and $e^\rs = e$. 
We obtain a homotopy of OSUs  such that
$$
  V(\lambda) - e \in \End_A^0(X), \qquad V(1) = e(T+e)(T-e)^{-1}\sim V(0) = e 
$$
and $[\Cd_e(T)] - [e] = 0$. 
Thus  degenerate Kasparov classes 
map to zero.

Given an operator homotopy $(e,X_A, F_t)$ 
of bounded Kasparov modules with
 $F_0=T(1+T^2)^{-1/2}$, we can define the class
$$
\left(\Cl_{1,0}, (X\ox C([0,1]))_{A\ox C([0,1])}, F_\bullet\right)
$$
as a bounded Kasparov module. As shown in \cite[Proposition 2.8, Theorem 2.9]{DM}, there is some 
self-adjoint regular $T_\bullet$ such that $F_{T_\bullet}$ is
 $F_\bullet$, and we can moreover 
take $T_0$ to be operator homotopic to $T$. Averaging allows us to ensure that $T_te+eT_t=0$ 
and $T_t^\rs = T_t$ for all $t\in[0,1]$.  Then we have a homotopy 
of unbounded operators $T_t$ such
that $T_t(1+T_t^2)^{-1/2}$ is operator norm continuous 
for all $t$. Then using $(T_t-e)^{-2}=(1+T^2)^{-1}$
we compute
\begin{align} \label{eq:opr_htpy_OSU_htpy}
  e(T_t+e)(T_t-e)^{-1}&=e(T_t+e)(1+T_t^2)^{-1/2}(1+T_t^2)^{1/2}(T_t-e)^{-1} \nonumber \\
  &=e(T_t+e)(1+T_t^2)^{-1/2}(1+T_t^2)^{1/2}(T_t-e)(T_t-e)^{-2} \nonumber \\
  &=e(T_t+e)(1+T_t^2)^{-1/2}(T_t-e)
(1+T_t^2)^{-1/2}
\end{align}
which is a product of norm continuous paths by assumption. 
So we obtain a homotopy of odd Real self-adjoint unitaries. 
Then the class
$[\Cd_{e}(T_t)] - [e]$ is 
constant in the relative group 
$DK(\End_A(X),\End_A(X)/\End_A^0(X))$ 
for all $t \in [0,1]$ and so $\Adam$ is constant under operator homotopies.

The invariance of the map under unitary equivalence is a simple check. 
Finally, because group addition is induced by the direct sum, 
it follows that  $\Adam$ is a homomorphism.
\end{proof}

We combine Lemmas \ref{lem:Roe_to_KK} and  \ref{lem:KK_to_Roe} to obtain our main result.

\begin{thm} 
\label{thm:KK_to_DK_iso}
The homomorphisms $\Adam:KKR(\Cl_{1,0},A) \to DK(A,\mathfrak r_A)$
and $\Chris: DK(A,\mathfrak r_A)\to KKR(\Cl_{1,0},A)$ 
are mutually inverse isomorphisms.
\end{thm}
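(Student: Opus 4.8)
The plan is to show that $\Adam$ and $\Chris$ are mutually inverse by composing them in both orders and reducing to the results already established: Proposition~\ref{prop:Cayley_inverses} at the level of operators, Lemmas~\ref{lem:Cay-iota} and~\ref{lem:Cay-zeta} for compatibility with the inclusion $J_X\hookrightarrow A$ and with Morita invariance, and Lemma~\ref{lem:graded-circle} (the graded analogue of Lemma~\ref{lem:circle}) for the interaction of the inclusion with $DK$. Since both maps are homomorphisms, it suffices to check the two compositions agree with the identity on generators, and we may use the normalisations of Remark~\ref{rk:KK(Cl_1,A)_normalisation} on the $KK$ side and the excision/Morita pictures of Section~\ref{subsec:vD-K} and Lemma~\ref{lem:DK-morita} on the $DK$ side. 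I would first dispose of the balanced unital case and then indicate how the weakly balanced and non-balanced cases follow by the same bookkeeping, exactly as in the proofs of Lemmas~\ref{lem:Roe_to_KK} and~\ref{lem:KK_to_Roe}.

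For $\Adam\circ\Chris=\mathrm{id}$, start with $[V]-[W]\in DK(A)$ with $V,W\in M_n(A)$ OSUs. By definition $\Chris([V]-[W])=[(W,\overline{(V-W)A^n_A},\Ci_W(V))]$, and applying $\Adam$ to this class means forming $\Cd_{W}(\Ci_W(V))$, which by Proposition~\ref{prop:Cayley_inverses} is precisely the restriction of $V$ to $\overline{(V-W)A^n_A}$, together with the base point $W|_{\overline{(V-W)A^n_A}}$. Writing $X=\overline{(V-W)A^n_A}$, we have $J_X=\overline{A^n(V-W)A^n}=:J$, and the class $[\Cd_W(\Ci_W(V))]-[W|_X]$ in $DK(\End_A(X),\End_A(X)/\End^0_A(X))$ is carried by $\zeta_X$ to the class $[V]^J-[W]^J\in DK(J)$ — this is the content of Lemma~\ref{lem:graded-circle}, noting that $\End^0_A(X)$ is Morita equivalent to $J$ and that the two OSUs $V|_X,W|_X$ differ by the compact $\Ci_W(V)$-built endomorphism. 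Finally $\iota_\ast([V]^J-[W]^J)=[V]-[W]\in DK(A)$ by Lemma~\ref{lem:graded-circle}, so the composition is the identity.

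For $\Chris\circ\Adam=\mathrm{id}$, start with a normalised unbounded Kasparov module $(e,X_A,T)$ with $e^2=1_X$, $e$ anticommuting with $T$, and $X_A$ countably generated and balanced graded. Then $\Adam[(e,X_A,T)]=\iota_\ast\circ\zeta_X([\Cd_e(T)]-[e])$, and the strategy is to track this through $\Chris$ using the two compatibility lemmas. Concretely, let $\Chris^{\End^0_A(X)}$, $\Chris^{J_X}$, $\Chris^A$ denote the Cayley maps into the respective $KK$-groups. Lemma~\ref{lem:Cay-iota} gives $\Chris^A\circ\iota_\ast=\iota_\ast\circ\Chris^{J_X}$ and Lemma~\ref{lem:Cay-zeta} gives $\Chris^{J_X}\circ\zeta_X=\zeta^{KK}_X\circ\Chris^{\End^0_A(X)}$, so
\[
\Chris\circ\Adam[(e,X_A,T)] = \iota_\ast\circ\zeta^{KK}_X\circ\Chris^{\End^0_A(X)}\big([\Cd_e(T)]-[e]\big).
\]
Now $\Chris^{\End^0_A(X)}([\Cd_e(T)]-[e])$ is, by definition, the class of $(e,\overline{(\Cd_e(T)-e)\End^0_A(X)},\Ci_e(\Cd_e(T)))$, and Proposition~\ref{prop:Cayley_inverses} gives $\Ci_e(\Cd_e(T))=T$ exactly (on $X_A$), while $\overline{(\Cd_e(T)-e)\End^0_A(X)}=\overline{(T-e)^{-1}\End^0_A(X)}$ has dense range in $\End^0_A(X)$ since $T-e$ is invertible; hence this is, up to the dense-range issue that produces no change in $KK$, the class $(e,\End^0_A(X)_{\End^0_A(X)},T)$. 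Pushing through $\zeta^{KK}_X=\cdot\hox_{\End^0_A(X)}[(\End^0_A(X),X_{J_X},0)]$ and then $\iota_\ast$ recovers $[(e,X_A,T)]\in KKR(\Cl_{1,0},A)$, exactly as in the computation at the end of the proof of Theorem~\ref{thm:complex_cayley}. This establishes $\Chris\circ\Adam=\mathrm{id}$ in the balanced unital case.

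The main obstacle is bookkeeping rather than a deep new idea: one must carefully match the stabilisation isomorphisms $W_2$ appearing in $\zeta_X$ (Lemma~\ref{lem:DK-morita} and Corollary~\ref{cor:rel-morita}) with the Morita bimodule $(\End^0_A(X),X_{J_X},0)$ defining $\zeta^{KK}_X$, and to handle the ``dense range but not all of $X$'' gap between $\Cd_e\circ\Ci_e(U)$ and $U$ — both already controlled by Lemma~\ref{lem:Cay-zeta} and the homotopies used there. The remaining cases (non-unital weakly balanced $A$, and not-even-weakly-balanced $A$ where one tensors with $\Cl_{1,1}$ and uses $\xi_{\Cl_{1,1}}$) follow by the same argument after replacing $A$ by the relevant unital/balanced algebra and using the relative pictures $DK(B,B/A)$ and $DK(A^\sim\hox\Cl_{1,1})$ together with excision (Proposition~\ref{prop:excision}); one checks that $\Chris$ and $\Adam$ intertwine these identifications, which is immediate from how they were defined in Lemmas~\ref{lem:Roe_to_KK} and~\ref{lem:KK_to_Roe}.
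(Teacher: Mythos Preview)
Your treatment of $\Chris\circ\Adam$ is correct and essentially identical to the paper's: you push $\Chris$ past $\iota_\ast$ and $\zeta_X$ via Lemmas~\ref{lem:Cay-iota} and~\ref{lem:Cay-zeta}, then invoke Proposition~\ref{prop:Cayley_inverses} and the dense range of $(T-e)^{-1}$ to recover $(e,X_A,T)$.

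The $\Adam\circ\Chris$ direction has a genuine gap. You assert that $\zeta_X\big([V|_X]-[W|_X]\big)=[V]^J-[W]^J$ in $DK(J)$, citing Lemma~\ref{lem:graded-circle}. But that lemma says only that the OSUs $V,W\in M_n(A)$ determine a class $[V]^J-[W]^J$ in $DK(J)$ whose push-forward under $\iota_\ast$ is $[V]-[W]$; it says nothing about the Morita map $\zeta_X$ out of $DK(\End^0_A(X))$. The identification you want is not formal: $\zeta_X$ is \emph{defined} via stabilisation and the explicit formula of Lemma~\ref{lem:DK-morita}/Corollary~\ref{cor:rel-morita}, and one must actually verify that conjugating $V|_X\oplus Z\oplus W|_X\oplus Z$ by $W_2$ and then applying $\iota_\ast$ lands on $[V\oplus W]-[\sigma_1]$. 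The paper does exactly this computation: it unwinds the formula for $\zeta_Y$, passes from $\tilde U|_Y$ back to $U|_Y$ via the homotopy built into Lemma~\ref{lem:DK-morita}, and then uses that the restricted operators $U|_Y,V|_Y$ extend to $U,V$ on the ambient module after $\iota_\ast$. None of Lemmas~\ref{lem:Cay-iota} or~\ref{lem:Cay-zeta} can substitute here, since they are compatibility statements for $\Chris$, not for $\Adam$ or for $\zeta_X$ alone.

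The remark that ``the main obstacle is bookkeeping'' is accurate, but in this direction you have not done the bookkeeping; you have asserted the conclusion. To repair the argument, either carry out the explicit $\zeta_X$ computation as the paper does, or prove a dual of Lemma~\ref{lem:Cay-zeta} (something like $\Adam^{J_X}\circ\zeta_X^{KK}=\zeta_X\circ\Adam^{\End^0_A(X)}$) and use it together with the known injectivity of $\Chris$ on the other side.
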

\begin{proof}
We do not assume that the unitisation $A^\sim$ is balanced graded,
so the homomorphism  $\Chris: DK(A,\mathfrak r_A)\to KKR(\Cl_{1,0},A)$
is defined as in \eqref{eq:needed-label}. 

We have already shown that $\Adam:KKR(\Cl_{1,0},A) \to DK(A)$ and 
$\Chris^{A} : DK(A) \to KKR(\Cl_{1,0}, A )$ are well-defined. 
We just 
need to show they are mutual inverses.

We first consider $\Chris \circ \Adam$. Take an element $[(e, X_A, T)] \in KKR(\Cl_{1,0}, A)$ with $e^2 = 1_X$ 
and $e$ anti-commuting with $T$.  
We set ${J_X}=\ol{{\rm span}(X|X)_A}$ and compute using Lemmas \ref{lem:Cay-iota} and \ref{lem:Cay-zeta},
\begin{align*}
 [(e, X_A, T)] &\xmapsto{\Adam}  \iota_*\circ\zeta_X    \big([\Cd_{e}(T)] - [e] \big) \\
  &\xmapsto{{\Chris}}\Chris^{A}  \circ\iota_*\circ\zeta_X    \big([\Cd_{e}(T)] - [e] \big)\\
  &= \iota_*\circ\Chris^{{J_X}} \circ\zeta_X  \big([\Cd_{e}(T)] - [e] \big)\\
  &= \iota_*\circ\zeta_{X}^{KK} \circ\Chris^{\End^0_A(X)}   \big([\Cd_{e}(T)] - [e] \big)\\
  & =\iota_*\circ\zeta_{X}^{KK}  \big[ (e, \, \ol{(T-e)^{-1} \End_{A}^0(X)}_{\End_{A}^0(X)}, \, \Ci_{e}\circ\Cd_{e}(T) ) \big] \\
  &= \iota_* \circ \zeta_X^{KK} \big[( e, \, \End_A^0(X)_{\End_{A}^0(X)}, \, T) \big]  \\
  &= \iota_* \big[ ( e , \, X_{{J_X}}, \, T) \big]  \\
  &= \big[ (e, \, X_{A}, \, T ) \big] ,
  \end{align*}
where we have used 
that 
$\Cd_{e}(T) - e = 2(T - e)^{-1}$, 
$(T - e)^{-1}$ has dense range, and
$\Ci_{e}\circ\Cd_{e}(T) = T$ 
by Proposition \ref{prop:Cayley_inverses}.

We now consider $\Adam \circ \Chris$. We do not assume $A^\sim$ is balanced graded and so 
consider OSUs
$U,\, V \in M_n(A^\sim\hox\Cl_{1,1})$ with 
$U-V\in M_n(A\hox\Cl_{1,1})$. Our Cayley map then gives
$$
   \Chris\big([U]-[V] \big) = \big[ ( V, \, \ol{(U-V)(A\hox\C^2)^n_A}, \, \calC_V^{-1}(U) ) \big].
$$
We let $Y_A = \ol{(U-V)(A\hox\C^2)^n_A}$ and recall 
from Proposition \ref{prop:Cayley_inverses} that $\Ci_V\circ \calC_V(U) = U |_Y$.
We let 
$J_Y= \ol{\mathrm{span}(Y|Y)_A}$
and use 
Equation \eqref{eq:another-fucking-iso} and Lemma  \ref{lem:Cay-iota}  to compute 
\begin{align*}
\Adam\circ \Chris\big([U]-[V] \big)
&=  \iota_*\circ\zeta_{Y}  \big([U|_{Y_A}]^{\End^0_{A}(Y)}-[V|_Y]^{\End^0_{A}(Y)}\big)\\
&=  \iota_*\big([W_2 (\widetilde{U}|_{Y_A} \oplus Z \oplus V|_{Y_A}\oplus Z) W^*_2]^{J_Y}-[W_2\hat{Z}W_2^*]^{J_Y}\big)\\
&=  \iota_*\big([W_2(U|_{Y_A}\oplus Z\oplus V|_{Y_A}\oplus Z)W^*_2]^{J_Y}-[W_2\hat{Z}W_2^*]^{J_Y}\big)\\
&=  [W_2(U\oplus Z\oplus V\oplus Z)W^*_2]-[W_2\hat{Z}W_2^*]  \\
&= \left[\begin{pmatrix} U & 0 \\ 0 & V \end{pmatrix} \right] - \left[\begin{pmatrix} 0 & 1_n \\ 1_n & 0 \end{pmatrix} \right]
    \in DK(A\hox \Cl_{1,1}).
\end{align*}
This completes the proof. If $A$ is balanced graded, then we can apply 
(the inverse of) the isomorphism \eqref{eq:ubiquitous-isomorphism} 
to recover $[U]-[V] \in DK(A)$ explicitly.
\end{proof}

For completeness, let us list a few immediate corollaries of our result.

\begin{cor}
\label{cor:general}
\begin{enumerate}
  \item Let $A$ be a graded $C^*$-algebra. Then 
  $KK(\Cl_1, A) \cong DK(A)$. 
  \item Let $B$ be a real $C^*$-algebra, $B = A^{\mathfrak{r}_A}$ for some Real $C^*$-algebra $A$. 
  Then $KKO(Cl_{1,0}, B) \cong DK(B)$. 
  \item Recall the complex graded $K$-theory groups $K_j^{\mathrm{gr}}(A) := KK(\C, A\hat\otimes \Cl_j)$ 
  from~\cite{KPS}. Then $DK(A) \cong K_1^{\mathrm{gr}}(A)$.
  \item  Let $KR'(A) = [C_0(\R), A \hat\otimes \calK]$ denote the group of $\Z_2$-graded and 
  Real asymptotic morphisms from~\cite{TroutGraded}, where $C_0(\R)$ has grading $\alpha$ such that 
  $\alpha(f)(x) = f(-x)$ and real structure by complex conjugation. Then 
  $KR'(A\hox \Cl_{0,1}) \cong DK(A)$.
\end{enumerate}
\end{cor}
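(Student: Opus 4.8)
The plan is to deduce all four items from Theorem~\ref{thm:KK_to_DK_iso}, which already supplies the isomorphism $DK(A,\mathfrak r_A)\cong KKR(\Cl_{1,0},A)$, by combining it with the elementary identifications recorded in Section~\ref{subsec:KK} and with formal Clifford periodicity for $KK$ and $KKR$. None of the four statements should require new analysis; the work is entirely in bookkeeping and in matching conventions.

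\emph{Items (1) and (2).} For (1) I would observe that the whole construction of Section~\ref{subsec:Cayley_DK_iso} --- the maps $\Chris$ and $\Adam$, the auxiliary Lemmas~\ref{lem:Roe_to_KK}--\ref{lem:KK_to_Roe}, and the Morita statement of Corollary~\ref{cor:rel-morita} --- makes sense verbatim in the category of complex $\Z_2$-graded $C^*$-algebras and Hilbert modules once every mention of a real structure is deleted; Section~\ref{sec:complex_K_to_KK} already carried this out in the ungraded case, and the graded Cayley transforms of Section~\ref{subsec:Cayley_DK_iso} were noted there to involve no complex structure. Hence the same Cayley maps furnish mutually inverse isomorphisms $DK(A)\cong KK(\Cl_{1,0},A)=KK(\Cl_1,A)$, where $DK(A)$ is the complex van Daele group of Definition~\ref{defn:vD-K}. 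For (2), with $B=A^{\mathfrak r_A}$, I would simply compose the three isomorphisms $DK(B)=DK(A^{\mathfrak r_A})\cong DK(A,\mathfrak r_A)$ (the definition of $DK$ with a real structure), $DK(A,\mathfrak r_A)\cong KKR(\Cl_{1,0},A)$ (Theorem~\ref{thm:KK_to_DK_iso}), and $KKR(\Cl_{1,0},A)\cong KKO(\Cl_{1,0}^{\mathfrak r},A^{\mathfrak r_A})=KKO(Cl_{1,0},B)$, the last using the identification $KKR(C,D)\cong KKO(C^{\mathfrak r},D^{\mathfrak r})$ recalled in Section~\ref{subsec:KK} together with $\Cl_{1,0}^{\mathfrak r}=Cl_{1,0}$.

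\emph{Items (3) and (4).} These bring in formal Clifford periodicity. For (3) I would use the standard complex isomorphism $KK(D\hat\otimes\Cl_1,E)\cong KK(D,E\hat\otimes\Cl_1)$, a consequence of $\Cl_1\hat\otimes\Cl_1\cong\Cl_2\cong M_2(\C)$ being graded-Morita trivial; taking $D=\C$ and $E=A$ gives $KK(\Cl_1,A)\cong KK(\C,A\hat\otimes\Cl_1)=K_1^{\mathrm{gr}}(A)$, and item (1) then closes the chain $DK(A)\cong K_1^{\mathrm{gr}}(A)$. For (4) I would first invoke the main theorem of~\cite{TroutGraded}, in its $\Z_2$-graded Real form, which identifies $KR'(E)=[C_0(\R),E\hat\otimes\calK]$ with $KKR(\C,E)$ for the grading $\alpha(f)(x)=f(-x)$ and the complex-conjugation real structure on $C_0(\R)$; applied to $E=A\hat\otimes\Cl_{0,1}$ this gives $KR'(A\hat\otimes\Cl_{0,1})\cong KKR(\C,A\hat\otimes\Cl_{0,1})$. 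Then I would apply the $KKR$ analogue $KKR(D\hat\otimes\Cl_{p,q},E)\cong KKR(D,E\hat\otimes\Cl_{q,p})$, which follows from $\Cl_{p,q}\hat\otimes\Cl_{q,p}\cong\Cl_{p+q,p+q}$ together with $\Cl_{1,1}\cong Cl_{1,1}\otimes_{\R}\C$ and $Cl_{1,1}\cong M_2(\R)$ being graded-Morita trivial, with $D=\C$, $(p,q)=(1,0)$ and $E=A$, to obtain $KKR(\C,A\hat\otimes\Cl_{0,1})\cong KKR(\Cl_{1,0},A)$; Theorem~\ref{thm:KK_to_DK_iso} then yields $DK(A)$.

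The Clifford bookkeeping above is routine; the one point I expect to require genuine care --- and hence the main obstacle --- is matching conventions in item (4): one must verify that Trout's graded Real asymptotic-morphism functor, built from $C_0(\R)$ with the stated grading and real structure, really computes $KKR(\C,-)$ for the $\Z_2$-grading and real structure used in this paper (rather than a degree-shifted or $\Cl_{0,1}$-twisted variant), and likewise that the normalisation of $K_j^{\mathrm{gr}}$ in~\cite{KPS} is $KK(\C,-\hat\otimes\Cl_j)$ on the nose. Once those conventions are pinned down, each of (1)--(4) is a short composition of isomorphisms.
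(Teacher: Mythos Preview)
Your proposal is correct and follows essentially the same route as the paper: items (1) and (2) are handled by forgetting or restricting the real structure, while (3) and (4) are obtained from Theorem~\ref{thm:KK_to_DK_iso} via Clifford periodicity (external product with $1_{KK(\Cl_1,\Cl_1)}$ and the Morita equivalence $\Cl_2\sim\C$, respectively $\Cl_{1,1}\sim\C$) together with Trout's identification $KR'(-)\cong KKR(\C,-)$ from \cite[Theorem~4.7]{TroutGraded}. The paper's proof is terser but otherwise identical; your caveat about matching Trout's conventions is exactly the right thing to flag, and the paper resolves it simply by citing the specific theorem number.
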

\begin{proof}
The first two results come from either ignoring the real structure or passing to a real 
subalgebra. For the third statement,  we use that 
$$
  DK(A) \cong KK(\Cl_1, A) \cong KK(\Cl_2, A\hat\otimes \Cl_1) \cong KK(\C, A\hat\otimes \Cl_1) =  K_1^{\text{gr}}(A),
$$
where we take the external product by $1_{KK(\Cl_1,\Cl_1)}$ and use the 
Morita equivalence between $\Cl_2$ and $\C$. 
Similarly, it is shown in~\cite[Theorem 4.7]{TroutGraded} that 
$KR'(A) \cong KKR(\C, A)$. Hence 
\[
  KR'(A \hox \Cl_{0,1}) \cong KKR(\C, A\hox \Cl_{0,1}) \cong KKR(\Cl_{1,0}, A) \cong DK(A) .  \qedhere
\]
\end{proof}

\begin{rmk}
\label{rmk:graphproj-vs-general}
Let us briefly consider the map $\Adam$ applied to a 
complex Kasparov module $(\C,X_A,T)$ with $A$ trivially graded. 
We first inflate this Kasparov module to a class in $KK(\Cl_{1}, A\otimes \Cl_{1})$ by 
taking the external product with the ring identity of $KK(\Cl_{1}, \Cl_{1})$. 
Given the Kasparov module 
$(e,(X\hox\Cl_1)_{A\hox\Cl_1},T\hox1)$, we choose the `ordered basis' 
$$
(X\hox\Cl_1)_{A\hox \Cl_1}=
\big[ (X_+\ox \Cl_{1,+})\oplus (X_-\ox \Cl_{1,-}) \oplus  (X_+\ox \Cl_{1,-}) \oplus (X_-\ox \Cl_{1,+}) \big]_{A \hox \Cl_1}
$$
and then compute 
\begin{align*}
  \Cd_e(T) - e = 2(T-e)^{-1}  = \begin{pmatrix} 0_2 & (\tilde{T} - \sigma_3)^{-1} \\ (\tilde{T}-\sigma_3)^{-1} & 0_2 \end{pmatrix}, 
  \qquad \tilde{T} = \begin{pmatrix} 0 & T_- \otimes 1 \\ T_+ \otimes 1 & 0 \end{pmatrix} 
\end{align*}
with $T_\pm : X_\pm \to X_\mp$. Further expanding and supressing the tensor product notation
$$
  (\tilde{T} - \sigma_3)^{-1} = 
  \begin{pmatrix} - (1+ T_{-}T_+)^{-1} & T_{-}( 1+ T_{+}T_{-})^{-1} \\ T_{+}(1+T_{-}T_{+})^{-1} & (1+T_{+}T_{-})^{-1} \end{pmatrix} 
  = P_{T_-} - P_{X_+}.
$$
Hence, as an operator on 
$(X \hat\otimes \Cl_{1})_{A\hox \Cl_1} \cong \binom{X_+}{X_-}^{\oplus 2}_{A\hox \Cl_1}$, 
$\Cd_e(T) - e$ acts as $(P_{T_-} - P_{X_+}) \otimes \sigma_1$.
Therefore our general Cayley map $\Adam$ is precisely the negative of the graph projection 
map we employed in Section \ref{subsec:graph_proj}.
\hfill $\diamond$
\end{rmk}

\section{Applications to real and complex $K$-theory}
\label{sec:app-real}
In this section
we consider some special cases of Theorem \ref{thm:KK_to_DK_iso} 
to study examples and problems coming from real and complex $K$-theory.

\subsection{Unitary descriptions of $K$-theory}
\label{subsec:unitary_KO}

Given a complex and ungraded $C^*$-algebra $A$ with real structure $\rs_A$, we 
know from~\cite[{\S}5]{Kasp} that there are isomorphisms 
$KKR(\Cl_{r,s}, A) \cong KO_{r-s}(A^{\rs_A})$, where this identification is 
shown via a (generalised) Clifford-module index, see~\cite[Section 2.2]{SchroderKTheory}.

Alternatively, descriptions of $KO$-theory using Real $C^*$-algebras and unitaries have 
appeared in~\cite{BL15} and~\cite[Section 5.6]{Kellendonk15}. In this section, we show 
in a few cases how these unitary descriptions of $KO$-theory are compatible with our 
Cayley isomorphism.
We note that many of these descriptions will be of use to us for studying the 
bulk invariants of topological insulators in Section \ref{sec:TI_application}.

\vspace{0.1cm}

\begin{example}[Trivially graded algebras and $KO_1$]  \label{ex:triv_graded_Cayley}
Let $A$ be trivially graded and 
 $(e,X_A,T)$ an unbounded Real Kasparov module representing an element  in $KKR(\Cl_{1,0},A)$.
As $A$ is trivially graded, without loss of generality 
we can write $X_A \cong Y_A \oplus Y_A$ with $Y_A$ an ungraded $C^*$-module. 
Because $T$ anti-commutes with the generator 
$\Cl_{1,0}$ generator, 
our Kasparov module reduces to the form
$$
  \big( e, \, Y_A \otimes  \C^2, \,T= T_+  \otimes f \big), \qquad e = \begin{pmatrix} 0 & 1 \\ 1 & 0 \end{pmatrix}, \;\; 
  f = \begin{pmatrix} 0 & -1 \\ 1 & 0 \end{pmatrix},
$$
with $T_+^* = -T_+$.  The real structure on $\C^2$ is pointwise complex conjugation, 
which ensures that 
$e^\mathfrak{r} = e$, $f^\mathfrak{r} = f$ and $T_+^\rs = T_+$.

Because $T_+$ is skew-adjoint $(T_+ \pm 1)$ is invertible and we compute 
\begin{align*}
  \calC_e(T) = e(T+e)(T-e)^{-1} &= 
   \begin{pmatrix} 0 & 1 \\ 1 & 0 \end{pmatrix}  \begin{pmatrix} 0 & -T_+ +1 \\ T_+ +1 & 0 \end{pmatrix} 
    \begin{pmatrix} 0 & -T_+ -1 \\ T_+ - 1 & 0 \end{pmatrix}^{-1} \\
     &= \begin{pmatrix} 0 & (T_+ + 1)(T_+ - 1)^{-1} \\ (T_+ - 1)(T_+ + 1)^{-1} & 0 \end{pmatrix}.
\end{align*}
One finds that $U_{T_+} =(T_+ + 1)(T_+ - 1)^{-1}$ is unitary, $(U_{T_+})^\mathfrak{r} = U_{T_+}$ 
and 
$$
  1- (T_+ + 1)(T_+ - 1)^{-1}  = -2(T_+ -1)^{-1} \in \End_A^0(Y)
$$
as we have an unbounded Kasparov module. Thus for ${J_Y} = \overline{\mathrm{span}(Y | Y)_A}$ we have a class 
$[U_{T_+}] \in K_1(\End_A^0(Y)) \cong K_1({J_Y})$ where we use an ungraded version 
of the isomorphism of Equation \eqref{eq:another-fucking-iso} from Section \ref{subsec:pain}. 
If we ignore real structures, then denoting $\iota: {J_Y} \hookrightarrow A$ and 
$\zeta_Y: K_1( \End_A^0(Y)) \xrightarrow{\simeq} K_1({J_Y})$, we obtain a map
$$
 KK(\Cl_1,A) \ni  \big[ ( \Cl_{1}, \, Y_A \otimes \C^2 , \, T_+ \otimes f ) \big] 
  \mapsto \iota_\ast \circ \zeta_Y \big[(T_+ + 1)(T_+ - 1)^{-1} \big] \in  K_1(A)
$$
which is a skew-adjoint analogue of (the inverse of) the isomorphism in Theorem \ref{thm:complex_cayley}.
Similarly, passing to real subalgebras 
$ \iota_\ast \circ \zeta_{Y^{\rs_Y}} \big[(T_+ + 1)(T_+ - 1)^{-1} \big] \in  KO_1(A^{\rs_A})$.

Let us also consider the inverse map. If $U \in A^\sim$ is 
a unitary and $U^{\mathfrak{r}_A} = U$, then $(U+1)(U-1)^{-1}$ is an unbounded skew-adjoint 
operator and 
\begin{equation} \label{eq:ungraded_skew_cayley_inverse}
  \Big(  \sigma_1, \, \ol{(U-1) A_A} \otimes \C^2, \, (U+1)(U-1)^{-1} \otimes f \Big)
\end{equation}
is an unbounded Kasparov module, where the real structure on $\ol{(U-1) A_A}$ comes 
from $\mathfrak{r}_A$ and the real structure on $\C^2$ is pointwise complex conjugation. 
A direct check or Theorem \ref{thm:KK_to_DK_iso} (combined with the equivalence between van Daele and 
operator $K$-theory for ungraded algebras) gives that the map 
$KKO(Cl_{1,0},A^{\mathfrak{r}_A}) \to KO_1(A^{\mathfrak{r}_A})$ or $KK(\Cl_1, A) \to K_1(A)$ 
is an isomorphism with the inverse given by the unbounded 
Kasparov module in Equation \eqref{eq:ungraded_skew_cayley_inverse}.
\hfill $\diamond$
\end{example}

\vspace{0.1cm}

\begin{example}[An isomorphism $KKR(\Cl_{0,1},A) \to KO_{-1}(A^{\mathfrak{r}_A})$] \label{ex:imag_cayley}
Here we consider the Cayley map for elements in $KKR(\Cl_{0,1},A)$ that reduces to 
our original ungraded complex Cayley isomorphism from Theorem \ref{thm:complex_cayley} if we ignore 
the real structure.

Let $(\Cl_{0,1}, X_A, T)$ be a Real 
Kasparov module with $A$ trivially graded and  $f\in\Cl_{0,1}$ the generator.  Making 
analogous simplifications as Example \ref{ex:triv_graded_Cayley}, 
we write the Kasparov module as
$$
  \Big( \Cl_{0,1}, \, (Y \oplus Y)_A, \, T=\begin{pmatrix} 0 & S \\ S & 0 \end{pmatrix} \Big), \quad 
  f\mapsto \begin{pmatrix} 0 & -1 \\ 1 & 0 \end{pmatrix}, \,\, 
  (y_1,y_2)^{\mathfrak{r}_Y} = (y_1^{\mathfrak{r}_Y}, y_2^{\mathfrak{r}_Y}),
$$
which also implies that $S=S^*$ and $S^\mathfrak{r} = S$ on the ungraded $C^*$-module $Y_A$. 
As $S$ is self-adjoint, unbounded and has 
compact resolvent, we can apply the ungraded Cayley transform
$$
   U_S = \Cd(S) = (S+i)(S-i)^{-1},
$$
which by Proposition \ref{prop:Cayley_properties} 
is unitary and $U_S - 1 \in \End_A^0(Y)$. Applying the real structure on $\End_A(Y)$,
$U_S^\mathfrak{r} = U_S^*$. Hence, we obtain a map from cycles in $KKR(\Cl_{0,1},A)$ to 
unitaries  $u \in \End_A^0(Y)^\sim$ such that $u^\mathfrak{r} = u^*$. 

The group $KO_{-1}(A^{\mathfrak{r}_A})$ can be characterised by  equivalence 
classes of complex unitaries in $M_n(A^\sim)$ such that $u^\mathfrak{r} = u^*$~\cite[Section 5.6]{Kellendonk15}.
We also compare our presentation of $KO_{-1}$ to that of Boersema and Loring, who characterise $KO_{-1}(A,\rho)$ 
as equivalence classes of unitaries $u \in M_n(A^\sim)$ such that $u^\rho = u$ for $\rho$ an 
\emph{anti-multiplicative} involution~\cite{BL15}. We can recover this 
picture by defining $\rho = \ast \circ \mathfrak{r}$, so that $u^\mathfrak{r}=u^*$ implies 
that $u^\rho = u$. 
Hence, our Cayley map determines a class $\iota_\ast \circ \zeta_Y [U_S] \in KO_{-1}(A^\mathfrak{r})$.

Now, suppose that $A$ is a Real $C^*$-algebra and 
the real structure in $M_n(A)$ is applied entrywise. Given  $u\in M_n(A^\sim)$ unitary 
and such that $u^{\mathfrak{r}_A} = u^*$, by  Proposition \ref{prop:Cayley_properties}   
there is a well-defined self-adjoint operator $\Ci(u)$,
$$
  \Dom(\Ci(u)) =(u-1)A^n, \qquad \Ci(u)v = i(u+1)(u-1)^{-1}v,\quad v\in\Dom(\Ci(u)).
$$
Using the obvious real structure on the $C^*$-module $\ol{(u-1)A^n_A}$, we check that 
$$
  \Ci(u)^\mathfrak{r} = -i(u^*+1)(u^*-1)^{-1} = -i(u^*+1)u((u^*-1)u)^{-1} = i(u+1)(u-1)^{-1} = \Ci(u)
$$
and so the argument in Proposition \ref{prop:K_1_to_KK} extends to give that 
$$
   \Big( \Cl_{0,1}, \, \ol{(u-1)A^n_A} \otimes \C^2 , \, \begin{pmatrix} 0 & \Ci(u) \\ \Ci(u) & 0 \end{pmatrix} \Big), 
   \quad f\mapsto \begin{pmatrix} 0 & -1 \\ 1 & 0 \end{pmatrix}, 
$$
is an unbounded $KKR$-cycle with real structure on $\C^2$ by complex conjugation.
Following the proof of Theorem \ref{thm:complex_cayley}, we obtain that the maps
\begin{align*}
   &KO_{-1}(A^{\mathfrak{r}_A}) \ni [u] \mapsto  \left[ \big( \Cl_{0,1}, \, \ol{(u-1)A^n_A} \otimes \C^2, \, \Ci(u) \otimes \sigma_1 \big) \right] 
      \in KKR(\Cl_{0,1}, A) \\
   &KKR(\Cl_{0,1}, A) \ni \left[ \big( \Cl_{0,1}, \, Y_A  \otimes \C^2, \, S \otimes \sigma_1 \big) \right] \mapsto 
     \iota_\ast \circ \zeta_Y \left[ \Cd(S) \right] \in KO_{-1}(A^{\mathfrak{r}_A}) 
\end{align*}
are well-defined and mutual inverses. The main difference is that the identity element in 
$KO_{-1}(A^\mathfrak{r})$ is given by the class of $i$ times the unit of $A$,
$[i 1_A]$ and we need to ensure that any homotopy 
of unitaries respects the condition $v_t^\mathfrak{r} = v_t^*$.

Clearly if we ignore the real structure, then we recover our original ungraded Cayley map 
$K_1(A) \to KK^1(\C,A)$ from Theorem \ref{thm:complex_cayley}.
\hfill $\diamond$
\end{example}

\vspace{0.1cm}

\begin{example}[Unitary and projective descriptions of $K_0$] \label{ex:K_0_cayley}
We consider $A\otimes\Cl_{1,0}$ with $A$ ungraded and Real. In this case, 
any odd self-adjoint unitary is of the form $x \otimes e$ with $e$ the generator 
of $\Cl_{1,0}$ and $x=x^*=x^{\rs_A}$ and unitary. Suppose that we have two self-adjoint 
Real unitaries $x \in M_n(A^\sim),\, y \in M_m(A^\sim)$ with $x-y \in M_N(A)$. Then we 
obtain an element
$[x\otimes e] - [y \otimes e] \in DK(A\otimes \Cl_{1,0})$. 

Applying our Cayley map, we first note that, because $x-y$ is compact (over $A$),
$$
   \ol{(x\otimes e - y\otimes e)(A\otimes \Cl_{1,0})^N}_{A \otimes \Cl_{1,0}} \cong 
   \frac{1}{2}\ol{(x\otimes 1 - y \otimes 1)(A\otimes \Cl_{1,0})^N}_{A\otimes \Cl_{1,0}}
$$
Furthermore, on its domain, the Cayley transform $\Ci_{y\otimes e}(x\otimes e)$ acts as the zero-map 
and so our map $DK(A\otimes \Cl_{1,0}) \to KKR(\Cl_{1,0}, A\otimes \Cl_{1,0})$ reduces to 
\begin{align*}
  [x\otimes e] - [y \otimes e] &\mapsto \big[ ( \Cl_1, \, \tfrac{1}{2}(x\otimes 1 -y\otimes 1)(A\otimes \Cl_1)^N_{A\otimes \Cl_1}, \, 0 ) \big] \\
    &= \big[ ( \C, \, \tfrac{1}{2}(1-x)A^n_A \oplus \tfrac{1}{2}(1-y)A^m_A, \, 0 ) \big] \hat\otimes_\C \big[( \Cl_1, \, {\Cl_1}_{\Cl_1}, \, 0 )\big] \\
    &=  \big[ ( \C, \, \tfrac{1}{2}(1-x)A^n_A \oplus \tfrac{1}{2}(1-y)A^m_A, \, 0 ) \big] \hat\otimes_\C \, 1_{KKR(\Cl_{1,0},\Cl_{1,0})}.
\end{align*}
Hence, given  projections $p,\, q \in M_N(A)$, with $p^{\rs_A} = p$ and $q^{\rs_A} = q$, 
we recover the usual map $KO_0(A^{\rs_A}) \to KKR(\C,A)$ via 
the self-adjoint unitaries $x=1-2p$, $y=1-2q$ and our van Daele map. 
If we ignore the real structure, then our Cayley map recovers the 
isomorphism $K_0(A) \to KK(\C, A)$ from Section \ref{subsec:graph_proj}.
\hfill $\diamond$
\end{example}

\vspace{0.1cm}

\begin{example}[$KKR(\Cl_{1,0}, M_2(A)\otimes \Cl_{0,1}) \to KO_2(A^{\mathfrak{r}_A})$]
 Suppose that we have 
the algebra $M_2(A)\otimes \Cl_{0,1}$ with $A$ unital, trivially graded and the real structure on $M_2(A)$ given 
entrywise by $\mathfrak{r}_A$. Any $C^*$-module $Y_{M_2(A)}$ can be decomposed into an 
\emph{ungraded} sum $(X \oplus X)_{M_2(A)}$. We use the presentation  
$\Cl_{0,1} \cong \C\oplus \C$ with grading by the 
flip automorphism and real structure $(\alpha,\beta)^{\rs_{0,1}} = (\ol{\beta},\ol{\alpha})$. 
Then if we take a class in $KKR(\Cl_{1,0},M_2(A)\otimes \Cl_{0,1})$, 
we can write
$$
  \Big( \Cl_{1,0}, \,  \big( (X\oplus X)\otimes \Cl_{0,1}\big)_{M_2(A)\otimes \Cl_{0,1}}, \, T \Big), 
   \qquad \Cl_{1,0}= C^*( e), \,\, e = \begin{pmatrix} 0 & -i\ \\ i & 0 \end{pmatrix} \otimes (1,-1) 
$$
where using the real structure 
$\big((x_1,x_2)\otimes (\alpha,\beta) \big)^\mathfrak{r} = (x_1^{\mathfrak{r}_X},x_2^{\mathfrak{r}_X})\otimes (\ol{\beta},\ol{\alpha})$ 
we see that 
$e^\mathfrak{r} = (-1)^2 e = e$. Similarly the right-action of $\Cl_{0,1}$ is given by  multiplication by 
$1_2\otimes (i,-i)$. The decomposition of the Kasparov module means that 
we can write $T$ in the form $T= S\otimes(1,-1)$, where $S$ is a self-adjoint unbounded 
operator on $X\oplus X$, $S\sigma_2 + \sigma_2 S=0$ and 
$S^\mathfrak{r} = S$. We then compute that 
$$
   \Cd_e(T) =  e(T+e)(T-e)^{-1} = \sigma_2(S+\sigma_2)(S-\sigma_2)^{-1} \otimes (1,-1).
$$
Letting $U_S = \sigma_2(S+\sigma_2)(S-\sigma_2)^{-1} \in \End_{M_2(A)}^0(X\oplus X)^\sim$, we see that 
$U_S^\mathfrak{r} = -U_S$, $U_S^*=U_S$ and 
\begin{align*}
  U_S^2 &= \sigma_2(S+\sigma_2)(S-\sigma_2)^{-1}\sigma_2(S+\sigma_2)(S-\sigma_2)^{-1}  \\
    &= \sigma_2 (S+\sigma_2)(S-\sigma_2)^{-1}(-S+\sigma_2)\sigma_2 (S-\sigma_2)^{-1} \\
    &= -\sigma_2 (S+\sigma_2)\sigma_2(S-\sigma_2)^{-1} 
    = -\sigma_2^2 (-S+\sigma_2)(S-\sigma_2)^{-1} = 1.
\end{align*}
Hence, $U_S$ is an (ungraded) self-adjoint and imaginary unitary. 

Summarising our discussion, given a class in $KKR(\Cl_{1,0},M_2(A)\otimes \Cl_{0,1})$, we can construct 
a unitary operator $V \in \End_{M_2(A)}^0(X\oplus X)^\sim$ such that $V^*=V$ and $V^\mathfrak{r} = -V$. 
Applying the (ungraded) Morita invariance from Equation \eqref{eq:another-fucking-iso}, 
we recover the unitary description of $KO_2(A^{\mathfrak{r}_A})$ as homotopy classes of self-adjoint unitaries 
with $u^\rs = -u$ given in~\cite{BL15, Kellendonk15}. Such self-adjoint and imaginary unitaries 
 can be abstractly characterised as 
spectrally flattened Hamiltonians with a particle-hole symmetry. We will return to this point in 
Section \ref{sec:TI_application}.
\hfill $\diamond$
\end{example}

\vspace{0.1cm}

\begin{example}[$KO_3$ and $KKR$]  \label{ex:KO3_to_KK}
Using the K\"{u}nneth formula for real $K$-theory~\cite{Boersema02}, we can express 
$KO_3(A^{\rs_A}) \cong KO_{-1}(A^{\rs_A} \otimes\mathbb{H})$, where $\mathbb{H}$ is considered 
as a real ungraded $C^*$-algebra. In particular, we use the presentation 
$\mathbb{H} \cong M_2(\C)^{\mathrm{Ad}_{-i \sigma_2} \circ \mathfrak{c}}$ with 
$\mathfrak{c}$ complex conjugation. We 
again note that this is an ungraded isomorphism (putting in a grading, the right hand side 
of the isomorphism becomes $Cl_{0,2}$).

To note this equivalence concretely, 
we use the description of $KO_3$ from \cite[Section 5.6]{Kellendonk15}, which 
characterises $KO_3(A^{\rs_A})$ as equivalence classes of unitaries $u\in M_n(A^\sim)$ such 
that $u^{\rs_A} = -u^*$. Given such a $u \in A$ we consider the matrix 
$v=u \otimes \sigma_1 \in A \otimes M_2(\C)$, where one can check that 
$v^{\mathrm{Ad}_{-i \sigma_2} \circ \rs_A} = v^*$ and 
as such we get a class $[u \otimes \sigma_1] \in KO_{-1}(M_2(A)^{\mathrm{Ad}_{-i \sigma_2}\circ \rs_A} )$. 
Hence we can apply the map from Example \ref{ex:imag_cayley}  to get a Real Kasparov module
$$
  \left( \Cl_{0,1} \, \ol{((u\otimes \sigma_1) - 1_2)M_2(A)}_{A\otimes M_2(\C)} \otimes \C^2, \, 
  \mathcal{C}^{-1}(u\otimes \sigma_1) \otimes \sigma_1 \right)
$$
with $(a_1, a_2)^\rs = (a_1^{\mathrm{Ad}_{-i \sigma_2} \circ \rs_A}, a_2^{\mathrm{Ad}_{-i \sigma_2} \circ \rs_A} )$ and 
the left $\Cl_{0,1}$-action generated by $1\otimes (-i\sigma_2)$.
Passing to real subalgebras and applying the K\"{u}nneth formula, we obtain an element in
$KKO(Cl_{0,1}, A^{\rs_A} \otimes M_2(\C)^{\mathrm{Ad}_{-i \sigma_2} \circ \mathfrak{c}}) \cong KKO(Cl_{0,1}, A^{\rs_A} \otimes Cl_{0,4})$. 
\hfill $\diamond$
\end{example}

\subsection{Short exact sequences and boundary maps} \label{subsec:DK_bdry}

Here we consider the compatibility of our Cayley isomorphism with the boundary map 
of van Daele $K$-theory and $KK$-theory. Suppose that
\begin{equation}  \label{eq:gradedSES}
  0 \to I \to E \xrightarrow{q} A \to 0 
\end{equation}
is a short exact sequence of graded $C^*$-algebras with a completely positive linear splitting.
If the algebras 
possess a real structure, 
then we also assume that these maps are 
equivariant with respect to this structure. 
By~\cite[Theorem 1.1]{Skandalis85} 
there are connecting homomorphisms 
$KKR(I,B) \xrightarrow{\delta} KKR(A, B\hat\otimes \Cl_{1,0})$ 
and $KKR(B,A) \xrightarrow{\delta} KKR(B, I\hat\otimes \Cl_{1,0})$. 
We will consider a special case of the latter of these boundary maps using 
van Daele $K$-theory and our 
Cayley isomorphism. 
We note that boundary maps in van Daele $K$-theory and their
compatibility with Kasparov theory has already been extensively studied by Kubota~\cite[Section 5]{Kubota15a}. 
In particular, the boundary maps in van Daele $K$-theory inherit many properties from 
$KK$-theory such as naturality.

If the quotient algebra $A$ is unital, we assume that it is balanced (it contains an OSU). 
If $A$ is non-unital, we assume that $\mathrm{Mult}(A)$ contains an OSU $e$
and use the description of van Daele that includes a 
base point $DK_e(A) \cong DK(A^{\sim e}, A^{\sim e} / A)  \cong DK(A)$ from Lemma \ref{lem:unitisation}, where 
$$
DK_e(A) 
=  \big\{[x]-[y]\in GV_{e}(A^{\sim e}):\,x-(e_k\oplus -e_{n-k}),\,y-(e_k\oplus -e_{n-k})\in M_n(A),\ \mbox{some }n,k\big\}
$$
and $A^{\sim e} \subset \mathrm{Mult}(A)$  the algebra generated by $A$ and $e$. 
 Let us recall the formula 
for the boundary map in van Daele $K$-theory.

\begin{lemma}[\cite{vanDaele2}, Proposition 3.4] \label{lem:vD_bdry_map} 
The boundary map of the 
short exact sequence \eqref{eq:gradedSES}, $\delta: DK(A) \to DK(I\hat\otimes \Cl_{1,0})$, is given by 
\begin{equation} \label{eq:vD_bdry_map}
\delta([x_1]-[x_2]) = [Y_1] - [Y_2], \qquad   
Y_i =   - \exp(\pi \tilde{x}_i  \hat\otimes \kg)(1 \hat\otimes\kg),
\end{equation}
where $\tilde{x}_i\in E$ is an odd self-adjoint lift of $x_i$ and $\kg$ is the odd 
generator of $\Cl_{1,0}$. We may assume that $\| \tilde{x}_i\|=1$.
\end{lemma}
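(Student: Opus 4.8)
The plan is to follow van Daele's original argument (\cite{vanDaele2}, Proposition 3.4), translating it into Roe's relative picture so that it meshes cleanly with the conventions already in use. First I would fix the setup: given $[x_1]-[x_2]\in DK(A)$ with both $x_i$ OSUs in (a matrix amplification of) $A$ or $A^{\sim e}$, choose odd self-adjoint lifts $\tilde x_i\in E$ with $\|\tilde x_i\|=1$; such lifts exist because the short exact sequence is completely positive split, so one can first take any self-adjoint lift and then average against the grading to make it odd, rescaling to have norm $1$. The key elementary observation is that the image $q(\tilde x_i)=x_i$ satisfies $x_i^2=1$, so $\tilde x_i^2-1\in I$, i.e. $\tilde x_i$ is an odd self-adjoint contraction that is "unitary modulo $I$." The boundary class should then be produced by applying a canonical odd self-adjoint unitary-valued function of $\tilde x_i\hot\kg$ over $E\hot\Cl_{1,0}$ whose value jumps according to the spectrum of $\tilde x_i$.

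The main computation is to verify that $Y_i:=-\exp(\pi\tilde x_i\hot\kg)(1\hot\kg)$ is an OSU over $E\hot\Cl_{1,0}$ and that $Y_1-Y_2\in I\hot\Cl_{1,0}$, so that $[Y_1]-[Y_2]$ is a legitimate element of $DK(I\hot\Cl_{1,0})=DK(E\hot\Cl_{1,0},\, E\hot\Cl_{1,0}/I\hot\Cl_{1,0})$ in the relative picture of Proposition~\ref{prop:excision}. For this I would use that $\tilde x_i\hot\kg$ is self-adjoint and odd (product of two odd self-adjoint elements that anti-commute, hence self-adjoint, and odd$\times$odd commuting factors are handled by the graded tensor product), so $\exp(\pi\tilde x_i\hot\kg)$ is an even unitary; multiplying by the odd self-adjoint unitary $1\hot\kg$ gives an odd unitary, and a short calculation with $(\tilde x_i\hot\kg)^2=\tilde x_i^2\hot 1$ shows $Y_i^*=Y_i$. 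To see $Y_1-Y_2\in I\hot\Cl_{1,0}$: modulo $I$ we have $\tilde x_i^2=1$, so $\exp(\pi\tilde x_i\hot\kg)\equiv\cos(\pi)\,1 + \sin(\pi)\,\tilde x_i\hot\kg = -1$ modulo $I\hot\Cl_{1,0}$ (using the spectral identity $\exp(\pi s\hot\kg)=\cos(\pi|s|)1+\sin(\pi|s|)|s|^{-1}s\hot\kg$ valid when $s^2=|s|^2$ is invertible, here $|s|=1$), whence $Y_i\equiv (1\hot\kg)$ for \emph{both} $i$ and the difference lies in the ideal. This pins down the correct normalisation of the sign.

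The remaining point — the genuine content — is \textbf{well-definedness and agreement with the $KK$-theoretic/exact-sequence boundary map}, and this is where I expect the real work. One must check: (i) independence of the choice of odd self-adjoint contraction lift $\tilde x_i$ (two such lifts are connected by a straight-line homotopy of odd self-adjoint contractions staying over $x_i$, since the affine combination is still a lift, is still odd and self-adjoint, and has norm $\le 1$; this pushes through the functional calculus to a homotopy of OSUs in $E\hot\Cl_{1,0}$ with difference in $I\hot\Cl_{1,0}$ throughout); (ii) that homotopic $x_i$ in $A$ give homotopic $Y_i$, by lifting the homotopy compatibly — again using the completely positive splitting to get a continuous family of contractive lifts; (iii) additivity under $\oplus$, which is immediate. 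Finally, to identify this with the Skandalis/Kasparov connecting map $KKR(B,A)\xrightarrow{\delta}KKR(B,I\hot\Cl_{1,0})$ under the Cayley isomorphism $\Chris$ of Theorem~\ref{thm:KK_to_DK_iso}, I would either cite van Daele's proof directly (the Lemma is attributed to \cite{vanDaele2}) or, for completeness, recognise $\exp(\pi\tilde x_i\hot\kg)(1\hot\kg)$ as the Cayley-type exponential implementing the mapping-cone description of the boundary map: the mapping cone $C_q=\{(e,f)\in E\oplus C_0((0,1],A): f(1)=q(e)\}$ carries a tautological class, and evaluating the standard rotation homotopy $e^{i\pi t(\cdot)}$ at $t=1$ produces exactly the stated formula. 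The hardest single step is managing the bookkeeping in the non-unital / not-balanced case — ensuring the base point $e$, the passage to $A^{\sim e}$ via Lemma~\ref{lem:unitisation}, and the extra $\Cl_{1,0}$ factor all interact consistently with the lift $\tilde x_i$ — but this is organisational rather than conceptual, since every ingredient (excision, the relative group, the base-point picture) has already been set up in Section~\ref{sec:prelim}.
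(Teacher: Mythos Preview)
The paper does not prove this lemma at all: it is stated as a direct citation to \cite[Proposition~3.4]{vanDaele2}, followed only by a one-line remark explaining why the displayed formula looks simpler than van Daele's original, namely the Taylor-series identity
\[
-\big( \sin( \pi \tilde{x} \hox 1) + (1\hox \kg) \cos( \pi \tilde{x} \hox 1) \big) = - \exp( \pi \tilde{x} \hox \kg) (1 \hox \kg).
\]
Your proposal therefore goes well beyond what the paper does, supplying an actual argument where the paper is content to cite. The overall strategy you outline (lift, check $Y_i$ is an OSU, check $Y_i\equiv 1\hox\kg$ modulo the ideal, verify independence of choices) is correct and is indeed how van Daele proceeds.

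There is one genuine slip in your computation. You assert that $\tilde x_i\hox\kg$ is ``self-adjoint and odd''. In the graded tensor product it is neither: since both factors are odd, the element is \emph{even}, and the graded adjoint rule $(a\hox b)^* = (-1)^{|a||b|}a^*\hox b^*$ makes it \emph{skew-adjoint}. Fortunately your conclusion survives: skew-adjointness is exactly what makes $\exp(\pi\,\tilde x_i\hox\kg)$ unitary, and evenness is what makes the product with $1\hox\kg$ odd. The correct expansion is $\exp(\pi\,\tilde x\hox\kg)=\cos(\pi\tilde x)\hox 1+\sin(\pi\tilde x)\hox\kg$ (no absolute values), obtained from $(\tilde x\hox\kg)^{2j}=(-1)^j\tilde x^{2j}\hox 1$; this also gives your ``modulo $I$'' computation directly, since $\tilde x^2\equiv 1$ forces $\cos(\pi\tilde x)\equiv -1$ and $\sin(\pi\tilde x)\equiv 0$. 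With that correction your sketch is sound.
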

If $e$ is a choice of base point in $A$ which lifts to an OSU in $E$ (we may simply take the image of a base point in $E$) then one easily finds
$\delta([x]-[e]) = [Y] - [1\hot \kg]$ and so we may simplify the formulas (as does van Daele) by writing $\delta([x]) = [Y]$.
We also remark that Equation \eqref{eq:vD_bdry_map} is simpler than the formula given in~\cite[Proposition 3.4]{vanDaele2}. 
This is because we have chosen $1 \hox \kg$ as a constant basepoint and noting that
$$
   -\big( \sin( \pi \tilde{x} \hox 1) + (1\hox \kg) \cos( \pi \tilde{x} \hox 1) \big) = - \exp( \pi \tilde{x} \hox \kg) (1 \hox \kg),
$$ 
which can be shown by the Taylor series expansion.

\begin{prop} \label{prop:vDboundary_to_KK}
Let $x \in M_n(A^{\sim e})$ be an OSU. 
Under the isomorphism of Theorem \ref{thm:KK_to_DK_iso} and 
the identification $KKR(\Cl_{1,0}, I \hat\otimes \Cl_{1,0}) \cong KKR(\C, I)$, the class 
$\delta([x]) \in DK(I \hat\otimes \Cl_{1,0})$ can be identified with the class of the unbounded Kasparov module
$$
 \Big(\C, \, \overline{\cos (\tfrac{\pi}{2} \tilde{x}) I^n_I}, \, \tan(\tfrac{\pi}{2} \tilde{x} ) \Big),
$$
with $\tilde{x} \in M_n(E)$ an odd self-adjoint lift of $x$.
\end{prop}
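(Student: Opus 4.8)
The plan is to compute the van Daele boundary class $\delta([x])$ using Lemma~\ref{lem:vD_bdry_map}, then apply the map $\Adam$ from Lemma~\ref{lem:KK_to_Roe} together with the identification $KKR(\Cl_{1,0}, I\hot\Cl_{1,0})\cong KKR(\C,I)$, and finally simplify the resulting Cayley transform. By Lemma~\ref{lem:vD_bdry_map} (using a base point in $E$ lifting one in $A$), $\delta([x]) = [Y]$ with $Y = -\exp(\pi\tilde x\hot\kg)(1\hot\kg) = -\big(\sin(\pi\tilde x\hot 1) + (1\hot\kg)\cos(\pi\tilde x\hot 1)\big)$, an OSU over $(I\hot\Cl_{1,0})^{\sim}$ (or the appropriate base-point unitisation). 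Here $\tilde x$ is an odd self-adjoint lift with $\|\tilde x\| = 1$. I would first record that $Y$ differs from the constant base point $1\hot\kg$ by $-\sin(\pi\tilde x)\hot 1 - (\cos(\pi\tilde x)-1)\hot\kg$, which lies in $M_n(I)\hot\Cl_{1,0}$ since $q(\tilde x) = x$ is an OSU and hence $\sin(\pi x) = 0$, $\cos(\pi x) = -1$ in the quotient — wait, more carefully, $x^2 = 1$ so $\sin(\pi x) = 0$ and $\cos(\pi x) = \cos(\pi)\cdot\frac{1+x}{2}+\cos(-\pi)\cdot\frac{1-x}{2}\cdot$, i.e. $\cos(\pi x) = -1$; thus indeed $Y - (-1\hot\kg) \in M_n(I\hot\Cl_{1,0})$ and $\delta([x])\in DK(I\hot\Cl_{1,0})$ is a legitimate relative class.

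Next I would apply $\Adam$ from Lemma~\ref{lem:KK_to_Roe}, which sends the van Daele class $[Y]-[1\hot\kg]$ (an OSU over the module $(I\hot\Cl_{1,0})^n$, or a suitable stabilisation) to the Kasparov module built from the inverse Cayley transform $\Ci_{W}(Y)$ where $W$ is the base point OSU, here $W = 1_n\hot\kg$. Concretely, the relevant module is $\ol{(Y - 1\hot\kg)(I\hot\Cl_{1,0})^n}$ and the operator is $\Ci_{1\hot\kg}(Y) = (1\hot\kg)(Y + 1\hot\kg)(Y - 1\hot\kg)^{-1}$. The key computation is to evaluate this expression. Writing $Y = -\sin(\pi\tilde x)\hot 1 - \cos(\pi\tilde x)\hot\kg$ and using that $\tilde x\hot 1$ commutes with $1\hot\kg$ while $\kg^2 = 1$, one gets $Y + 1\hot\kg = -\sin(\pi\tilde x)\hot 1 + (1-\cos(\pi\tilde x))\hot\kg$ and $Y - 1\hot\kg = -\sin(\pi\tilde x)\hot 1 - (1+\cos(\pi\tilde x))\hot\kg$. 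Using half-angle identities $1-\cos(\pi\tilde x) = 2\sin^2(\tfrac{\pi}{2}\tilde x)$, $1+\cos(\pi\tilde x) = 2\cos^2(\tfrac{\pi}{2}\tilde x)$, $\sin(\pi\tilde x) = 2\sin(\tfrac{\pi}{2}\tilde x)\cos(\tfrac{\pi}{2}\tilde x)$, I expect after a short manipulation (and inverting the off-diagonal-type $2\times 2$ structure in the $\Cl_{1,0}$ variable, cf.\ Remarks~\ref{rmk:cay-grad-ungr} and~\ref{rmk:cay-gr-ungr-2}) that $\Ci_{1\hot\kg}(Y)$ is the operator $\tan(\tfrac{\pi}{2}\tilde x)$ acting on the module $\ol{\cos(\tfrac{\pi}{2}\tilde x)I^n}$, with the remaining $\Cl_{1,0}$-tensor factor collapsing under the Morita equivalence $KKR(\Cl_{1,0},I\hot\Cl_{1,0})\cong KKR(\C,I)$. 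One must check that $\tan(\tfrac{\pi}{2}\tilde x)$ is a well-defined self-adjoint regular operator with the correct domain $\cos(\tfrac{\pi}{2}\tilde x)I^n$ and compact resolvent on $\ol{\cos(\tfrac{\pi}{2}\tilde x)I^n}$ — but these follow from Lemma~\ref{lem-Cd2} applied to the OSU $Y$, since $(1+\Ci_{1\hot\kg}(Y)^2)^{-1/2} = \tfrac12|Y - 1\hot\kg|$ translates to $|\cos(\tfrac{\pi}{2}\tilde x)|$, and $\cos(\tfrac{\pi}{2}\tilde x) \in M_n(I)$ modulo the part vanishing in the quotient (indeed $\cos(\tfrac\pi2 x) = 0$ since $x^2=1$ forces $\cos(\tfrac\pi2 x) = \cos(\tfrac\pi2)\cdot\tfrac{1+x}2 + \cos(-\tfrac\pi2)\tfrac{1-x}2 = 0$), so it is genuinely compact as an endomorphism of the module over $I$.

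The main obstacle I anticipate is the bookkeeping around base points and unitisations: $A$ may be non-unital and not balanced, so $\Adam$ as stated in Lemma~\ref{lem:KK_to_Roe} and the boundary map of Lemma~\ref{lem:vD_bdry_map} are phrased with respect to possibly different auxiliary Clifford stabilisations, and one needs Lemma~\ref{lem:unitisation} together with the naturality of boundary maps (as in Kubota~\cite{Kubota15a}) to reconcile them. A related subtlety: the lift $\tilde x$ is only self-adjoint with $\|\tilde x\|=1$, not itself an OSU, so $\tan(\tfrac\pi2\tilde x)$ is genuinely unbounded and regularity is not automatic — it must be deduced from the boundedness and unitarity of $Y$ via the graded Cayley machinery of Lemma~\ref{lem-Cd2} rather than from functional calculus on $\tilde x$ alone (whose spectrum could hit $\pm 1$). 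Once the identification of modules and operators is pinned down, verifying that the assignment is independent of the choice of lift $\tilde x$ reduces to the homotopy invariance already built into Lemma~\ref{lem:KK_to_Roe} and the well-definedness of $\delta$, so no new argument is needed there. I would also remark that the class is manifestly independent of the choice of lift because any two self-adjoint lifts are connected by a path of self-adjoint lifts, which $\Adam$ sends to an operator homotopy of Kasparov modules.
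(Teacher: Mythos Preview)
Your approach is essentially the paper's: compute $\delta([x])$ via Lemma~\ref{lem:vD_bdry_map}, apply the inverse Cayley transform $\Ci_{1\hat\otimes\kg}(Y)$, and strip off the $\Cl_{1,0}$ factor. Two corrections and one comparison are in order.

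First, you invoke $\Adam$ from Lemma~\ref{lem:KK_to_Roe}, but $\Adam$ goes from $KKR$ to $DK$; the map you actually describe (and need) is $\Chris$ from Lemma~\ref{lem:Roe_to_KK}. Second, your claim that $\tilde x\hat\otimes 1$ commutes with $1\hat\otimes\kg$ is false in the \emph{graded} tensor product: both $\tilde x$ and $\kg$ are odd, so $(1\hat\otimes\kg)(\tilde x\hat\otimes 1)=-\tilde x\hat\otimes\kg$ and they anti-commute. This sign will propagate through your half-angle manipulation, and while the computation can be salvaged (the cross terms in products like $(s\hat\otimes 1+c\hat\otimes\kg)^2$ cancel precisely because $s=\sin(\tfrac{\pi}{2}\tilde x)$ is odd and $c=\cos(\tfrac{\pi}{2}\tilde x)$ is even), you must track parities carefully.

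The paper sidesteps this bookkeeping by working directly with the single element $\tilde x\hat\otimes\kg$: since $(\tilde x\hat\otimes\kg)^{2j+1}=(-1)^j\tilde x^{2j+1}\hat\otimes\kg$, one gets $\Ci_{1\hat\otimes\kg}(Y)=-(1\hat\otimes\kg)\tanh(\tfrac{\pi}{2}\tilde x\hat\otimes\kg)$ and the Taylor series immediately converts $\tanh$ to $\tan$, giving $\tan(\tfrac{\pi}{2}\tilde x)\hat\otimes 1$ on the domain $\cos(\tfrac{\pi}{2}\tilde x)I^n\hat\otimes\Cl_{1,0}$. The Kasparov module then factorises as a product with $1_{KKR(\Cl_{1,0},\Cl_{1,0})}$, which is exactly the identification $KKR(\Cl_{1,0},I\hat\otimes\Cl_{1,0})\cong KKR(\C,I)$. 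Your half-angle route reaches the same place but the $\tanh$ trick is cleaner. (Minor: your base-point check should read $Y-1\hat\otimes\kg\in M_n(I\hat\otimes\Cl_{1,0})$, not $Y-(-1\hat\otimes\kg)$; $q(Y)=1\hat\otimes\kg$ since $\cos(\pi x)=-1$.)
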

\begin{proof}
We use Lemma \ref{lem:vD_bdry_map} and calculate the inverse Cayley transform, where 
\begin{align*}
  \Ci_{1 \hat\otimes\kg}(Y) =& (1 \hat\otimes \kg) \big(-\exp(\pi \tilde{x} \,\hox\, \kg)(1 \hat\otimes \kg) + (1\hat\otimes \rho)\big)
   \big( -\exp(\pi \tilde{x} \,\hox\, \kg)(1\hat\otimes \rho) -(1\hat\otimes \rho) \big)^{-1}\\
 &= (1\hat\otimes \kg)\big( -\exp(\pi \tilde{x} \,\hox\, \kg) +1 \big) \big( -\exp(\pi \tilde{x}\,\hox\, \kg) - 1\big)^{-1} \\
 &=  -(1\hat\otimes \kg) \tanh( \tfrac{\pi}{2} \tilde{x} \,\hox\, \kg)
\end{align*}
with domain ${\sinh (\frac{\pi}{2} \tilde{x} \,\hox\,  \kg)(I^n \hat\otimes  \Cl_{1,0})}$. 
Using that $\tilde{x}$ and $\kg$ are odd, 
$(\tilde{x}\hat\otimes \kg)^{2j+1} = (-1)^j \tilde{x}^{2j+1} \hat\otimes \kg$. Therefore,
$$
  -(1\hat\otimes \kg) (-1)^{j} (\tilde{x} \hat\otimes \kg)^{2j+1}  = 
  -(1\hat\otimes \kg) (-1)^j (-1)^j (\tilde{x}^{2j+1} \hat\otimes \kg) = \tilde{x}^{2j+1}\hat\otimes 1
$$
and so by the Taylor series expansion, 
$-(1\hat\otimes \kg) \tanh( \frac{\pi}{2} \tilde{x} \,\hox\, \kg) = \tan(\frac{\pi}{2} \tilde{x})\, \hat\otimes \,1$ 
on the domain $\cos (\tfrac{\pi}{2} \tilde{x}) I^n_I\hat\otimes \Cl_1$. Hence our Kasparov module can be 
factorised
$$
  \big( \Cl_{1,0}, \ol{\cos (\tfrac{\pi}{2} \tilde{x}) I^n_I} \hat\otimes {\Cl_{1,0}}_{\Cl_{1,0}}, \, \tan(\tfrac{\pi}{2} \tilde{x})\, \hat\otimes \,1 \big) 
  =   \big(\C, \, \overline{\cos (\tfrac{\pi}{2} \tilde{x}) I^n_I}, \, \tan(\tfrac{\pi}{2} \tilde{x} ) \big) \hat\otimes_\C 
    ( \Cl_{1,0}, {\Cl_{1,0}}_{\Cl_{1,0}}, 0), 
$$
and removing the element $1_{KKR(\Cl_{1,0},\Cl_{1,0})}$ gives the 
identification $KKR(\Cl_{1,0}, I \hat\otimes \Cl_{1,0}) \cong KKR(\C, I)$.
\end{proof}

\begin{cor} \label{cor:KK_bdry_cayley}
Let $(\Cl_{1,0}, X_A, T)$ be an 
(unbounded) Kasparov module such that the operator $T$ anti-commutes with 
the left Clifford generator $e$. 
The image of this Kasparov module under the composition 
$KKR(\Cl_{1,0}, A) \xrightarrow{\delta_{KK}} KKR( \Cl_{1,0}, I \hat\otimes \Cl_{1,0}) \xrightarrow{\simeq} KKR(\C, I)$ 
can be represented by the Kasparov module
$$
  \Big(\C, \, \overline{\cos (\tfrac{\pi}{2} \widetilde{\Cd}_e(T) ) I_I }, \, 
    \tan(\tfrac{\pi}{2} \widetilde{\Cd}_e(T) ) \Big)
$$ 
with $\widetilde{\Cd}_e(T) \in E$ a lift of $\Cd_e(T)$.
\end{cor}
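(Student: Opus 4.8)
The plan is to reduce the statement to Proposition~\ref{prop:vDboundary_to_KK} by transporting everything through the Cayley isomorphism of Theorem~\ref{thm:KK_to_DK_iso} and invoking the compatibility of the two boundary maps. Concretely, take $[(\Cl_{1,0},X_A,T)]\in KKR(\Cl_{1,0},A)$ and put ${J_X}=\ol{{\rm span}(X|X)_A}$. By the definition of $\Adam$ in Lemma~\ref{lem:KK_to_Roe},
$$\Adam\big([(\Cl_{1,0},X_A,T)]\big)=\iota_*\circ\zeta_X\big([\Cd_e(T)]-[e]\big)\in DK(A),$$
which, after the Morita reduction $\zeta_X$ and the inclusion $\iota:{J_X}\hookrightarrow A$ (as tracked in Lemmas~\ref{lem:Cay-iota} and~\ref{lem:Cay-zeta}), is represented by the odd self-adjoint unitary $\Cd_e(T)$ viewed as an OSU over $M_n(A^{\sim e})$ for a suitable base point $e$ in $\mathrm{Mult}(A)$ and matrix size $n$. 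I would then record that the $KK$-boundary map $\delta_{KK}$ of the semi-split sequence~\eqref{eq:gradedSES} and the van Daele boundary map $\delta_{DK}$ of Lemma~\ref{lem:vD_bdry_map} are intertwined by the Cayley isomorphism, that is $\Adam\circ\delta_{KK}=\delta_{DK}\circ\Adam$ (with the evident identification of targets); this is the content of Kubota's analysis~\cite[Section~5]{Kubota15a}, and may also be seen directly since both boundary maps are instances of the mapping-cone/Puppe construction while $\Adam$ and $\Chris$ are natural transformations, being defined cycle-by-cycle by the Cayley transform.

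Granting this, $\Adam\big(\delta_{KK}[(\Cl_{1,0},X_A,T)]\big)=\delta_{DK}\big([\Cd_e(T)]\big)$ in $DK(I\hat\otimes\Cl_{1,0})$. Now apply Proposition~\ref{prop:vDboundary_to_KK} with $x=\Cd_e(T)$: choosing an odd self-adjoint lift $\widetilde{\Cd}_e(T)\in M_n(E)$ of $\Cd_e(T)$ along the completely positive splitting and normalising it so that $\|\widetilde{\Cd}_e(T)\|=1$, the class $\delta_{DK}([\Cd_e(T)])$ corresponds, under Theorem~\ref{thm:KK_to_DK_iso} together with the reduction $KKR(\Cl_{1,0},I\hat\otimes\Cl_{1,0})\cong KKR(\C,I)$ (removing the element $1_{KKR(\Cl_{1,0},\Cl_{1,0})}$), to
$$\Big(\C,\,\ol{\cos(\tfrac{\pi}{2}\widetilde{\Cd}_e(T))I^n_I},\,\tan(\tfrac{\pi}{2}\widetilde{\Cd}_e(T))\Big).$$
Since the Morita equivalence $\zeta$ built into $\Adam$ contracts matrices over $I$ back to $I$, the matrix size $n$ disappears upon tracing back through $\Chris$, and one is left precisely with the asserted Kasparov module $\big(\C,\ol{\cos(\tfrac{\pi}{2}\widetilde{\Cd}_e(T))I_I},\tan(\tfrac{\pi}{2}\widetilde{\Cd}_e(T))\big)$, which is the content of the corollary.

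The main obstacle is the commutativity $\Adam\circ\delta_{KK}=\delta_{DK}\circ\Adam$ of the boundary-map square: although both $\delta$'s are constructive (lift, then apply an explicit formula) and the Cayley transform commutes with lifting, one must verify that the normalisations in Lemma~\ref{lem:vD_bdry_map} and in Skandalis's description of $\delta_{KK}$ agree, including the Clifford index shift $KKR(\Cl_{1,0},I\hat\otimes\Cl_{1,0})\cong KKR(\C,I)$. I would handle this by citing Kubota and fixing signs on a single generator, or, as an alternative self-contained route, by exhibiting directly the homotopy between a lift of $\Cd_e(T)$ and the bounded transform of $\tan(\tfrac{\pi}{2}\widetilde{\Cd}_e(T))$ — essentially the computation already carried out in Proposition~\ref{prop:vDboundary_to_KK}. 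A secondary, purely bookkeeping, point is that $\Cd_e(T)$ a priori lies in $\End^0_A(X)^\sim$ rather than literally in a matrix algebra over $A$; this is absorbed by the Morita steps $\zeta_X$, $\zeta^{KK}_X$ already used in defining $\Adam$ (Lemmas~\ref{lem:Cay-iota} and~\ref{lem:Cay-zeta}), with which Proposition~\ref{prop:vDboundary_to_KK} is compatible, so that the lift $\widetilde{\Cd}_e(T)\in E$ named in the statement is the legitimate object to work with.
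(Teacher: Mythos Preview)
Your proposal is correct and follows essentially the same route as the paper: invoke the commutativity of the square relating $\delta_{KK}$ and $\delta_{DK}$ via the Cayley isomorphism (the paper cites \cite[Proposition~5.13]{Kubota15a} for exactly this), then apply Proposition~\ref{prop:vDboundary_to_KK} with $x=\Cd_e(T)$. Your additional remarks about the Morita bookkeeping and where $\Cd_e(T)$ lives are more explicit than the paper's terse two-line proof, but they do not change the strategy.
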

\begin{proof}
By~\cite[Proposition 5.13]{Kubota15a}, the diagram
\[
  \xymatrix{
     KKR(\Cl_{1,0}, A) \ar[d]_{\simeq} \ar[rr]^{\delta_{KK}} & &  KKR(\Cl_{1,0}, I \hat\otimes \Cl_{1,0}) \\
      DK(A) \ar[rr]_{\delta_{DK}}  & &  DK(I \hat\otimes \Cl_{1,0}) \ar[u]_{\simeq}
  }
\]
is commutative. The result then immediately follows from Proposition \ref{prop:vDboundary_to_KK}.
\end{proof}

A representative of the boundary map considered in Corollary \ref{cor:KK_bdry_cayley} is 
guaranteed to exist by~\cite[Theorem 1.1]{Skandalis85}. The advantage of the corollary 
is that it gives an explicit representative that is constructed from the given 
unbounded Kasparov module.
To finish this section, we also give a simple representative of the boundary map in 
van Daele as a bounded Kasparov module. 

\begin{prop} \label{prop:bdd_bdry_Kasmod}
Let $x \in M_n(A^{\sim e})$ be an OSU.
The class $\delta([x]) \in DK(I\hat\otimes \Cl_{1,0}) \cong KKR(\C, I )$ 
can be identified with the element
$$
   \big[( \C, \, I^n_I, \, \tilde{x}  )\big]  \in KKR(\C, I)
$$
with $\tilde{x}\in M_n(E)$  an odd self-adjoint lift of $x$.
\end{prop}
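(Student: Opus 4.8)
The plan is to relate the claimed bounded Kasparov module $(\C, I^n_I, \tilde x)$ to the unbounded module $(\C, \overline{\cos(\tfrac\pi2\tilde x) I^n_I}, \tan(\tfrac\pi2\tilde x))$ from Proposition \ref{prop:vDboundary_to_KK}, which we already know represents $\delta([x])$. First I would check that $(\C, I^n_I, \tilde x)$ is indeed a bounded Kasparov module: $\tilde x$ is odd and self-adjoint by choice of lift, $\tilde x^2 - 1_X$ lies in $M_n(I)$ since $x$ is an OSU in $M_n(A^{\sim e})$ and $\tilde x$ is a lift of $x$, and similarly $\tilde x - (\text{anything in the image of }E)$ gives the required compactness/commutator conditions because $q(\tilde x^2 - 1) = x^2 - 1 = 0$, so $\tilde x^2 - 1 \in M_n(I)$ as needed for the Kasparov condition over $I$. (If $\tilde x$ is not quite self-adjoint one replaces it by $\tfrac12(\tilde x + \tilde x^*)$; if not quite odd one averages with the grading; these adjustments do not change $q(\tilde x)=x$.)

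Next, the key step is to produce an operator homotopy, or a direct argument, identifying $[(\C, I^n_I, \tilde x)]$ with the class in Proposition \ref{prop:vDboundary_to_KK}. The natural route is to apply the inverse Cayley transform philosophy in reverse: the unbounded operator $\tan(\tfrac\pi2\tilde x)$ has bounded transform
$$
  \tan(\tfrac\pi2\tilde x)\bigl(1 + \tan^2(\tfrac\pi2\tilde x)\bigr)^{-1/2} = \sin(\tfrac\pi2\tilde x)
$$
acting on $\overline{\cos(\tfrac\pi2\tilde x)I^n_I}$, whereas the bounded module uses $\tilde x$ directly on all of $I^n_I$. So I would exhibit the norm-continuous path $t\mapsto \sin(\tfrac\pi2 \tilde x^{(t)})$ (or more directly a path of bounded Fredholm-type operators) interpolating between $\sin(\tfrac\pi2\tilde x)$ on the cut-down module and a phase-type operator built from $\tilde x$ on $I^n_I$, using that both $\tilde x$ and $\sin(\tfrac\pi2\tilde x)$ are odd self-adjoint with square equal to $1$ modulo $M_n(I)$. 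Equivalently: the function $f_s(t) = \sin(\tfrac\pi2 t)$ for $s=1$ and $f_0(t)=t$ are both odd, continuous, and map $\{-1,+1\}$ to $\{-1,+1\}$, and the straight-line homotopy $f_s = (1-s)\,\mathrm{id} + s\sin(\tfrac\pi2\,\cdot)$ between them consists of such functions (monotone increasing on $[-1,1]$, odd, $f_s(\pm1) = \pm1$); applying $f_s$ to the bounded self-adjoint operator $\tilde x/\|\tilde x\|$ gives an operator homotopy of bounded Kasparov modules over $I$. Here one uses the normalisation $\|\tilde x\| = 1$ already granted in Lemma \ref{lem:vD_bdry_map}, so $\tilde x$ is itself a "Fredholm-type" self-adjoint contraction with $1 - \tilde x^2 \in M_n(I)$.

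The one subtlety, and I expect the main obstacle, is the change of module: the module $\overline{\cos(\tfrac\pi2\tilde x) I^n_I}$ in Proposition \ref{prop:vDboundary_to_KK} is a proper submodule of $I^n_I$ (the closure of the domain of the unbounded operator), while the bounded module uses all of $I^n_I$. The two differ exactly by the kernel/range behaviour of $\cos(\tfrac\pi2\tilde x)$, i.e.\ where $\tilde x$ has eigenvalue $\pm1$, and on that complementary part the operator $\tilde x$ acts as a self-adjoint unitary, hence gives a degenerate Kasparov module. So I would argue that $(\C, I^n_I, \tilde x) \cong (\C, \overline{\cos(\tfrac\pi2\tilde x)I^n_I}, \sin(\tfrac\pi2\tilde x)) \oplus (\text{degenerate})$ up to operator homotopy, the degenerate summand being killed in $KKR$, and the first summand being the bounded transform of the module in Proposition \ref{prop:vDboundary_to_KK} and hence representing the same $KKR$-class. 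Combining, $[(\C,I^n_I,\tilde x)] = \delta([x])$ in $KKR(\C, I)$. As a final remark I would note that the argument does not depend on the choice of lift $\tilde x$, since any two self-adjoint odd lifts differ by an element of $M_n(I)$ and are joined by a linear path of such lifts, giving an operator homotopy; and that everything is compatible with real structures because $\tilde x$, $\sin$, $\tan$ and the homotopy functions $f_s$ are all real.
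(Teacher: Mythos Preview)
Your proposal is correct and follows essentially the same route as the paper: take the bounded transform of $\tan(\tfrac{\pi}{2}\tilde x)$ to obtain $\sin(\tfrac{\pi}{2}\tilde x)$, then straight-line homotope to $\tilde x$. You are in fact more careful than the paper, which simply asserts that the bounded transform lands in $\End_I(I^n)$ and takes the homotopy, whereas you explicitly address the passage from the submodule $\overline{\cos(\tfrac{\pi}{2}\tilde x)I^n_I}$ to $I^n_I$ via the degenerate complement.
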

\begin{proof}
Applying the the bounded transform to the Kasparov module from
Proposition \ref{prop:vDboundary_to_KK}, we get the bounded 
operator $\sin(\tfrac{\pi}{2} \tilde{x}) \in \End_I(I^n)$.
We can then take a straight-line operator homotopy from 
$\sin(\tfrac{\pi}{2} \tilde{x})$ to $\tilde{x}$.
\end{proof}

Proposition \ref{prop:bdd_bdry_Kasmod} implies that the non-triviality of the 
class $\delta([x])$ as an element of $KKR(\C, I)$ is entirely contained in the 
failure of the lift $\tilde{x}$ to be invertible.
 For the case of $x$ related to a bulk Hamiltonian, such a condition 
can be linked to the presence of topological boundary spectrum.

\section{Applications to topological phases}  \label{sec:TI_application}

Van Daele $K$-theory has recently been employed by the second 
author and others to provide a 
classification of topological phases of materials with 
respect to an algebra of observables $A$~\cite{Kellendonk15, Kellendonk16, AlldridgeMax}. 
We now use our Cayley isomorphism to consider the corresponding 
class in $KK$-theory.

One reason for 
representing our bulk invariant as a Kasparov module is that we 
are then free to apply the full machinery of Kasparov theory to 
conduct further study on the invariants of interest. 
For example, if the algebra $A$ is a crossed product or groupoid 
algebra typically studied in the $C^*$-algebraic approach to  
condensed matter theory~\cite{Bel86}, then we immediately 
obtain a bulk-boundary correspondence for pairings of our bulk 
invariant with a `Dirac element' that extracts the  strong numerical phase 
of the system~\cite{BKR,BMes,BR1,KRSB02,KSB04,Kubota15b, PSBbook}.

\subsection{Bulk invariants for topological insulators}

For simplicity we will assume that $A$ is unital, which is roughly 
equivalent to working under a tight-binding approximation. 
We first  briefly review some physical terms.
\begin{defn}[Abstract insulators and symmetries]
We say a self-adjoint element $h \in A$ is an insulator if 
$h$ has a spectral gap. Taking a constant shift if necessary, we assume 
that an insulator $h$ is such that $0 \notin \sigma(h)$.

We say that an insulator $h$ has a chiral symmetry if $A$ is graded 
and $h$ is an odd element under this grading.

Let $A$ be a $C^*$-algebra with real structure $\rs_A$. 
\begin{enumerate}
  \item An insulator $h$ has a time-reversal symmetry (TRS) if 
  $h^{\rs_A} = h$. 
  \item An insulator $h$ has a particle-hole symmetry (PHS) if 
  $h^{\rs_A} = -h$. 
\end{enumerate}
\end{defn}

Because we take insulators $h$ to be self-adjoint invertible 
operators, the spectrally flattened operator $\hf:= h |h|^{-1}$ is a 
self-adjoint unitary. Therefore, if $h$ has a chiral symmetry, then 
$\hf$ gives an element in $V(A)$. Provided we have another 
odd self-adjoint unitary for comparison, we obtain an element 
in $DK(A)$. We call this van Daele element the 
{\em bulk invariant} of the topological phase.
If there is no chiral symmetry we take the tensor 
product $A\otimes \Cl_1$ and consider $\hf\otimes \rho$ 
instead, which is an OSU.

\subsubsection{With chiral symmetry}

We consider a chiral symmetry which is inner in 
the sense that the grading of $A$ is given by 
$\mathrm{Ad}_\Gamma$ for some $\Gamma=\Gamma^* \in A$.

\begin{example}[Chiral symmetry, no real structure]
If we do not make any reference to real structures on the 
graded algebra $A$, then taking the projection 
$\Pi_+=\frac12(1+\Gamma)$,  $A$ is isomorphic to $A_{++}\otimes \Cl_2$ 
with $A_{++} = \Pi_+ A \Pi_+$ 
a trivially graded algebra~\cite[Proposition 3.5]{Kellendonk15}.  
This isomorphism depends on a choice of OSU $e \in A$. 
Using this isomorphism, the operator of interest is 
$u_h = \Pi_+ e \hf \Pi_+$, which is unitary and  gives  a class $[u_h] \in K_1(A_{++})$. 
Hence we can apply our complex ungraded Cayley 
map (Theorem \ref{thm:complex_cayley}) to obtain the $KK^1$-class of the bulk invariant,
\begin{equation*}
\big[(\Cl_1, \, \ol{(u_h -1){A_{++}}}_{A_{++}} \otimes \C^2,\,  \Ci(u_h) \otimes \sigma_1 ) \big], \; 
u_h = \Pi_+ e \hf \Pi_+, \;  \Ci(u_h) = i(u_h+1)(u_h-1)^{-1} .  
\vspace{-0.3cm}
\end{equation*}
\hfill $\diamond$
\end{example}

The above expressions are for insulators with complex symmetries.
We now consider symmetries involving a real structure like TRS or PHS. 

\vspace{0.1cm}

\begin{example}[Real grading]
If we have an inner chiral symmetry with real
grading operator $\Gamma=\Gamma^{\rs_A}$ then $A_{++}^{\rs_A} = A_{++}$, $A^{\rs_A}\cong
A_{++}^{\rs_A}\otimes Cl_{1,1}$. 
If $(e \hf)^{\rs_A} = e\hf$, then $u_h^{\rs_A} = u_h$ is a unitary in $A_{++}^{\rs_A}$ and we 
are in the same situation as Example \ref{ex:triv_graded_Cayley}. 
Hence the Kasparov module of interest is
$$
  \left[ \big( \Cl_{1,0}, \, \ol{(u_h-1){A_{++}}}_{A_{++}} \otimes \C^2, \, (u_h+1)(u_h-1)^{-1} \otimes f \big) \right] \in KKR(\Cl_{1,0}, A_{++}) \cong KO_1(A_{++}^{\rs_A}).
$$
If $(e \hf)^{\rs_A} = -e\hf$, then $iu_h$ is unitary and $(iu_h)^{\rs_A} = iu_h \in A_{++}^{\rs_A}$. Our Cayley map then 
gives the Kasparov module
$$
   \big( \Cl_{1,0}, \, \ol{(u_h+i)A_{++}}_{A_{++}} \otimes \C^2, \, (u_h-i)(u_h+i)^{-1} \otimes f \big).  
 \vspace{-0.6cm}
$$
\hfill $\diamond$
\end{example}

\vspace{0.1cm}

\begin{example}[Imaginary grading]
If the grading operator $\Gamma$ is imaginary, $\Gamma^{\rs_A} = -\Gamma$, then for
$a_{++} \in A_{++}$, $a_{++}^{\rs_A} \in A_{--} = \Pi_- A \Pi_-$ with 
$\Pi_- = \frac{1}{2}(1-\Gamma)$. In this situation the real subalgebra $A^{\rs_A}\cong
A_{++}^{\mathrm{Ad}_e\circ{\rs_A}}\otimes Cl_{2,0}$~\cite[Theorem 3.10]{Kellendonk15}. 
If $(e\hf)^{\rs_A} = e\hf$, we check that 
$$
  (\Pi_+ e \hf \Pi_+)^{\mathrm{Ad}_e \circ {\rs_A}} = e \Pi_- e\hf \Pi_- e = \Pi_+ \hf e \Pi_+ = (\Pi_+ e \hf \Pi_+)^*.
$$
That is, $u_h^{\mathrm{Ad}_e \circ {\rs_A}} = u_h^*$ and we are in the case 
of Example \ref{ex:imag_cayley}. Therefore, for $\Ci(u_h) = i(u_h+1)(u_h-1)$,
$$
   \left( \Cl_{0,1}, \, \ol{(u_h-1)A_{++}}_{A_{++}} \otimes \C^2 , \, \begin{pmatrix} 0 & \Ci(u_h) \\ \Ci(u_h) & 0 \end{pmatrix} \right), 
   \quad  (v_1,v_2)^\mathfrak{r} = (v_1^{\mathrm{Ad}_e \circ {\rs_A}}, v_2^{\mathrm{Ad}_e \circ {\rs_A}})
$$
is the Real Kasparov module of interest and determines a class in 
$KKR(\Cl_{0,1}, A_{++}) \cong KO_{-1}(A_{++}^{\mathrm{Ad}_e\circ{\rs_A}})$.

Suppose now that $(e\hf)^{\rs_A} = -e\hf$. Then $u_h^{{\mathrm{Ad}_e\circ{\rs_A}}} = -u_h^*$ and we are in 
the setting of Example \ref{ex:KO3_to_KK}. Hence we can consider the ungraded unitary 
$$
  v_h = \begin{pmatrix} 0 & u_h \\ u_h & 0 \end{pmatrix} \in M_2(A),  \qquad 
  v_h^{\mathrm{Ad}_{-i\sigma_2} \circ \mathrm{Ad}_e \circ {\rs_A}} = v_h^*,
$$
so $v_h \in KO_{-1}(M_2(A_{++})^{\mathrm{Ad}_{-i \sigma_2}\circ \mathrm{Ad}_{e}\circ {\rs_A}}) 
  \cong KO_{-1}(A_{++}^{\mathrm{Ad}_{e}\circ {\rs_A}} \otimes \mathbb{H}) \cong KO_3(A_{++}^{\mathrm{Ad}_{e}\circ {\rs_A}})$. 
Applying the Cayley transformation, 
\[
  \left( \Cl_{0,1}, \, \ol{(v_h - 1_2)M_2(A)}_{A\otimes M_2(\C)} \hat\otimes \C^2, \, \Ci(v_h)\otimes \sigma_1 \right)
\]
is an unbounded $KKR$-cycle 
 with real structure $\mathrm{Ad}_{-i \sigma_2}\circ \mathrm{Ad}_{e}\circ {\rs_A}$ 
applied pointwise on the direct sum. The unbounded cycle represents an 
element in $KKO(Cl_{0,1}, A_{++}^{\mathrm{Ad}_e\circ{\rs_A}}\otimes Cl_{0,4})$.
\hfill $\diamond$
\end{example}

\subsubsection{Without chiral symmetry}

If there is no chiral symmetry, then the relevant algebra is $A\otimes \Cl_1$ (potentially with a real structure), 
where we have the odd self-adjoint unitary $\hf \otimes \rho$.

\begin{example}[No symmetry or TRS only]
We consider the two OSUs $\hf \otimes \rho$ and $1 \otimes \rho$ (where $1 \otimes \rho$ 
plays the role of a base point) and the element 
$[\hf\otimes \rho] - [1 \otimes \rho] \in DK(A \otimes \Cl_1)$ encodes the obstruction of  
a homotopy of $\hf$ to a trivial Hamiltonian. 
We are in the setting of Example \ref{ex:K_0_cayley}, where there is a map 
$$
  [\hf\otimes \rho] - [1 \otimes \rho] \mapsto \big[ ( \C, \, \frac{1}{2}(\hf -1 )A_A, \, 0) \big] \hat\otimes_{\C} 1_{KK(\Cl_1,\Cl_1)}.
$$
More simply still, we recover the class of the Fermi projection 
$[\frac{1}{2}(\hf -1)] = [\chi_{(-\infty,0]}(h)] \in K_0(A)$.

If in addition $A$ has a real structure with $h^{\rs_A} = h$, then the same argument applies and 
we get the $KKR$-class $\big[( \C, \frac{1}{2}(\hf -1) A_A, 0 ) \big]$ or the class of the 
projection $[\frac{1}{2}(\hf -1)] \in KO_0(A^{\rs_A})$. Note that in many cases of interest, 
$A^{\rs_A} \cong A^{\tilde{\rs}_A} \otimes \mathbb{H}$ for some other real structure $\tilde{\rs}_A$. 
In such a situation, by the K\"{u}nneth formula 
$[\frac{1}{2}(\hf -1)] \in KO_0(A^{\tilde{\rs}_A} \otimes \mathbb{H}) \cong KO_4(A^{\tilde{\rs}_A})$.
\hfill $\diamond$
\end{example}


\vspace{0.1cm}

\begin{example}[Particle-hole symmetric Hamiltonians] 
Let $A$ be a trivially graded and complex $C^*$-algebra and suppose 
there is a real structure $\mathfrak{r}_A$ on $A$ with 
 $\hf^{\mathfrak{r}_A} = -\hf$ for some insulator $h \in A$. 
That is, $h$ has a particle-hole symmetry. 

The algbera $A$ is ungraded so we consider the element $\hf\otimes (1,-1) \in \Cl_1$, where we use the 
real structure on $\Cl_1 \cong \C\oplus \C$ given by $(\alpha,\beta)^{\mathfrak{r}_{0,1}} = (\ol{\beta},\ol{\alpha})$. 
One then checks that $\hf\otimes (1,-1)$ is self-adjoint, square one and 
$$
   (\hf \otimes (1,-1))^{ \mathfrak{r}_A\otimes \mathfrak{r}_{0,1}}  = \hf^{\mathfrak{r}_A} \otimes (1,-1)^{\mathfrak{r}_{0,1}} = -\hf \otimes (-1,1) = \hf \otimes (1,-1).
$$

Let us consider the bulk phase of the odd self-adjoint unitary $\hf \otimes (1,-1)$ 
relative to a fixed base point. Namely, suppose that $A$ has a 
real skew-adjoint unitary $J$, $J^*=-J$, $J^2=-1$ and $J^{{\rs_A}} = J$. Then 
$iJ\otimes (1,-1) \in A \otimes \Cl_1$ is an odd self-adjoint unitary 
invariant under $\mathfrak{r}_A\otimes \mathfrak{r}_{0,1}$. 
We therefore obtain a class 
$$
   [\hf \otimes (1,-1)] - [iJ \otimes (1,-1)] \in DK(A \otimes \Cl_1, {\rs_A} \otimes \rs_{0,1}).
$$
Applying our Cayley map, we note that $\Ci_{iJ \otimes (1,-1)}(\hf) = iJ(\hf +iJ)(\hf -iJ)^{-1} \otimes (1,-1)$ 
and so our Kasparov module is 
$$
  \left( \Cl_{1,0}, \, \ol{(\hf - iJ)\otimes (1,-1)(A\otimes \Cl_1)}_{A\otimes \Cl_{0,1}}, \,  iJ(\hf +iJ)(\hf -iJ)^{-1} \otimes (1,-1) \right), 
$$
where the left Clifford action is multiplication by $iJ \otimes (1,-1)$. The class of this 
Kasparov module gives an element in $KKR(\Cl_{1,0}, A\otimes \Cl_{0,1}) \cong KO_2(A^{{\rs_A}})$. 
\hfill $\diamond$
\end{example}

\subsection{Boundary invariants for topological insulators}

We now consider the boundary map of the bulk invariant  from a (Real, graded) short exact sequence 
with positive linear splitting 
$$
  0 \to I \to E \to A \to 0
$$
and the corresponding image in $DK$-theory and $KK$-theory. Our work complements 
recent descriptions of the boundary $K$-theory class of topological phases via the Cayley 
transform by Schulz-Baldes and Toniolo~\cite{SBToniolo}. Similarly, Alldridge, Max and Zirnbauer 
use the van Daele boundary map and Roe's isomorphism $DK(I\hat\otimes \Cl_{1,0}) \to KKR(\C,I)$ 
to write down a bounded representative of the boundary invariant~\cite{AlldridgeMax}. 
Our work has different motivations and constructions to~\cite{AlldridgeMax} though there are clear similarities.

Let $A$ be unital and $h\in A$ an insulator with spectral gap $\Delta$ at $0$; we may suppose that $\Delta = (-\delta,\delta)$. 
We set $t_\Delta = \frac{2\pi}{|\Delta|} = \frac{\pi}{\delta}$, a characteristic time.
Let $\tilde{h}$ be a lift of $h$ in $E$. 
Then 
\begin{equation} \label{eq:h_lift}
 \tilde{a} = \frac{\tilde{h}}{\delta} P_\Delta(\tilde{h}) + P_{\geq\delta}(\tilde{h}) - P_{\leq -\delta}(\tilde{h})
\end{equation}
is a lift of the spectrally flattened $\hf$.  

As we will show, our explicit lift $\tilde{a}$ combined with our general results about 
the boundary map from Section \ref{subsec:DK_bdry} will allow 
us to write down the boundary invariants of topological insulators explicitly in terms of the  
lift $\tilde{h}$.

\subsubsection{Without chiral symmetry}
If $A$ is trivially graded then $x=\hf\otimes e$ is an OSU of $A\otimes \Cl_1$ 
where $e$ is the square one generator of $\C l_1$. It follows that $\tilde{ x} = \tilde{ a}\otimes e$ is 
a lift of $x$. We recall Lemma \ref{lem:vD_bdry_map}, which gives the element 
$\delta([\hf \otimes e]) \in DK(I\otimes \Cl_1 \hat\otimes \Cl_1)$. Using the identification 
$I\otimes\Cl_1\hot\Cl_1 \cong I \otimes \Cl_2$, the class $\delta([\hf \otimes e])$ is represented by the element 
$[Y] - [1\otimes \rho]$ with $\rho$ a generator of $\Cl_2$ and 
\begin{align*}
  Y &=  \exp(\pi \tilde{a} \otimes e \rho) (1\otimes \rho) \\
   &= - \exp\big(-i\pi (\tilde{a} \otimes \Gamma)\big)(1\otimes \rho) \\
   &= \big(P_\Delta(\tilde{h})^\perp \otimes 1 - (P_\Delta(\tilde{h})\otimes 1) 
   \exp(-it_\Delta (\tilde{h} \otimes  \Gamma))\big) (1\otimes\rho),
\end{align*}
where $\Gamma = i e\rho $ is the grading operator on $I\otimes \Cl_2$. 

\begin{example}[Boundary $KK$-class, No symmetries]
Let us consider the Kasparov module representing the boundary invariant without reference to a real structure. By 
Proposition \ref{prop:vDboundary_to_KK}, the boundary class $\delta([x]) \in DK(I \otimes \Cl_2)$ is represented 
by the unbounded Kasparov module
$$
   \left( \C, \, \ol{\cos(\tfrac{\pi}{2} \tilde{a}\otimes e) (I \otimes \Cl_1)}_{I\otimes \Cl_1}, \, \tan( \tfrac{\pi}{2} \tilde{a} \otimes e) \right).
$$
We note that 
because $\hf \in A$ and is not a matrix, we do not have to take a direct sum of the 
module $I_I$.
We can simplify this Kasparov module by noting that 
$\ol{\cos(\frac{\pi}{2} \tilde{a}\otimes e) (I \otimes \Cl_1)} \cong  \ol{\cos(\frac{\pi}{2} \tilde{a}) I} \otimes \Cl_1$ 
and $\tan( \frac{\pi}{2} \tilde{a} \otimes e) = \tan( \frac{\pi}{2} \tilde{a}) \otimes e$. 
Recalling our definition of $\tilde{a}$, Equation \eqref{eq:h_lift}, and 
writing $P_\Delta := P_\Delta(\tilde{h})$, we 
further reduce our boundary Kasparov module to 
$$
  \left( \C, \, \ol{ \cos( \tfrac{1}{2} t_\Delta \tilde{h}) P_\Delta I_I} \otimes \Cl_{1}, \, \tan(\tfrac{1}{2}t_\Delta \tilde{h}) \otimes e \right),
$$
where we denote by $\Cl_{1}$ the $C^*$-module ${\Cl_{1}}_{\Cl_1}$.

If we consider bounded representatives of the $KK$-class, then by Proposition \ref{prop:bdd_bdry_Kasmod}, 
the boundary invariant is represented by the Kasparov module 
$$
   \left[ \big( \C, \, I_I \otimes \Cl_1, \, \tilde{a} \otimes e \big) \right] = 
   \left[ \big( \C, P_\Delta  I_I \otimes \Cl_1, \, \tilde{h} \otimes e \big) \right] \in KK(\C, I \otimes \Cl_1).
$$
In many cases of interest, the lift $\tilde{h}$ is the restriction of $h$ to a system with boundary and 
$P_\Delta$ the projection onto edge spectrum. Hence 
our boundary Kasparov module closely lines up with the physical intuition of a boundary topological invariant.
\hfill $\diamond$
\end{example}

\vspace{0.1cm}

\begin{example}[Boundary $KK$-class, TRS and PHS Hamiltonians]
If $h$ has a TRS, $h^{\rs_A} = h$, then $\tilde{a}$ is real and self-adjoint, so $e$ must be real and self-adjoint.
Hence $Y\in I^{\rs_I}\otimes Cl_{2,0}$ and we have the real Kasparov module
$$
  \left( \C, \, \ol{ \cos( \tfrac{1}{2} t_\Delta \tilde{h}) P_\Delta  I_I} \otimes \Cl_{1,0}, \, \tan(\tfrac{1}{2}t_\Delta \tilde{h}) \otimes e \right)
$$
which gives a class in $KKR(\C, I \otimes  \Cl_{1,0}) \cong KO_{-1}(I^{\rs_I})$, where the last isomorphism 
is given by considering the ungraded Cayley transform $\Cd \big(\tan(\frac{1}{2}t_\Delta \tilde{h})\big)$. 
The bounded representative of this $KKR$-class is
$\big( \C, \, P_\Delta I_I \otimes \Cl_{1,0}, \, \tilde{h} \otimes e \big)$.

 If $h$ has a PHS, $h^{\rs_A} = -h$, then $\tilde{h}$ is imaginary and self-adjoint. Hence $e$ must 
  be imaginary and self-adjoint for $Y$ to be real and self-adjoint. 
Therefore  $Y\in I^{\rs_I}\otimes Cl_{1,1}$ and our boundary invariant is represented by the Kasparov module 
$$
  \left( \R, \, \ol{ \cos( \tfrac{1}{2} t_\Delta \tilde{h}) P_\Delta    I^{\rs_I}_{I^{\rs_I}}} \otimes Cl_{0,1}, \, \tan(\tfrac{1}{2}t_\Delta \tilde{h}) \otimes (1,-1) \right)
$$  
and corresponding class in $KKO(\R, I^{\rs_I} \otimes Cl_{0,1}) \cong KO_{1}(I^{\rs_I})$. The bounded 
representative is given by the Kasparov module 
$\big( \R, \, P_\Delta  I^{\rs_I}_{I^{\rs_I}} \otimes Cl_{0,1}, \, \tilde{h} \otimes (1,-1) \big)$.
\hfill $\diamond$
\end{example}

\subsubsection{With chiral symmetry}

We first note a general result on graded algebras that will be of use to us. 
Suppose $B$ is $\Z_2$-graded and 
the grading is implemented by a self-adjoint unitary $\Gamma \in \mathrm{Mult}(B)$. 
Then the map 
\begin{equation} \label{eq:pull_out_clifford}
\eta(b \hat\otimes \rho^k):= b\Gamma^k\otimes \rho^{k+|b|}
\end{equation}
defines a graded isomorphism
between  the graded tensor product
$B\hot\Cl_1$ with grading $\mathrm{Ad}_\Gamma$ on $B$ and the ungraded tensor product 
$B\otimes\Cl_1$ with trivial grading on $B$.

If an insulator $h\in A$ has a chiral symmetry, then the lift $\tilde{a}$ is an odd self-adjoint 
lift of $\hf$. Therefore, by Lemma \ref{lem:vD_bdry_map}, the class of $[\hf]$ under the 
boundary map in van Daele $K$-theory is represented by 
$$
  Y = - \exp(\pi \tilde{a} \hat\otimes \kg)(1\hat\otimes\kg) = 
  \left( P_\Delta(\tilde{h})^\perp - P_\Delta(\tilde{h}) \exp(t_\Delta \tilde{h}\hat\otimes \rho) \right) (1\hat\otimes \rho).
$$

\begin{example}[Chiral symmetry only]
We can again apply Proposition \ref{prop:vDboundary_to_KK}
and obtain a representative of $\delta([\hf])$ in $KK(\C,I)$ as the class of the Kasparov module
$$
  \left( \C, \, \ol{\cos(\tfrac{\pi}{2} \tilde{a}) I_I}, \, \tan(\tfrac{\pi}{2}\tilde{a}) \right).
$$
Because we have used the specific lift $\tilde{a}$, we write $P_\Delta = P_\Delta(\tilde{h})$ and 
simplify this Kasparov 
module to 
$$
   \left( \C, \, \ol{ \cos( \tfrac{1}{2} t_\Delta \tilde{h}) P_\Delta I_I }, \, \tan(\tfrac{1}{2} t_\Delta \tilde{h}) \right).
$$
Taking the bounded transform, we use Propositon \ref{prop:bdd_bdry_Kasmod} 
and obtain the boundary invariant
\begin{equation} \label{eq:chiral_bdry}
  \big[ \big( \C, \, I_I, \, \tilde{a} \big) \big] = \big[ \big( \C, \, P_\Delta  I_I, \, \tilde{h} \big) \big] 
  \in KK(\C, I).
\end{equation}

Suppose now that $I$ is inner-graded, e.g. the grading is implemented by an inner chiral symmetry 
on the boundary. Then using the isomorphism from Equation \eqref{eq:pull_out_clifford}, 
we know that $\delta([\hf]) \in DK(I \hat\otimes \Cl_1) \cong DK(I \otimes \Cl_1) \cong K_0(I)$. 
Equation \eqref{eq:chiral_bdry} gives 
a representative of this class, but not a canonical one as the Kasparov module in 
\eqref{eq:chiral_bdry} uses the grading on $I$.
\hfill $\diamond$
\end{example}

\vspace{0.1cm}

\begin{example}[TRS with chiral symmetry]
Let us now consider the boundary map of chiral symmetric Hamiltonians with a real structure. 
If $h$ has a TRS, $h^{\rs_A} = h$, then $\tilde{h}$ is real and 
$Y \in I^{\rs_I} \otimes Cl_{1,0}$. Our Kasparov module of interest is 
$$
 \left( \C, \, \ol{ \cos( \tfrac{1}{2} t_\Delta \tilde{h}) P_\Delta   I_I }, \, \tan(\tfrac{1}{2} t_\Delta \tilde{h}) \right)
$$
Taking the bounded transform, the boundary invariant is also represented by the Kasparov module
$$
   \big[\big( \C, \, P_\Delta I_I , \, \tilde{h} \big) \big] \in KKR(\C, I).
$$

If $I$ is inner-graded by the element $\Gamma \in \mathrm{Mult}(I)$ and is such that 
$\Gamma^\rs = \Gamma$, then the map $\eta$ from Equation \eqref{eq:pull_out_clifford} 
gives an isomorphism $I^{\rs_I} \hat\otimes Cl_{1,0}$ to $I^{\rs_I} \otimes Cl_{1,0}$, where 
$I^{\rs_I}$ has trivial grading on the right-hand side. Hence our boundary invariant can also 
be regarded as an element in $KO_0(I^{\rs_I})$. 
\hfill $\diamond$
\end{example}

\vspace{0.1cm}

\begin{example}[PHS with chiral symmetry]
Suppose that $h$ has a PHS so $\tilde{h}$ and the lift $\tilde{a}\in E$ are imaginary. 
In order to apply the van Daele boundary map, we first need to construct a real 
OSU.  
We consider the algebra $A \hat\otimes \Cl_{0,2}$, where one can check 
that $\hf \, \hat\otimes \, i f_1 f_2$ is an odd Real self-adjoint unitary.
Therefore, for $e$ the self-adjoint odd generator in $\Cl_{1,2}$ and $\omega = e_1 f_1 f_2 \in \Cl_{1,2}$ 
the orientation element, Lemma \ref{lem:vD_bdry_map} 
gives that 
$$
  Y = - \exp \big( \pi \tilde{a} \,\hat\otimes\, i\omega \big)(1\hat\otimes e) = 
  \left( P_\Delta(\tilde{h})^\perp - P_\Delta(\tilde{h}) \exp(t_\Delta \tilde{h}\, \hat\otimes \, i\omega) \right) (1\hat\otimes e)
$$
represents the class $\delta([\hf \, \hat\otimes \, i f_1 f_2]) \in DK(I \hat\otimes \Cl_{1,2})$. 

We can now apply Proposition \ref{prop:vDboundary_to_KK} to obtain the unbounded Kasparov module 
$$
  \Big( \C, \, \ol{\cos( \tfrac{\pi}{2} \tilde{a}\, \hat\otimes \, i f_1 f_2 )I \hat\otimes \Cl_{0,2}}_{I \hox \Cl_{0,2}}, \, 
   \tan(\tfrac{\pi}{2} \tilde{a}\, \hat\otimes \, i f_1 f_2 ) \Big)
$$
representing the boundary. We can 
simplify this Kasparov module to 
$$
  \Big(\C, \, \ol{\cos(\tfrac{\pi}{2} \tilde{a})I_I} \, \hat\otimes \, \Cl_{0,2}, \, \tan(\tfrac{\pi}{2} \tilde{a}) \, \hat\otimes \, if_1f_2 \Big)
$$
and using the explicit lift $\tilde{a}$ from Equation \eqref{eq:h_lift}, the boundary Kasparov module becomes
$$
  \Big( \C, \, \ol{ \cos( \tfrac{1}{2}t_\Delta \tilde{h}) P_\Delta I_I} \,\hat\otimes\, \Cl_{0,2}, \, 
    \tan(\tfrac{1}{2}t_\Delta \tilde{h}) \,\hat\otimes\, if_1f_2 \Big).
$$
We also take the bounded transform, where by a straight-line homotopy, the boundary class 
is represented by the Kasparov module
$$
  \left[ \big(  \C, \, P_\Delta I_I \,\hat\otimes\, \Cl_{0,2}, \, \tilde{h} \,\hat\otimes\, if_1f_2 \big) \right] \in KKR( \C, I \hat\otimes \Cl_{0,2}).
$$
If the grading on $I$ is inner and the grading operator imaginary, $\Gamma^\rs = -\Gamma$, 
then  the isomorphism $\eta$ from 
 Equation \eqref{eq:pull_out_clifford} is such that $I^{\rs_I} \hat\otimes Cl_{0,2} \cong I^{\rs_I} \otimes Cl_{0,2}$ 
 with $I^{\rs_I}$ trivially graded on the right-hand side. 
 Therefore, for inner and imaginary chiral symmetries, the boundary invariant gives a class 
 in $KO_{2}(I^{\rs_I})$.
 \hfill $\diamond$
\end{example}


\appendix
\section{Kasparov products with the Cayley transform}

In this appendix we outline how our Cayley map on $K$-theory 
is compatible with  the constructive form 
of the Kasparov product. 
We will address the complex case. The
real case can be adapted from the complex one,
for while the algebraic details change, the analytic details are
the same.

We will typically be interested in products of odd (ungraded) Kasparov modules 
$(\A, X_B, D)$, but note that our results also hold 
when $A=\overline{\A}$ and $B$ are $\Z_2$-graded. 
In the graded case, the triple $(\A, X_B, D)$ should be 
interpreted as the Kasparov module 
$(\A \hat\otimes \Cl_1, X_B^{\oplus 2}, \, D \hat\otimes \sigma_1)$, where 
$D: \Dom(D) \to X_B$ is even and the left Clifford action is generated by $-i\sigma_2$.

Given an unbounded Kasparov module $(\A,X_B,D)$
and an even unitary $u\in \A\subset A$ we 
are interested in representatives of the
product
$$
[(\C,\overline{(u-1)A_A},\Ci(u))]\ox_A[(\A,X_B,D)] \in KK(\C, B), \qquad \Ci(u) = i(u+1)(u-1)^{-1}.
$$
To construct a representative, we need to work on the module
$\overline{(u-1)X_B} \cong \overline{(u-1)A}\ox_AX_B$ (or rather two copies
of this module) and consider the  operators
$$
\Ci(u)\pm i\tilde{D}:\,(u-1)\Dom(D)\subset \overline{(u-1)X_B}
\to \overline{(u-1)X_B},
$$
where we need to make sense of the restriction $\tilde{D}=D|_{(u-1)\Dom(D)}$
in spite of the possible lack of complementability of $\ol{(u-1)X_B}$ in $X_B$.

\begin{lemma}
\label{lem:self-adj}
Let $(\A,X_B,D)$ be an unbounded Kasparov module 
and  $u\in\A^\sim$  unitary. Suppose that there exists an approximate unit 
$(v_n)\subset C^*((u-1),(u^*-1))$ such that for all $n$ the commutator $[D,v_n]$
is defined and bounded, $v_nX_B\subset (u-1)X_B=(u^*-1)X_B$,
and finally $[D,v_n](u^*-1)\to 0$ $*$-strongly. 

Then with $\tilde{\D}=s$-$\lim v_n\D|_{(u-1)X}$
we find that $(\Ci(u)\pm i\tilde{D})^*=\Ci(u) \mp i\tilde{D}$.
\end{lemma}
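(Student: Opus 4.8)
The plan is to prove the two inclusions $(\Ci(u)\pm i\tilde D)^*\supseteq\Ci(u)\mp i\tilde D$ and $(\Ci(u)\pm i\tilde D)^*\subseteq\Ci(u)\mp i\tilde D$ separately; the first is a routine symmetry argument and the second is where the hypotheses on the net $(v_n)$ are used to compensate for the fact that $\overline{(u-1)X_B}$ need not be a complemented submodule of $X_B$. First I would record the preliminaries: since $u$ is unitary, hence normal, $C^*((u-1),(u^*-1))$ is commutative, so each $v_n$ commutes with $u$ and $u^*$, and $Y:=\overline{(u-1)X_B}=\overline{(u^*-1)X_B}$ is invariant under both; by Proposition~\ref{prop:Cayley_properties} the operator $\Ci(u)$ is self-adjoint and regular on $Y$ with $(u-1)\Dom(D)\subseteq\Dom(\Ci(u))$. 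Using $v_n\to 1$ strongly on $Y$ together with the self-adjointness of the $v_n$ and of $D$, one checks that for $\psi_1,\psi_2\in\Dom(D)$ one has $\langle\tilde D(u-1)\psi_1\mid(u-1)\psi_2\rangle=\lim_n\langle v_nD(u-1)\psi_1\mid(u-1)\psi_2\rangle=\langle D(u-1)\psi_1\mid(u-1)\psi_2\rangle$, so $\tilde D$ is symmetric on the dense domain $(u-1)\Dom(D)$ and hence closable; from now on $\tilde D$ denotes its closure, and $\Ci(u)\pm i\tilde D$ is defined on $\Dom(\Ci(u))\cap\Dom(\tilde D)\supseteq(u-1)\Dom(D)$.

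The inclusion $(\Ci(u)\pm i\tilde D)^*\supseteq\Ci(u)\mp i\tilde D$ is then immediate from self-adjointness of $\Ci(u)$ and symmetry of $\tilde D$: for $\xi,\eta\in\Dom(\Ci(u))\cap\Dom(\tilde D)$ one has $\langle(\Ci(u)+i\tilde D)\xi\mid\eta\rangle=\langle\xi\mid\Ci(u)\eta\rangle+\langle\xi\mid-i\tilde D\eta\rangle=\langle\xi\mid(\Ci(u)-i\tilde D)\eta\rangle$, and symmetrically with the signs interchanged.

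For the reverse inclusion I would take $\zeta\in\Dom((\Ci(u)+i\tilde D)^*)$ and set $\eta=(\Ci(u)+i\tilde D)^*\zeta$, so that $\langle\Ci(u)(u-1)\psi\mid\zeta\rangle+i\lim_n\langle v_nD(u-1)\psi\mid\zeta\rangle=\langle(u-1)\psi\mid\eta\rangle$ for all $\psi\in\Dom(D)$. The key manoeuvre is to test against the net $v_n\zeta$, which \emph{does} lie in $\Dom(\Ci(u))=(u-1)X_B$ and satisfies $v_n\zeta\to\zeta$: after rewriting $\langle v_nD(u-1)\psi\mid\zeta\rangle=\langle D(u-1)\psi\mid v_n\zeta\rangle$ and expanding $D(u-1)\psi=(u-1)D\psi+[D,u]\psi$, one moves $D$ back onto $v_n\zeta$ using $v_n(u^*-1)=(u^*-1)v_n$ and controls the commutator remainders with the hypothesis that $[D,v_n](u^*-1)\to 0$ $*$-strongly. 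This should yield that $\Ci(u)v_n\zeta$ converges and that $\psi\mapsto\langle D(u-1)\psi\mid v_n\zeta\rangle$ is bounded uniformly in $n$; since $\Ci(u)$ is closed and $D$ is self-adjoint, it follows that $\zeta\in\Dom(\Ci(u))$, that $v_n\zeta\to\zeta$ in the graph norm of $\Ci(u)$, and that $\zeta\in\Dom(\tilde D)$ with $\eta=\Ci(u)\zeta-i\tilde D\zeta$; the case of $\Ci(u)-i\tilde D$ is identical with the sign of $i\tilde D$ reversed. If preferred, this step can be organised through Lance's criterion for regular self-adjoint operators, exactly as in the proof of Lemma~\ref{lem-Cd2}.

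The main obstacle is this last step. Because $Y=\overline{(u-1)X_B}$ is in general not complemented, $\tilde D$ is genuinely not the restriction of $D$ to a submodule, so one cannot simply invoke self-adjointness of $D$; the three conditions on $(v_n)$ --- being an approximate unit in $C^*((u-1),(u^*-1))$, having bounded commutators with $D$, and satisfying $[D,v_n](u^*-1)\to 0$ \emph{$*$-strongly} --- are precisely what is needed to push an element of $\Dom((\Ci(u)+i\tilde D)^*)$ back into $\Dom(\Ci(u))\cap\Dom(\tilde D)$, and the passage from strong to $*$-strong convergence is the delicate point, since both a term and its adjoint must be controlled when iterating the cut-offs.
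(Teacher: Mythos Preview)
Your two-inclusion strategy and your diagnosis of where the difficulty lies match the paper. The gap is in the mechanism you propose for the hard inclusion. You want to ``move $D$ back onto $v_n\zeta$'', but that requires $(u^*-1)v_n\zeta\in\Dom(D)$, which is not known a priori; and the adjoint relation only tells you that the \emph{limit} $\lim_n\langle D(u-1)\psi\mid v_n\zeta\rangle=\langle\tilde D(u-1)\psi\mid\zeta\rangle$ is bounded in $\psi$, not that each term is. Your sketch thus presupposes something essentially equivalent to what is being proved.

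The paper separates two distinct uses of $(v_n)$ that your outline conflates. In the first, the $v_n$ appear only on the $\xi$-side, to unpack $\tilde D(u-1)\xi=\lim_n v_n D(u-1)\xi$ inside the pairing with $y$; after passing to the limit (using $v_ny\to y$ in $Y$) one obtains
\[
(D\xi\mid -i(u^*-1)y)_B=(\xi\mid(u^*-1)z)_B+(\xi\mid i(u^*+1)y)_B-(\xi\mid i[D,u^*]y)_B
\]
for all $\xi\in\Dom(D)$, and self-adjointness of $D$ on the \emph{ambient} module $X_B$ gives $(u^*-1)y\in\Dom(D)$. The correct target is $(u^*-1)y$, not $v_ny$. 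Only in the second step, with $(u^*-1)y\in\Dom(D)$ already in hand, does the paper bring $(v_n)$ to bear on the $y$-side: expanding $v_n\big(-iD(u^*-1)y+i[D,u^*]y\big)$ and using $[D,v_n](u^*-1)\to 0$ $*$-strongly isolates a term $-i(u^*-1)Dv_ny$, from which $y$ is seen to lie in the graph-norm closure of $(u-1)\Dom(D)$, hence in $\Dom(D)$. Membership of $y$ in $\Dom(\Ci(u))$ then falls out algebraically from $(u^*+1)y\in(u^*-1)\Dom(D)$. Your suggestion to organise this via Lance's criterion as in Lemma~\ref{lem-Cd2} does not apply: that argument exhibits a concrete bounded $F$ with $F(1-F^2)^{-1/2}$ equal to the operator in question, and no such $F$ is available for $\Ci(u)\pm i\tilde D$.
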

\begin{proof}
We prove the lemma for $\Ci(u)+i\tilde{D}$ 
since the other case is proved 
identically.
Let $y\in\Dom((\Ci(u)+i\tilde{D})^*)$. That is, there exists $z\in \overline{(u-1)X_B}$
such that for all elements $x\in \Dom(\Ci(u)+i\tilde{D})=(u-1)\Dom(D)$ we have
$$
((\Ci(u)+i\tilde{D})x\mid y)_B=(x\mid z)_B.
$$
Write $x=(u-1)\xi$ with $\xi\in\Dom(D)$. We observe that 
$v_n[\D,u]=[\D,v_n(u-1)]-[\D,v_n](u-1)\to [\D,u]$ strongly. Then 
\begin{align*}
((\Ci(u)+i\tilde{D})x\mid y)_B
&=((\Ci(u)+i\tilde{D})(u-1)\xi\mid y)_B\\
&=(i(u+1)\xi\mid y)_B+(i\lim v_n[D,u]\xi\mid y)_B+(\lim v_n(u-1)iD\xi\mid y)_B\\
&=(i(u+1)\xi\mid y)_B+(i[D,u]\xi\mid y)_B+((u-1)iD\xi\mid y)_B\\
&=(\xi\mid -i(1+u^*)y)_B
+(\xi\mid i[D,u^*]y)_B+(D\xi\mid -i(u^*-1)y)_B\\
&=(\xi\mid (u^*-1)z)_B.
\end{align*}
Rearranging the last equality shows that 
\begin{equation}
(D\xi\mid -i(u^*-1)y)_B
=(\xi\mid (u^*-1)z)_B + (\xi\mid i(u^*+1)y)_B - (\xi\mid i[D,u^*]y)_B
\label{eq:in-dom}
\end{equation}
and as this holds for all $\xi\in\Dom(D)$, we see that
$(u^*-1)y\in \Dom(D^*)=\Dom(D)$. Moreover, 
since $z=(\Ci(u)+i\tilde{D})^*y$ 
we learn that
$$
-iD(u^*-1)y=(u^*-1)(\Ci(u)+i\tilde{D})^*y + i(u^*+1)y -i[D,u^*]y
$$
or
\begin{equation}
(u^*-1)(\Ci(u)+i\tilde{D})^*y = -iD(u^*-1)y + i[D,u^*]y -i(u^*+1)y.
\label{eq:u-1-u+1}
\end{equation}
If $y\in\Dom(D)$ then 
$$
-iD(u^*-1)y+i[D,u^*]y=-i(u^*-1)D y
$$
and in this case we see from 
Equation \eqref{eq:u-1-u+1} that $(u^*+1)y$
is in  $(u^*-1)\Dom(D)\subset(u^*-1)X_B=\Dom(\Ci(u))$. 
Thus we can multiply through by
$(u^*-1)^{-1}$ and find that
$$
(\Ci(u)+i\tilde{D})^*y=-iD y+\Ci(u)y.
$$
Hence 
$$
\Dom((\Ci(u)+i\tilde{D})^*)\cap\Dom(D)=\Dom(\Ci(u)-i\tilde{D}),
$$
and we need only show that $y\in\Dom(D)$.
So let $(v_n)\subset C^*((u-1),(u^*-1))$ 
be as in the statement of the Lemma.
Then we find that
\begin{align*}
-iD(u^*-1)y+&i[D,u^*]y=\lim_n v_n(-iD(u^*-1)y+i[D,u^*]y)\\
&=\lim_n\Big(-i[v_n,D](u^*-1)y-iD(u^*-1)v_ny+i[v_n,[D,u^*]]y+i[D,u^*]v_ny\Big)\\
&=\lim_n\Big(-i[v_n,D](u^*-1)y-i(u^*-1)D v_ny+i[v_n,[D,u^*]]y\Big)\\
&=\lim_n\Big(-i[v_n,D](u^*-1)y-i(u^*-1)D v_ny\Big)
\end{align*}
where the last equality follows since $[v_n,[D,u^*]]\to 0$ strongly.
Now $v_ny\in (1-u)X$ and $v_ny\in \Dom(D)$ by Equation \eqref{eq:in-dom}.
Thus if $[v_n,D](u^*-1)\to 0$ strongly we deduce 
that $y$ is in the closure
of $(u-1)\Dom(D)$ in the graph norm of $D$, 
and so in $\Dom(D)$.
\end{proof}

\begin{thm}
\label{thm:product}
Let $(\A,X_B,D)$ be an odd unbounded Kasparov module and 
$u \in \calA^\sim$ an even unitary. 
Suppose $C^*((u-1),(u^*-1))$ has an 
approximate unit $(v_n)$ as in Lemma \ref{lem:self-adj} 
and $\| [D, u] \| < 2$. 
Then 
$$ 
  \Big( \C, \, \overline{(u-1)X_B}\oplus \overline{(u-1)X_B}, \, \Ci(u)\hat{+}\tilde{D} \Big), \quad 
    \Ci(u)\hat{+}\tilde{D}:=\begin{pmatrix} 0 & \Ci(u) -i\tilde{D}\\ \Ci(u)+i\tilde{D} & 0\end{pmatrix}
$$
is an unbounded Kasparov module  
 representing the 
Kasparov product  of the class of the Cayley transform 
$\big[(\C,\overline{(u-1)A_A},\Ci(u))\big] \in KK_1(\C, A)$ with 
$\big[(\A,X_B,D)\big] \in KK^1(A, B)$.
\end{thm}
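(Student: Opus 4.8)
The plan is first to check that $\big(\C,\overline{(u-1)X_B}\oplus\overline{(u-1)X_B},\Ci(u)\hat{+}\tilde{D}\big)$ is a genuine unbounded Kasparov $(\C,B)$-module, and then to identify it with the internal Kasparov product via Kucerovsky's recognition criterion \cite{Kucerovsky}, applied to the factorisation $\overline{(u-1)X_B}\cong\overline{(u-1)A}_A\,\hat{\otimes}_A\,X_B$. Oddness of $\Ci(u)\hat{+}\tilde{D}$ is visible from its off-diagonal form and the left $\C$-action is trivial; self-adjointness is exactly Lemma~\ref{lem:self-adj}, whose hypotheses are supplied by the approximate unit $(v_n)$. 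For regularity and local compactness of the resolvent one uses the graded analogue $1+\Ci(u)^2=4\big((u-1)(u^*-1)\big)^{-1}$ of the identity in the proof of Proposition~\ref{prop:Cayley_properties}, together with the second resolvent identity, to show that $\big(1+(\Ci(u)\hat{+}\tilde{D})^2\big)^{-1}$ is a compact endomorphism, the point being that $\pi(a)(1+D^2)^{-1}\in\End^0_B(X)$ for every $a\in A$; regularity then follows via the bounded transform as in \cite[Theorem~10.4]{Lance}.

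\textbf{Identifying the product.} One verifies the three hypotheses of Kucerovsky's theorem with $\D_1=\Ci(u)$ on $\overline{(u-1)A}_A$, $\D_2=D$ on $X_B$ (in the $\Cl_1$-picture, $D\hat{\otimes}\sigma_1$) and candidate $\D=\Ci(u)\hat{+}\tilde{D}$. The domain condition is immediate, since $\Dom(\Ci(u)\hat{+}\tilde{D})=\big((u-1)\Dom(D)\big)^{\oplus 2}\subset(u-1)X_B^{\oplus 2}=\Dom\big(\Ci(u)\otimes 1\big)$. For the connection condition it suffices to test the creation operators $\eta\mapsto\xi\eta$ on the dense set of vectors $\xi=(u-1)a$ with $a\in\calA$: such $\xi$ carry $\Dom(D)$ into $(u-1)\Dom(D)$, and the graded commutator of the associated $2\times2$ operator with $\D_2\oplus\D$ is bounded because $\Ci(u)(u-1)=i(u+1)$ is bounded and $[D,u]$, $[D,a]$ are bounded. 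For the positivity condition a direct computation shows that the graded anticommutator $\{\Ci(u)\otimes 1,\,\D\}$ is, on each summand, $2\,\Ci(u)^2$ plus a bounded self-adjoint correction $\pm\, i[\Ci(u),\tilde{D}]$, hence bounded below; here the estimate $\|[D,u]\|<2$ is what keeps the correction bounded and the domains compatible. Kucerovsky's theorem then yields $\big[\big(\C,\overline{(u-1)X_B}^{\oplus 2},\Ci(u)\hat{+}\tilde{D}\big)\big]=\big[\big(\C,\overline{(u-1)A}_A,\Ci(u)\big)\big]\hat{\otimes}_A\big[\big(\calA,X_B,D\big)\big]$.

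\textbf{The main obstacle.} The genuine difficulty, and the reason the approximate unit $(v_n)$ is hypothesised, is that $\overline{(u-1)X_B}$ need not be a complemented submodule of $X_B$, so $\tilde{D}$ is not literally a restriction of $D$ but the strong limit $s$-$\lim v_n D|_{(u-1)X}$, which has to be shown self-adjoint and regular (Lemma~\ref{lem:self-adj}). Carrying this through the resolvent computation and through the connection and positivity conditions --- which are usually phrased for complemented internal tensor products --- is the technical heart of the argument. The hypothesis $\|[D,u]\|<2$ plays a secondary, stabilising role, guaranteeing that $\Ci(u)\hat{+}\tilde{D}$ is self-adjoint and regular on the natural domain $(u-1)\Dom(D)^{\oplus 2}$ without passing to a further closure, and that the bounded perturbation distinguishing $\Ci(u)\hat{+}\tilde{D}$ from the ``naive'' sum of $\Ci(u)\otimes 1$ and the lift of $D$ does not disturb these properties.
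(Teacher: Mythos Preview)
Your overall strategy---verify the Kasparov module axioms and then invoke Kucerovsky's criterion---is the same as the paper's, and the connection and domain conditions are handled correctly. However, there is a genuine gap in your treatment of the positivity condition.

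You write that the anticommutator is $2\Ci(u)^2$ plus a ``bounded self-adjoint correction $\pm i[\Ci(u),\tilde{D}]$'' and that the estimate $\|[D,u]\|<2$ ``keeps the correction bounded''. This is false: the operator $i[\Ci(u),\tilde{D}]$ is \emph{not} bounded for any value of $\|[D,u]\|$. A direct computation gives
\[
i[\Ci(u),\tilde{D}]=\pm\,2(u-1)^{-1}[u,D](u^*-1)^{-1},
\]
which carries the unbounded factors $(u-1)^{-1}$ and $(u^*-1)^{-1}$. The point is rather that $2\Ci(u)^2$ is \emph{also} unbounded and of the same shape, namely $2\Ci(u)^2=8(u-1)^{-1}(u^*-1)^{-1}-2$. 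Combining the two yields
\[
2\Ci(u)^2\pm i[\Ci(u),\tilde{D}]
=2(u-1)^{-1}\big(4\pm[u,D]u^*\big)(u^*-1)^{-1}-2,
\]
which is bounded below (by $-2$) exactly when the self-adjoint middle term $4\pm[u,D]u^*$ is positive, i.e.\ when $\|[D,u]\|\leq 4$. The hypothesis $\|[D,u]\|<2$ is thus the input to a positivity-of-a-sandwich argument, not a boundedness estimate on the correction. Your final paragraph, which describes $\|[D,u]\|<2$ as playing ``a secondary, stabilising role'' for self-adjointness and regularity, also misplaces its function: self-adjointness is entirely the content of Lemma~\ref{lem:self-adj} and uses only the approximate unit $(v_n)$, while $\|[D,u]\|<2$ enters solely through the positivity computation above.

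Two smaller points. Your route to regularity via Lance's Theorem~10.4 is underspecified: you would need to exhibit a bounded $F$ with $\Ci(u)\hat{+}\tilde{D}=F(1-F^2)^{-1/2}$, and none is in sight. The paper instead invokes the local-global principle of Kaad--Lesch and Pierrot, which reduces regularity to self-adjointness of $(\Ci(u)\hat{+}\tilde{D})\otimes 1$ on every localisation $X\otimes_B L^2(B,\phi)$; the proof of Lemma~\ref{lem:self-adj} transfers verbatim to these Hilbert spaces. For the compact resolvent, the paper does not go through the second resolvent identity but simply observes that the resolvent has range $(u-1)\Dom(D)^{\oplus 2}=(u-1)(i\pm D)^{-1}X_B^{\oplus 2}$, and the inclusion of this into $X_B^{\oplus 2}$ is compact because $(u-1)(i\pm D)^{-1}\in\End^0_B(X)$.
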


{\bf Remark} If $[D,u]$ is bounded we can ensure 
that $\Vert [D,u]\Vert< 2$ is satisfied by rescaling $D$.
\hfill $\diamond$

\begin{proof}
We employ the main result of \cite{Kuc1}.
First, if we make the  identification 
$\overline{(u-1)A}\ox_A X_B\cong \overline{(u-1)X_B}$ by left multiplication,
then the map
$$
X_B\ni x\mapsto (u-1)aD x-D|_{(u-1)X}(u-1)ax=-[D,(u-1)a]x
$$ 
is 
bounded. This proves that 
Kucerovsky's connection condition is satisfied \cite{Kuc1}.

Since $\Dom(\Ci(u))\subset\Dom(\Ci(u)\pm i\tilde{D})$, the 
domain condition is
satisfied, and we need only check Kucerovsky's positivity condition and
that we have a Kasparov module.

The assumption that $[\D,v_n](u^*-1)\to 0$ strongly 
and Lemma \ref{lem:self-adj} tells us that  $\Ci(u)\hat{+}\tilde{D}$
is self-adjoint. 
For regularity, let $\phi:\,B\to \C$ be a state, and form the Hilbert space
$X\ox_BL^2(B,\phi)$. The sequence $v_n\ox 1$ satisfies the same domain
mapping properties with respect to  $(\Ci(u)\hat{+}\tilde{D})\ox 1$ as
$v_n$ did for $\Ci(u)\hat{+}\tilde{D}$, and so the above arguments show that
$(\Ci(u)\hat{+}\tilde{D})\ox 1$ is self-adjoint. As $\phi$ was an arbitrary state, 
the local global-principle \cite{KaLe2,Pierrot} implies the regularity of 
$\Ci(u)\hat{+}\tilde{D}$.

To check the positivity condition we first compute the anti-commutator.
So
\begin{align*}
&\begin{pmatrix} 0 & \Ci(u)\\ \Ci(u) & 0\end{pmatrix}
\begin{pmatrix} 0 & \Ci(u)-i\tilde{D}\\ \Ci(u)+i\tilde{D} & 0\end{pmatrix}\\
&\qquad+
\begin{pmatrix} 0 & \Ci(u)-i\tilde{D}\\ \Ci(u)+i\tilde{D} & 0\end{pmatrix}
\begin{pmatrix} 0 & \Ci(u)\\ \Ci(u) & 0\end{pmatrix}\\
&=2\begin{pmatrix} \Ci(u)^2 & 0\\ 0 &\Ci(u)^2\end{pmatrix}+
\begin{pmatrix} i[\Ci(u),D] & 0\\ 0 & -i[\Ci(u),D]\end{pmatrix}\\
&=2\begin{pmatrix} 4(u-1)^{-1}(u^*-1)^{-1}-1 & 0\\ 0 & 4(u-1)^{-1}(u^*-1)^{-1}-1\end{pmatrix} \\
&\qquad +
\begin{pmatrix} 2(u-1)^{-1}[u,D](u^*-1)^{-1} & 0\\ 0 & -2(u-1)^{-1}[u,D](u^*-1)^{-1}\end{pmatrix}\\
&=2\begin{pmatrix} (u-1)^{-1}\big(4-[u,D]u^*\big)(u^*-1)^{-1}-1 & 0\\ 0 & 
(u-1)^{-1}\big(4+[u,D]u^*\big)(u^*-1)^{-1}-1\end{pmatrix}
\end{align*}
Recasting this computation in terms of quadratic forms shows that
the required positivity holds when $[D,u]u^*\leq 4$, which 
is satisfied since we assume
$\Vert[D,u]\Vert<2$. 

Because the left-action is by the complex numbers, commutators 
of the left action with $\Ci(u)\hat{+}\tilde{D}$ are
trivially bounded. Thus all that remains is  to check the 
compact resolvent condition.
The self-adjointness of $\Ci(u)\hat{+}\tilde{\D}$ on $(u-1)\Dom(\D)$
tells us that 
$$
(i\pm \Ci(u)\hat{+}\tilde{\D})^{-1}\ol{(u-1)X_B}^{\oplus 2}=(u-1)\Dom(\D)^{\oplus 2}=(u-1)(i\pm \D)^{-1}X_B^{\oplus 2}
\hookrightarrow X_B^{\oplus 2}
$$
where the last inclusion is compact since $u-1\in \A$ and 
$\D$ has locally compact resolvent.
\end{proof}

\begin{example}
\label{eg:zed}
For $z\in S^1$ we let $\rho(z) =2-z-\overline{z}$ and 
define $v_n=\rho(\rho+1/n)^{-1}$.
Elementary trigonometry shows that
$$
\Big[\frac{1}{i}\frac{d}{d\theta},v_n\Big](1-\overline{z})
=2(1-v_n)\Big(\frac{\sin(\theta)}{1-\cos(\theta)+1/2n}-\cot(\theta/2)\Big)e^{i\theta/2}\sin(\theta/2)+2(1-v_n)e^{i\theta/2}\cos(\theta/2)
$$
which does indeed go to zero strongly on $L^2(S^1)=\overline{(z-1)L^2(S^1)}$.
\hfill $\diamond$
\end{example}


\end{document}